\definecolor{mypurple}{RGB}{0,0,0}
\definecolor{myblue}{RGB}{0,87,120}
\definecolor{aqua}{RGB}{0,0,0}
\definecolor{myorange}{RGB}{0,0,0}
\definecolor{mygray}{RGB}{255,255,255}
\titleformat{\section}[hang]{\large\center\scshape}{\thesection.}{1em}{}
\titleformat{\subsection}[hang]{\large}{\thesubsection.}{1em}{}
\titleformat{\subsubsection}[hang]{}{\thesubsubsection.}{1em}{}
\newtheoremstyle{mytheoremstyle} 
    {0.3cm}                      
    {0cm}                        
    {\itshape}                   
    {}                           
    {\scshape}                   
    {: }                          
    {0em}                       
    {}  
\theoremstyle{mytheoremstyle}
\newtheorem{Theorem}{Theorem}
\newtheorem{Lemma}{Lemma}
\newtheorem*{Lemma*}{Lemma}
\newtheorem{Corollary}{Corollary}
\newtheorem{Proposition}{Proposition}
\renewenvironment{proof}{{\noindent \sc Proof:}}{\qed}
\newtheoremstyle{myExampleRemarkstyle} 
    {0.3cm}                    
    {0cm}                           
    {\itshape}                   
    {}                           
    {\scshape}                   
    {: }                          
    {0em}                       
    {}  
\theoremstyle{myExampleRemarkstyle}
\newtheorem{Assumption}{Assumption}
\renewcommand{\theAssumption}{\Alph{Assumption}}
\providecommand{\customgenericname}{}
\newcommand{\newcustomtheorem}[2]{%
  \newenvironment{#1}[1]
  {%
   \renewcommand\customgenericname{#2}%
   \renewcommand\theinnercustomgeneric{##1}%
   \innercustomgeneric
  }
  {\endinnercustomgeneric}
}
\newtheoremstyle{simuStyle}
{0.3cm} 
{0cm} 
{} 
{} 
{\bfseries} 
{.} 
{0em} 
{} 
\theoremstyle{simuStyle}
\newtheoremstyle{stratStyle}
{0.3cm} 
{0cm} 
{} 
{} 
{\scshape} 
{: } 
{0em} 
{} 
\theoremstyle{stratStyle}
\DeclareSymbolFont{lettersA}{U}{txmia}{m}{it}
\DeclareMathSymbol{\real}{\mathord}{lettersA}{"92}
\DeclareMathSymbol{\field}{\mathord}{lettersA}{"83}
\def\real{{\rm I\!R}}
\def\tt{^{\rm T}}
\DeclareMathOperator*{\tr}{tr}
\DeclareMathOperator*{\Int}{Int}
\DeclareMathOperator*{\cov}{cov}
\def\0{{\bf 0}}
\def\btheta{{\bm{\theta}}}
\def\bpi{{\bm{\pi}}}
\def\x{{\bf x}}
\def\B{{\bf B}}
\DeclareMathOperator*{\var}{var}
\DeclareMathOperator*{\argzero}{argzero}
\def\bt{\bm{\theta}}
\def\bT{\bm{\Theta}}
\def\boxit#1{\vbox{\hrule\hbox{\vrule\kern3pt
          \vbox{\kern3pt#1\kern3pt}\kern3pt\vrule}\hrule}}
\definecolor{pinegreen}{rgb}{0.0, 0.47, 0.44}
\title{A General Approach for Simulation-based Bias Correction in High Dimensional Settings}
\author{St\'ephane Guerrier \\\
        Geneva School of Economics and Management and Faculty of Science\\
        University of Geneva\\
        \And
        Mucyo Karemera \\\\
        Geneva School of Economics and Management\\
        University of Geneva\\
        \And
        Samuel Orso \\\\
        Geneva School of Economics and Management\\
        University of Geneva\\
        \And
        Maria-Pia Victoria-Feser
        \\\\
        Geneva School of Economics and Management\\
        University of Geneva\\
        \And
        Yuming Zhang\\\\
        Geneva School of Economics and Management\\
        University of Geneva
}
\begin{document}


\let\refBKP\ref
\renewcommand{\ref}[1]{{\upshape\refBKP{#1}}}

    
\maketitle
 
%

\begin{abstract}
An important challenge in statistical analysis lies in controlling the bias of estimators due to the ever-increasing data size and model complexity. Approximate numerical methods and data features like censoring and misclassification often result in analytical and/or computational challenges when implementing standard estimators. As a consequence, consistent estimators may be difficult to obtain, especially in complex and/or high dimensional settings. In this paper, we study the properties of a general simulation-based estimation framework that allows to construct bias corrected consistent estimators. We show that the considered approach leads, under more general conditions, to stronger bias correction properties compared to alternative methods. Besides its bias correction advantages, the considered method can be used as a simple strategy to construct consistent estimators in settings where alternative methods may be challenging to apply. Moreover, the considered framework can be easily implemented and is computationally efficient. These theoretical results are highlighted with simulation studies of various commonly used models, including the negative binomial regression (with and without censoring) and the logistic regression (with and without misclassification errors). Additional numerical illustrations are provided in the supplementary materials.

\vspace{0.2cm}
\textbf{Keywords} --- Iterative bootstrap, Indirect inference, Generalized linear models, Censored regression models, Misclassification errors.

\end{abstract}%

\newpage


\section{Introduction}
\label{sec:intro}

Point estimates resulting from parametric estimation are widely used in subsequent analyses, in particular for inference based on asymptotic theory or in simulation methods like the bootstrap \citep{Efro:79}. They are also used to compute, for example, the sample sizes in planning experiments, prediction errors in model assessment, and mean squared error in extreme value estimation. However, classical (consistent) estimators like the Maximum Likelihood Estimator (MLE) often suffer from severe finite sample bias, especially in high dimensional settings where the number of parameters $p$ is relatively large compared to the sample size $n$ (see e.g. \citealp{sur2019modern}). Moreover, with the ever-increasing model complexity, 
approximate methods 
have been commonly used, which, however, often lead to biased or even inconsistent estimators. Furthermore, data often exhibit features such as truncation, censoring or misclassification, which can also lead to inconsistent estimators if these mechanisms are ignored in the estimation process. In this case, a common approach is to consider marginal likelihood functions, which lead to estimators that can possibly suffer from (significant) bias, in finite sample and/or asymptotically, especially in high dimensional settings. 

Different approaches have been proposed to remedy, often separately, these issues. For example, bias correction can be achieved by using analytical approximations to the likelihood or the score functions (see e.g. \citealp{BrDa:08, Kosm:14} and the references therein). While these approaches are often considerably numerically efficient as they do not rely on simulations, they are typically model dependent and have mainly been studied in low dimensional settings. On the other hand, simulation-based methods (see e.g. \citealp{Efro:79, gourieroux1993indirect, mackinnon1998approximate}) are naturally more computationally challenging,  they are however broadly applicable and model independent. Moreover, these methods enjoy from desirable bias correction properties in finite samples, and some of them additionally allow to achieve asymptotic bias correction. More details regarding the simulation-based methods are presented in Section \ref{sec:existing:methods}, where we illustrate that these methods are intrinsically connected.

Given their connections, some of these methods can be put into a general simulation-based estimation framework (applicable in high dimensional settings) that can be used to either correct the bias of consistent estimators, or construct consistent bias corrected estimators starting from inconsistent ones. In this paper, we study the properties of the resulting estimators. First, we show that these estimators provide better bias correction results compared to alternative methods. For example, under plausible conditions, they even achieve asymptotically optimal bias correction in the sense that they are unbiased for sufficiently large, but finite, $n$. While enjoying from these desirable properties, the considered estimators appear not to pay a considerable price in terms of variance and typically enjoy from a reduced Root Mean Square Error (RMSE) in finite samples. Second, while being generally applicable to a broad set of models, these estimators are simple to implement. Indeed, one can easily construct consistent estimators while avoiding the analytical and/or numerical challenges typically entailed in standard procedures. This is especially useful when the data show some features such as censoring or misclassification, and when no alternative (consistent) estimators are available. Third, the simulation-based estimators we consider can be computed through an algorithm which is shown to converge exponentially fast. Therefore, they can be obtained in a numerically efficient manner even in large and complex data settings. Lastly, our theoretical and numerical findings are developed in high dimensional settings, which broadens their applicability. 

The rest of the paper is organized as follows. In Section~\ref{sec:existing:methods}, we discuss various existing simulation-based bias correction methods as well as their relations and equivalence. Based on this observation, we consider a general simulation-based framework that provides consistent and bias corrected estimators based on possibly inconsistent ones. In Section~\ref{sec:bias}, we investigate the bias correction properties of the resulting estimators based on both consistent and inconsistent estimators. We study their asymptotic and computational properties in Sections~\ref{sec:consist:asympnorm} and \ref{sec:computation}, respectively. Moreover, in Sections~\ref{sec:app:consist}~and~\ref{sec:app:inconsist} (and in the supplementary materials), we apply our approach to obtain consistent and bias corrected estimators for various commonly used models in high dimensional settings, based on consistent and inconsistent ones (when the data show censoring or misclassification), respectively. We find that the theoretical results are in line with the simulation studies, highlighting the advantages and the wide applicability of the general framework we study.

\section{Simulation-based Bias Correction Methods and their Connections}
\label{sec:existing:methods}

Various methods have been proposed to correct finite sample and asymptotic biases, often separately. While preventive methods have been proposed (see e.g. \citealp{Firt:93}), a widely used corrective approach is to consider an initial  estimator which is readily available but typically biased, and define a bias corrected estimator as a (possibly implicit) function of the initial one \citep[see e.g.][and the references therein]{kosmidis2014bias}. Specifically, suppose that we observe a random sample of size $n$ generated under the true model $F_{\bt_0}$ (possibly conditional on a set of fixed covariates), where $\bt_0\in\bm\Theta\subset\real^p$ is the parameter vector we wish to estimate. We denote the initial estimator as $\widehat{\bpi} \vcentcolon= \widehat{\bpi}(\bt_0, n) \in \bT$. We assume that $\bpi(\bt, n) \vcentcolon = \mathbb{E}[\widehat{\bpi}(\bt, n)]$ exists, where $\mathbb{E}[\cdot]$ denotes the expectation under $F_{\bt}$. We define the (generally unknown) bias function as $\mathbf{d}(\bt, n) \vcentcolon = {\bm{\pi}}(\bt, n) - \bt$, which might also include the asymptotic bias when the initial estimator $\widehat{\bpi}$ is inconsistent.  Then, the resulting corrected estimator $\widehat{\bt}_{C}$ can be expressed as $\widehat{\bt}_{C} \vcentcolon= \mathbf{g}(\widehat{\bpi})$,
where the function $\mathbf{g}(\bt)$ varies depending on the approaches. For example, in the simple situation where $\widehat{\bpi}$ is consistent and the bias function $\mathbf{d}(\bt, n)$ is known, a natural choice is $\mathbf{g}(\widehat{\bpi}) = \widehat{\bpi} - \mathbf{d}(\widehat{\bpi}, n)$. When $\mathbf{d}(\bt, n)$ is unknown, approximations could be used instead, such as an asymptotic analytical approximation of $\mathbf{d}(\bt, n)$ (see e.g. \citealp{cox1968general}). Similarly, simulation-based approximation of $\mathbf{d}(\bt, n)$, say $\mathbf{d}^*(\bt, n)$, can also be used. For example, considering $\mathbf{d}^*(\bt, n) \vcentcolon = \bpi^*(\bt,n) - \bt$, the (parametric) Bootstrap Bias Corrected (BBC) estimator (see e.g. \citealp{efron1994introduction}) is given by
\begin{equation}
    \widetilde{\bt}\vcentcolon = \widehat{\bpi} - \mathbf{d}^*(\widehat{\bpi}, n) = 2\widehat{\bpi} - {\bm{\pi}}^*(\widehat{\bpi}, n),
    \label{eqn:def:BBC}
\end{equation}
where
\begin{equation}
 \bpi^*(\bt,n) \vcentcolon = \frac{1}{H} \sum_{h=1}^H \widehat{\bpi}_h(\bt, n),
    \label{eqn:pi-star}
\end{equation}
with $\widehat{\bpi}_h(\bt,n)$ denoting the value of the initial estimator computed on the $h^{th}$ simulated sample under $F_{\bt}$. Alternatively, and still considering $\widehat{\bpi}$ consistent, \cite{mackinnon1998approximate} put forward the Nonlinear Bias Correcting (NBC) estimator (see also \citealp{gourieroux2000} and \citealp{smith1997fractional}), which is defined as
\begin{equation}
    \widehat{\bt}_{\mbox{\tiny\text{NBC}}} \vcentcolon = \argzero_{\bt \in \bT}\; \bt - \widehat{\bpi} + \mathbf{d}(\bt, n). 
     \label{eqn:def:NBC-theoretical}
\end{equation}
This implicit bias correction method has been used, for example, in \cite{godwin2019analytic} where an analytical approximation of $\mathbf{d}(\bt, n)$ is considered to compute the MLE of a generalized Pareto distribution. Similarly, $\mathbf{d}(\bt, n)$ can also be approximated by means of simulations, leading to 
\begin{equation}
    \widehat{\bt}_{\mbox{\tiny\text{NBC*}}} \vcentcolon = \argzero_{\bt \in \bT}\; \bt - \widehat{\bpi} + \mathbf{d}^*(\bt, n). 
    \label{eqn:def:NBC}
\end{equation}
Interestingly, this simulation-based counterpart of the NBC estimator is closely related to the BBC and other plug-in estimators, as $\widehat{\bt}_{\mbox{\tiny\text{NBC*}}}$ satisfies $\widehat{\bt}_{\mbox{\tiny\text{NBC*}}} = \widehat{\bpi} - \mathbf{d}^*(\widehat{\bt}_{\mbox{\tiny\text{NBC*}}}, n)$, and therefore, it corresponds to the BBC estimator but with the arguably better plug-in value $\widehat{\bt}_{\mbox{\tiny\text{NBC*}}}$.

When the initial estimator $\widehat{\bpi}$ is inconsistent but the bias function $\mathbf{d}(\bt,n)$ (or $\bpi(\bt,n)$ equivalently) is known, a natural strategy is to consider an indirect approach to construct a consistent Classical Minimum Distance (CMD) estimator (see e.g. \citealp{newey1994large}), which, under plausible conditions, is defined as
\begin{equation}
    \widehat{\bt}_{\mbox{\tiny\text{CMD}}}  \vcentcolon = \argzero_{\bt \in \bT}\; \widehat{\bpi} - \bpi(\bt, n).
    \label{eqn:def:CMD}
\end{equation}
Similarly, when $\mathbf{d}(\bt,n)$ is unknown, a simulation-based counterpart of $\widehat{\bt}_{\mbox{\tiny\text{CMD}}}$ can be obtained by replacing $\bpi(\bt, n)$ with $\bpi^*(\bt, n)$ defined in \eqref{eqn:pi-star}, leading to
\begin{equation}
    \widehat{\bt} \vcentcolon = \argzero_{\bt \in \bT}\; \widehat{\bpi} - \bpi^*(\bt, n).
    \label{eqn:def:JINI}
\end{equation}
This estimator is a special case of an indirect inference estimator proposed in \cite{gourieroux1993indirect} and \cite{smith1993estimating} (see also \citealp{mcfadden1989method, gallant1996moments}), which we call the Just Identified iNdirect Inference (JINI) estimator. Interestingly, the simulation-based NBC estimator $\widehat{\bt}_{\mbox{\tiny\text{NBC*}}}$ in \eqref{eqn:def:NBC} is equivalent to the JINI  estimator as 
\begin{equation*}
    \widehat{\bt}  \vcentcolon= \argzero_{\bt \in \bT}\; \widehat{\bpi} - \bpi^*(\bt, n) = \argzero_{\bt \in \bT}\; \widehat{\bpi} - \bt -\mathbf{d}^*(\bt,n) =\vcentcolon \widehat{\bt}_{\mbox{\tiny\text{NBC*}}}. 
\end{equation*}
Naturally, this equivalence also holds between the CMD estimator in \eqref{eqn:def:CMD} and the NBC estimator in \eqref{eqn:def:NBC-theoretical} when $\mathbf{d}(\bt,n)$ is known.  \cite{kuk1995asymptotically} independently produced the same idea to correct asymptotic bias when estimating the Generalized Linear Mixed Model (GLMM) with approximate estimating equations. He proposed to iterate the BBC estimator by updating the plug-in value as follows:
\begin{equation}
    \bt^{(k+1)}  = \bt^{(k)} + \left\{\widehat{\bpi} - \bpi^\ast\left(\bt^{(k)},n\right)\right\},
\label{eqn:IB}
\end{equation}
with $\bt^{(0)} \vcentcolon = \widehat{\bpi}$. With one step of \eqref{eqn:IB}, the BBC estimator is obtained and therefore, such approach is named the Iterative Bootstrap (IB), which is different from the (more computationally intensive) iterated bootstrap \citep[see e.g.][and the references therein]{Hall:92,ChHa:15}. Under plausible conditions (discussed in Section \ref{sec:computation}), this sequence converges and its limit $\widehat{\bt}_{\mbox{\tiny\text{IB}}}\vcentcolon =\underset{k \to \infty}{\lim}\bt^{(k)}$ is actually the fixed point of
\begin{equation}
    \bm{T}\left(\bt, n\right)\vcentcolon = \bt + \left\{\widehat{\bpi} - \bpi^\ast\left(\bt,n\right)\right\}.
    \label{eqn:def:function:T}
\end{equation}
Hence, it is equivalent to the JINI estimator as 
\begin{equation*}
    \widehat{\bpi} = \bpi^*(\widehat{\bt}_{\mbox{\tiny\text{IB}}},n) \Longleftrightarrow \widehat{\bt}_{\mbox{\tiny\text{IB}}} = \argzero_{\bt \in \bT}\; \widehat{\bpi} - \bpi^*(\bt, n) =\vcentcolon \widehat{\bt},
\end{equation*}
and therefore,
\begin{equation}
\label{eqn:equivalence}
    \widehat{\bt} = \widehat{\bt}_{\mbox{\tiny\text{IB}}}  =  \widehat{\bt}_{\mbox{\tiny\text{NBC*}}}.
\end{equation}
%
%
Some of these connections have been preliminarily studied in \cite{guerrier2018simulation}, which showed the equivalence of the IB and the JINI estimators when considering consistent initial estimators in low dimensional settings. Under plausible conditions (discussed in Section~\ref{sec:computation}), the relation in \eqref{eqn:equivalence} holds and suggests that these estimators can be put into a computationally efficient unifying framework to construct consistent bias corrected estimators starting from a possibly inconsistent initial estimator $\widehat{\bpi}$. Indeed, the properties of $\widehat{\bt}$ (an indirect inference estimator) are well established as a method for constructing consistent estimators starting from inconsistent ones. On the other hand, the properties of $\widehat{\bt}_{\mbox{\tiny\text{NBC*}}}$, preliminarily studied in \cite{mackinnon1998approximate}, suggest that it can potentially achieve advantageous bias correction performance, while the IB approach is easy to implement and  appears to be computationally efficient. The  properties of the JINI estimator (bias correction, consistency,  asymptotic normality and computational efficiency) are formally studied in high dimensional settings in the remaining sections.

In addition, assuming $\bpi(\bt, n)$ is bijective in $\bt$, so that its inverse, which we denote as $\bpi^{-1}_n(\bt)$, exists, both estimators $\widehat{\bt}_{\mbox{\tiny\text{CMD}}}$ and $\widehat{\bt}_{\mbox{\tiny\text{NBC}}}$ can be written 
as $\widehat{\bt}_{\mbox{\tiny\text{CMD}}} = \widehat{\bt}_{\mbox{\tiny\text{NBC}}} = \mathbf{g} \left(\widehat{\bpi}\right) = \bpi^{-1}_n \left( \widehat{\bpi} \right)$.
Interestingly, both $\widehat{\bt}_{\mbox{\tiny\text{CMD}}}$ and $\widehat{\bt}_{\mbox{\tiny\text{NBC}}}$ possess the property that 
\begin{equation}
\label{eqn:target:unbias}
    \mathbf{g} \left(\mathbb{E}[\widehat{\bpi}]\right) = \bpi^{-1}_n \left( \mathbb{E}[\widehat{\bpi}]\right) = \bt_0,
\end{equation}
and therefore, when $\bpi^{-1}_n(\bt)$ is continuous, it implies that the realizations of $\widehat{\bpi}$ ``close'' to its expected value would lead to $\mathbf{g} \left(\widehat{\bpi}\right) \approx \bt_0$.  Naturally, this property is different from unbiasedness (i.e. $\mathbb{E}[\mathbf{g} \left(\widehat{\bpi}\right) ] =\bt_0$) and they do not imply each other as $\mathbb{E}[\mathbf{g} \left(\widehat{\bpi}\right) ] \neq \mathbf{g} \left(\mathbb{E}[\widehat{\bpi}]\right)$. When the bias function is linear, however, the two properties are equivalent, suggesting that the resulting estimator may enjoy advantageous bias correction properties, especially when the bias function is (approximately or locally) linear. 
Moreover, the simulation error can be made arbitrarily small by considering a sufficiently large number of simulated samples $H$. Therefore, the difference between $\widehat{\bt}$ and $\widehat{\bt}_{\mbox{\tiny\text{CMD}}}$ can be negligible, and under appropriate conditions, simulation-based estimators essentially possess the same properties as their analytical counterparts. 
On the other hand, the BBC estimator $\widetilde{\bt}$ (or other estimators based on explicit plug-in) does not satisfy \eqref{eqn:target:unbias} in general. Indeed, informally we have 
\begin{equation*}
    \mathbb{E}[\widehat{\bpi}] - \mathbf{d}^*\left(\mathbb{E}[\widehat{\bpi}], n\right) \underset{\text{as} \; H \to \infty}{\longrightarrow} \mathbb{E}[\widehat{\bpi}] - \mathbf{d}\left(\mathbb{E}[\widehat{\bpi}],n\right) \neq \mathbb{E}[\widehat{\bpi}] - \mathbf{d}\left(\bt_0,n\right) = \bt_0.
\end{equation*}
Therefore, advantageous bias correction properties are expected for the JINI estimator $\widehat{\bt}$ (and equivalently $\widehat{\bt}_{\mbox{\tiny\text{NBC*}}}$ and $\widehat{\bt}_{\mbox{\tiny\text{IB}}}$), possibly better than alternative methods like the BBC estimator.

\section{Bias Correction Properties}
\label{sec:bias}

As discussed in Section \ref{sec:existing:methods}, we recall that $\widehat{\bpi}(\bt_0, n)$ (or simply $\widehat{\bpi}$) is the initial estimator of $\bt_0$ computed on the observed sample, and that $\bpi(\bt_0, n) = \mathbb{E}[\widehat{\bpi}(\bt_0, n)]$. Moreover, we consider the decomposition of a generic bias function $\mathbf{d}(\bt, n) = \mathbf{a}(\bt) + \mathbf{b}(\bt, n).$ Denoting $a_i$ as the $i^{th}$ entry of a generic vector $\mathbf{a} \in \real^p$, the vector $\mathbf{a}(\bt)\in\real^p$ denotes the asymptotic bias in the sense that $a_{i}(\bt)\vcentcolon = \underset{n \to \infty}{\lim}\, d_{i}(\bt, n)$, for all $i=1,\ldots p$, and $\mathbf{b}(\bt, n)$ denotes the finite sample bias given by $\mathbf{b}(\bt, n) \vcentcolon = \mathbf{d}(\bt, n) - \mathbf{a}(\bt)$. We also denote $\bpi(\bt) \vcentcolon = \bt + \mathbf{a}(\bt)$, then we can write 
\begin{equation}
\label{eqn:bias-decomp}
\widehat{\bpi}(\bt,n) = \underbrace{\overunderbraces{&&\br{2}{\mathbf{d}(\bt,n)}}%
{&\bt + &\mathbf{a}(\bt)& + \mathbf{b}(\bt,n)}%
{&\br{2}{\bpi(\bt)}&}}_{\bpi(\bt,n)}  + \mathbf{v}(\bt,n),
\end{equation} 
where $\mathbf{v}(\bt, n) \vcentcolon = \widehat{\bpi}(\bt,n) - \bpi(\bt,n)$ is a zero mean random vector.  Our assumption framework discussed later in this section implies that $\widehat{\bpi}(\bt,n)$ converges (in probability) to $\bpi(\bt)$. Therefore, the decomposition given in \eqref{eqn:bias-decomp} allows to study the bias properties of the JINI estimator defined in \eqref{eqn:def:JINI}, when $\widehat{\bpi}(\bt_0, n)$ is a consistent estimator of $\bt_0$ (i.e. $\mathbf{a}(\bt_0)= \0$, see Section \ref{sec:bias:consist}), and when $\widehat{\bpi}(\bt_0, n)$ is inconsistent (i.e. $\mathbf{a}(\bt_0)\neq \0$, see Section \ref{sec:bias:inconsist}). 


\subsection{Consistent Initial Estimators}
\label{sec:bias:consist}

In this section, we analyze the bias correction properties of the JINI estimator when the initial estimator $\widehat{\bpi}(\bt_0,n)$ is consistent. The results are based on the following assumptions.

\setcounter{Assumption}{0}
\renewcommand{\theHAssumption}{otherAssumption\theAssumption}
\renewcommand\theAssumption{\Alph{Assumption}}
\begin{Assumption}
\label{assum:A}
The parameter space $\bm\Theta$ is a compact convex subset of $\real^p$ and $\bt_0\in\Int(\bm\Theta)$.
\end{Assumption}

\setcounter{Assumption}{1}
\renewcommand\theAssumption{\Alph{Assumption}}
\begin{Assumption}
\label{assum:B}
The variance of $\mathbf{v} \left(\bt,n \right)$ exists and is finite for any $\bt \in \bm\Theta$. Moreover, there exists some $\alpha >0$ such that $\lVert\mathbf{v} \left(\bt,n\right)\rVert_\infty = \mathcal{O}_{\rm p}(n^{-\alpha})$ and $p = o(n^{2\alpha})$.
\end{Assumption}
\vspace{0.3cm}

Assumption \ref{assum:A} is mild and is typically used in most settings where estimators have no closed-form solutions. Indeed, the compactness of $\bT$ and $\bt_0 \in \Int(\bm\Theta)$ are common regularity conditions for the consistency and asymptotic normality of  standard estimators such as the MLE, respectively (see e.g. \citealp{newey1994large}). The convexity condition ensures that expansions can be made between $\bt_0$ and an arbitrary point in $\bT$, and it can always be relaxed in low dimensions as well as in some high dimensional settings. Hence, Assumption~\ref{assum:A} may already be satisfied depending on the model settings and the requirements already brought in by taking $\widehat{\bpi}(\bt_0,n)$ to be consistent (and possibly asymptotic normally distributed). Assumption \ref{assum:B} imposes a (common) restriction on the variance of the initial estimator. It is also frequently employed and typically very mild. In the common situation where $\alpha = 1/2$, Assumption~\ref{assum:B} would require that $p/n \to 0$.

\setcounter{Assumption}{2}
\renewcommand\theAssumption{\Alph{Assumption}}
\begin{Assumption}
\label{assum:C}
The finite sample bias function $\mathbf{b}(\bt, n)$ is once continuously differentiable in $\bt \in \bm\Theta$. Let $\mathbf{B}(\bt,n) \vcentcolon = \partial \mathbf{b}(\bt, n) / \partial \bt\tt$, there exists some $\beta > \alpha$, with $\alpha$ stated in Assumption \ref{assum:B}, such that $\|\mathbf{b}(\bt, n)\|_\infty = \mathcal{O}(n^{-\beta})$ and $\|\mathbf{B}(\bt, n)\|_\infty = \mathcal{O}(n^{-\beta})$. 
\end{Assumption}
\vspace{0.3cm}

Assumption \ref{assum:C} is mainly used to ensure that the rate of the bias, as well as the rate of its derivative, is dominated by the standard error rate. The form of the finite sample bias $\mathbf{b}(\bt, n)$ is typically unknown, especially in complex models. In fact, depending on different specifications on the form of $\mathbf{b}(\bt, n)$, we can obtain different bias correction properties of the JINI estimator (and the BBC estimator). Therefore, we further specify Assumption \ref{assum:C} into the following three increasingly general versions.

\setcounter{Assumption}{2}
\renewcommand\theAssumption{\Alph{Assumption}.1}
\begin{Assumption}
\label{assum:C1}
The function $\mathbf{b}(\bt, n)$ is such that $\mathbf{b}(\bt,n)=\mathbf{B}(n)\bt + \mathbf{c}(n)$, where $\mathbf{B}(n) \in \real^{p \times p}$ and $\mathbf{c}(n) \in \real^p$. Moreover, there exist some $\beta_1$ and $\beta_2$ such that $\min(\beta_1, \beta_2) = \beta$ where $\beta$ is stated in Assumption \ref{assum:C}, $\|\mathbf{B}(n)\|_\infty = \mathcal{O}(n^{-\beta_1})$ and $\|\mathbf{c}(n)\|_\infty = \mathcal{O}(n^{-\beta_2})$.
\end{Assumption}
\setcounter{Assumption}{2}
\renewcommand\theAssumption{\Alph{Assumption}.2}
\begin{Assumption}
\label{assum:C2}
The function $\mathbf{b}(\bt, n)$ is such that $\mathbf{b}(\bt, n) = \mathbf{B}(n)\bt + \mathbf{c}(n) + \mathbf{r}(\bt, n)$, where $\mathbf{B}(n) \in \real^{p \times p}$, $\mathbf{c}(n) \in \real^p$ and $\mathbf{r}(\bt, n) \in \real^p$. There exist some $\beta_1$, $\beta_2$ and $\beta_3$ such that $\min(\beta_1, \beta_2, \beta_3) = \beta$ where $\beta$ is stated in Assumption \ref{assum:C}, $\|\mathbf{B}(n)\|_\infty = \mathcal{O}(n^{-\beta_1})$ and $\|\mathbf{c}(n)\|_\infty = \mathcal{O}(n^{-\beta_2})$. Moreover, there exist some $\beta_4 > \beta_3$ such that $\mathbf{r}(\bt, n)$ can be expressed as $\mathbf{r}(\bt, n) = \left(\sum_{j=1}^p \sum_{k=1}^p r_{ijk} \theta_j \theta_k n^{-\beta_3}\right)_{i=1,\ldots,p} + \mathbf{e}(\bt, n),$
where $\|\mathbf{e}(\bt, n)\|_\infty = \mathcal{O}(n^{-\beta_4})$ and $\underset{i=1,\ldots,p}{\max} \sum_{j=1}^p \sum_{k=1}^p |r_{ijk}| = \mathcal{O}(1)$.
\end{Assumption}
\setcounter{Assumption}{2}
\renewcommand\theAssumption{\Alph{Assumption}.3}
\begin{Assumption}
\label{assum:C3}
The function $\mathbf{b}(\bt, n)$ is such that $\mathbf{b}(\bt, n) = \mathbf{B}(n)\bt + \mathbf{c}(n) + \mathbf{r}(\bt, n)$, where $\mathbf{B}(n) \in \real^{p \times p}$, $\mathbf{c}(n) \in \real^p$ and $\mathbf{r}(\bt, n) \in \real^p$. There exist some $\beta_1$, $\beta_2$ and $\beta_3$ such that $\min(\beta_1, \beta_2, \beta_3) = \beta$ where $\beta$ is stated in Assumption \ref{assum:C}, $\|\mathbf{B}(n)\|_\infty = \mathcal{O}(n^{-\beta_1})$, $\|\mathbf{c}(n)\|_\infty = \mathcal{O}(n^{-\beta_2})$ and $\|\mathbf{r}(\bt, n)\|_\infty = \mathcal{O}(n^{-\beta_3})$.
\end{Assumption}
\vspace{0.3cm}

Assumptions \ref{assum:C1}, \ref{assum:C2} and \ref{assum:C3} 
essentially consider that the finite sample bias $\mathbf{b}(\bt, n)$ is the sum of a linear term $\mathbf{B}(n)\bt$, a non-linear term $\mathbf{r}(\bt, n)$ and a constant term $\mathbf{c}(n)$, with increasingly general restrictions imposed on the non-linear term $\mathbf{r}(\bt, n)$. Indeed, Assumption~\ref{assum:C1} applies to the cases where $\mathbf{b}(\bt,n)$ is linear, i.e. $\mathbf{r}(\bt, n) = \0$. While this assumption may appear restrictive, it is often an accurate (local) approximation \citep[see e.g.][]{mackinnon1998approximate, sur2019modern}. On the other hand, Assumption \ref{assum:C2} states a quadratic leading order form of $\mathbf{b}(\bt,n)$, which is more general
than in Assumption~\ref{assum:C1}. This assumption is usually satisfied when $\mathbf{b}(\bt,n)$ is a sufficiently smooth function of $\bt/n$. Like Assumption \ref{assum:C1}, Assumption \ref{assum:C2} is arguably not easy to verify but it often appears to be a suitable approximation of $\mathbf{b}(\bt,n)$. Indeed, in Supplementary Material~\ref{supp:bias}, we numerically illustrate that these assumptions are plausible in some practical situations. Similar approximations are also commonly used to assess the bias of simulation-based methods (see e.g. \citealp{tibshirani1993introduction}). Assumption~\ref{assum:C3} further relaxes the restriction imposed on the non-linear term $\mathbf{r}(\bt,n)$ and states that it is known only up to the order. Finally, Assumption \ref{assum:C} is our most general assumption on the finite sample bias. Indeed, it only assumes a certain degree of smoothness and a specific order for $\mathbf{b}(\bt,n)$, without further information on its decomposition. Therefore, these assumptions have the following implications: 
\begin{center}
    \ref{assum:C1} $\Rightarrow$ \ref{assum:C2} $\Rightarrow$ \ref{assum:C3} $\Rightarrow$ \ref{assum:C}.
\end{center}

Our assumption framework is for the most part mild and likely to be satisfied (at least approximately) in most practical situations. However, our conditions are not necessarily the weakest possible in theory and may be further relaxed, or transformed into equivalent conditions. For example, the infinite norm conditions in these assumptions can possibly be replaced with other suitable conditions (such as with pointwise orders), which in turn will alter the restriction on $p/n$. We however do not attempt to pursue the weakest possible conditions to avoid overly technical treatments in establishing the theoretical results.

In Theorem \ref{thm:bias:finite} below, we provide the bias orders of the JINI estimator under different bias conditions, and in Proposition \ref{prop:bias:finite}, we provide the bias orders for the BBC estimator. The proofs are provided in Appendices~\ref{app:proof:JINI:bias:finite} and \ref{app:proof:BBC:bias:finite}, respectively. 

\begin{Theorem}
\label{thm:bias:finite}
Under Assumptions \ref{assum:A}, \ref{assum:B} and
\begin{enumerate}
    \item Assumption \ref{assum:C1}, the JINI estimator $\widehat{\bt}$ satisfies $\left\|\mathbb{E}[\widehat{\bt}] - \bt_0\right\|_2 = 0,$ for sufficiently large (but finite) $n$. 
\item Assumption  \ref{assum:C2}, the JINI estimator $\widehat{\bt}$ satisfies $\left\|\mathbb{E}[\widehat{\bt}] - \bt_0\right\|_2 = \mathcal{O}\left(p^{1/2}n^{-\gamma_2}\right),$ where $\gamma_2 \vcentcolon = \max\left\{\min(2\alpha+\beta_3, 2\beta_3, \beta_4), \alpha+\beta \right\}.$
\item Assumption  \ref{assum:C3}, the JINI estimator $\widehat{\bt}$ satisfies $\left\|\mathbb{E}[\widehat{\bt}] - \bt_0\right\|_2 = \mathcal{O}\left(p^{1/2}n^{-\gamma_3}\right),$ where $\gamma_3 \vcentcolon = \max(\beta_3, \alpha + \beta). $
\item Assumption  \ref{assum:C}, the JINI estimator $\widehat{\bt}$ satisfies $\left\|\mathbb{E}[\widehat{\bt}] - \bt_0\right\|_2 = \mathcal{O}\left(p^{1/2}n^{-(\alpha+\beta)}\right).$
\end{enumerate}
Finally, we have $\gamma_2 \geq \gamma_3 \geq \alpha+\beta$.
\end{Theorem}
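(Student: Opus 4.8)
The plan is to analyze all four bias bounds through the defining fixed-point relation of the JINI estimator, $\widehat{\bpi} = \bpi^*(\widehat{\bt},n)$ from \eqref{eqn:def:JINI}, and then settle the exponent ordering by an elementary comparison. Writing $\bar{\mathbf{v}}^*(\bt,n) \vcentcolon= \bpi^*(\bt,n) - \bpi(\bt,n)$ for the simulation error and using $\bpi(\bt,n) = \bt + \mathbf{b}(\bt,n)$ in the consistent case (so $\mathbf{a} = \0$), the relation becomes $(\widehat{\bt}-\bt_0) + [\mathbf{b}(\widehat{\bt},n) - \mathbf{b}(\bt_0,n)] = \mathbf{v}(\bt_0,n) - \bar{\mathbf{v}}^*(\widehat{\bt},n)$, which is exactly the decomposition \eqref{eqn:bias-decomp} rearranged. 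First I would establish the rate $\|\widehat{\bt}-\bt_0\|_\infty = \mathcal{O}_{\rm p}(n^{-\alpha})$: since $\|\mathbf{B}(\bt,n)\|_\infty = \mathcal{O}(n^{-\beta})$ with $\beta > \alpha$, the map $\bpi(\cdot,n)$ is a near-identity with a Lipschitz local inverse on $\bm\Theta$ (Assumption~A), so $\widehat{\bt}-\bt_0$ inherits the order of $\mathbf{v}(\bt_0,n)$ from Assumption~B, the simulation error being controlled by taking $H$ large (as discussed around \eqref{eqn:target:unbias}, the gap between $\widehat{\bt}$ and its analytical counterpart $\bpi^{-1}_n(\widehat{\bpi})$ is negligible). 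Throughout, component-wise ($\infty$-norm) bounds on $\mathbb{E}[\widehat{\bt}]-\bt_0$ are converted to the $2$-norm via $\|\cdot\|_2 \le p^{1/2}\|\cdot\|_\infty$, producing the $p^{1/2}$ factor in Parts~2--4.

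For Part~1 (Assumption~C.1) the bias is exactly affine, $\bpi(\bt,n) = (\I+\mathbf{B}(n))\bt + \mathbf{c}(n)$, so for $n$ large enough that $\I+\mathbf{B}(n)$ is invertible the inverse $\bpi^{-1}_n$ is itself affine. Hence $\mathbb{E}[\bpi^{-1}_n(\widehat{\bpi})] = \bpi^{-1}_n(\mathbb{E}[\widehat{\bpi}]) = \bpi^{-1}_n(\bpi(\bt_0,n)) = \bt_0$ by linearity of expectation alone, giving $\|\mathbb{E}[\widehat{\bt}]-\bt_0\|_2 = 0$. The one delicate point is that $\widehat{\bt}$ differs from $\bpi^{-1}_n(\widehat{\bpi})$ by the randomly-evaluated term $\bar{\mathbf{v}}^*(\widehat{\bt},n)$; I would handle this by expanding $\bar{\mathbf{v}}^*(\widehat{\bt},n)$ about $\bt_0$, using that $\mathbb{E}[\bar{\mathbf{v}}^*(\bt_0,n)] = \0$ at the fixed evaluation point, that the affine structure removes every curvature contribution, and that the residual simulation-variance coupling vanishes as $H$ grows.

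For Parts~2--4 I would subtract the fixed-point relation, invert the near-identity Jacobian $\I+\mathbf{B}(n)$ (plus the deterministic linearization of the quadratic term under C.2), and take expectations. Two bounds are always available and give the common floor $\alpha+\beta$: a mean-value bound $\|\mathbf{b}(\widehat{\bt},n)-\mathbf{b}(\bt_0,n)\|_\infty \le \|\mathbf{B}(\cdot,n)\|_\infty\|\widehat{\bt}-\bt_0\|_\infty = \mathcal{O}(n^{-(\alpha+\beta)})$ proves Part~4 under Assumption~C, and since C.2 $\Rightarrow$ C.3 $\Rightarrow$ C this bound also supplies the $\alpha+\beta$ term in $\gamma_2$ and $\gamma_3$. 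Under C.3 the extra nonlinear part is bounded directly by $\|\mathbf{r}(\bt,n)\|_\infty = \mathcal{O}(n^{-\beta_3})$, yielding $\gamma_3 = \max(\beta_3,\alpha+\beta)$. Under C.2 I would use the quadratic form explicitly: taking expectations of $\mathbf{r}(\widehat{\bt},n)-\mathbf{r}(\bt_0,n)$ produces (i) a variance--curvature term $n^{-\beta_3}\mathrm{Cov}(\widehat{\bt}) = \mathcal{O}(n^{-(2\alpha+\beta_3)})$ coming from $\mathbb{E}[\delta_j\delta_k]$ with $\delta \vcentcolon= \widehat{\bt}-\bt_0$, (ii) a second-order remainder of the quadratic inversion of order $n^{-2\beta_3}$, and (iii) the residual $\|\mathbf{e}(\bt,n)\|_\infty = \mathcal{O}(n^{-\beta_4})$; the dominant of these is $n^{-\min(2\alpha+\beta_3,\,2\beta_3,\,\beta_4)}$, and keeping the better of this and the floor gives $\gamma_2 = \max\{\min(2\alpha+\beta_3,2\beta_3,\beta_4),\,\alpha+\beta\}$. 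I expect the careful bookkeeping of these second-order expectation terms under C.2, together with ensuring each component order survives the $p^{1/2}$ conversion under $p = o(n^{2\alpha})$, to be the main obstacle.

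Finally, the ordering $\gamma_2 \ge \gamma_3 \ge \alpha+\beta$ is elementary. Both $\gamma_2$ and $\gamma_3$ are maxima taken with $\alpha+\beta$, so both are at least $\alpha+\beta$, giving the second inequality. For the first, recall $\beta_3 \ge \beta > \alpha > 0$ and $\beta_4 > \beta_3$, whence $2\alpha+\beta_3 \ge \beta_3$, $2\beta_3 \ge \beta_3$ and $\beta_4 > \beta_3$, so $\min(2\alpha+\beta_3,2\beta_3,\beta_4) \ge \beta_3$; combined with $\gamma_2 \ge \alpha+\beta$ this yields $\gamma_2 \ge \max(\beta_3,\alpha+\beta) = \gamma_3$.
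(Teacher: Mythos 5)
Your skeleton for Parts 2--4 and the exponent ordering coincides with the paper's proof essentially step for step: the paper also starts from the fixed-point identity $\widehat{\bpi}(\bt_0,n)=\bpi^*(\widehat{\bt},n)$, takes expectations to obtain $\mathbb{E}[\widehat{\bt}]-\bt_0=\mathbf{b}(\bt_0,n)-\mathbb{E}[\mathbf{b}(\widehat{\bt},n)]$, establishes $\|\widehat{\bt}-\bt_0\|_\infty=\mathcal{O}_{\rm p}(n^{-\alpha})$, uses a mean-value bound for Part 4, inverts $\mathbf{I}+\mathbf{B}(n)$ via a Neumann-series lemma for Parts 2--3, and under C.2 refines $\mathbf{r}(\bt_0,n)-\mathbb{E}[\mathbf{r}(\widehat{\bt},n)]$ exactly along your lines, splitting $\theta_j\theta_k-\mathbb{E}[\widehat{\theta}_j\widehat{\theta}_k]$ into a covariance term of order $n^{-2\alpha}$ and a bias term of order $n^{-\beta_3}$, so that multiplication by the quadratic prefactor $n^{-\beta_3}$ and the $\mathbf{e}$-residual yield $\min(2\alpha+\beta_3,2\beta_3,\beta_4)$. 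One bookkeeping detail your sketch needs but only implies: the refinement under C.2 is a bootstrap --- one first proves the crude bound $\|\mathbb{E}[\widehat{\bt}]-\bt_0\|_\infty=\mathcal{O}(n^{-\beta_3})$ (the C.3 argument), and only then feeds it back into $\mathbb{E}[\widehat{\theta}_j]\mathbb{E}[\widehat{\theta}_k]$ to get the $2\beta_3$ exponent; your ``second-order remainder of the quadratic inversion'' is really $n^{-\beta_3}$ times this first-pass bias. Your max/min argument for $\gamma_2\geq\gamma_3\geq\alpha+\beta$ is a correct spelled-out version of what the paper asserts without detail.

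The one place you genuinely deviate, Part 1, is also where your sketch has a gap. The paper never leaves the expectation identity: under C.1 it reduces to $\{\mathbf{I}+\mathbf{B}(n)\}\{\mathbb{E}[\widehat{\bt}]-\bt_0\}=\0$, and nonsingularity of $\mathbf{I}+\mathbf{B}(n)$ for large $n$ gives exact zero at \emph{fixed} $H$ --- and the theorem is a fixed-$H$ statement (witness the $1+1/H$ variance inflation elsewhere in the paper). Your route through the affine inverse $\bpi^{-1}_n$ is algebraically equivalent, but the patch you propose for the randomly-evaluated simulation term does not work as stated: letting ``the residual simulation-variance coupling vanish as $H$ grows'' proves unbiasedness only in the $H\to\infty$ limit, i.e.\ a weaker statement; and the claim that ``the affine structure removes every curvature contribution'' conflates Assumption C.1, which constrains the mean bias $\mathbf{b}(\bt,n)$, with the random function $\mathbf{v}_h(\cdot,n)$, whose dependence on $\bt$ C.1 does not constrain, so a cross term of the form $\mathbb{E}[\D\mathbf{v}_h(\bt_0,n)(\widehat{\bt}-\bt_0)]$ does not cancel by linearity. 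To be fair, the paper handles this point by fiat: in taking expectations of \eqref{eq:appA:1} it treats $\mathbb{E}[H^{-1}\sum_{h=1}^H\mathbf{v}_h(\widehat{\bt},n)]$ as zero even though $\widehat{\bt}$ is correlated with the simulated noise. If you adopt that convention, your Part 1 closes immediately at fixed $H$ with no expansion needed; if you do not, your expansion leaves an uncancelled term and the $H\to\infty$ appeal changes the theorem rather than proving it. The same remark applies, more mildly, to your rate derivation: the bound \eqref{eq:appA:9} holds at fixed $H$ because each $\mathbf{v}_h$ is already $\mathcal{O}_{\rm p}(n^{-\alpha})$ under Assumption \ref{assum:B}, so no appeal to large $H$ is needed there either.
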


\begin{Proposition}
\label{prop:bias:finite}
Under Assumptions \ref{assum:A}, \ref{assum:B}, and \begin{enumerate}
    \item Assumption \ref{assum:C1}, the BBC estimator $\widetilde{\bt}$ satisfies $\left\|\mathbb{E}[\widetilde{\bt}] - \bt_0\right\|_2 = \mathcal{O}\left(p^{1/2}n^{-\upsilon_1}\right),$ where $\upsilon_1 \vcentcolon = \beta + \beta_1.$
\item Assumption \ref{assum:C2}, the BBC estimator $\widetilde{\bt}$ satisfies $\left\|\mathbb{E}[\widetilde{\bt}] - \bt_0\right\|_2 = \mathcal{O}\left(p^{1/2}n^{-\upsilon_2}\right),$ 
where $\upsilon_2 \vcentcolon = \max\left\{\min(\beta+\beta_1, \beta+\beta_3, 2\alpha+\beta_3, \beta_4), \alpha+\beta \right\}$.
\item Assumption \ref{assum:C3}, the BBC estimator $\widetilde{\bt}$ satisfies $\left\|\mathbb{E}[\widetilde{\bt}] - \bt_0\right\|_2 = \mathcal{O}\left(p^{1/2}n^{-\upsilon_3}\right),$ where $\upsilon_3 \vcentcolon = \max\left\{\min(\beta+\beta_1, \beta_3), \alpha+\beta\right\}.$
\item Assumption \ref{assum:C}, the BBC estimator $\widetilde{\bt}$ satisfies $\left\|\mathbb{E}[\widetilde{\bt}] - \bt_0\right\|_2 = \mathcal{O}\left(p^{1/2}n^{-(\alpha+\beta)}\right). $
\end{enumerate}
Finally, we have $\upsilon_1 \geq \upsilon_2 \geq \upsilon_3 \geq \alpha+\beta$.
\end{Proposition}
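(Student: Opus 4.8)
The plan is to analyze the BBC estimator $\widetilde{\bt} = 2\widehat{\bpi} - \bpi^*(\widehat{\bpi}, n)$ directly through the decomposition \eqref{eqn:bias-decomp}. Since the initial estimator is consistent we have $\mathbf{a} \equiv \0$, so $\bpi(\bt, n) = \bt + \mathbf{b}(\bt, n)$ and $\bpi^*(\widehat{\bpi}, n) = \widehat{\bpi} + \mathbf{b}(\widehat{\bpi}, n) + \mathbf{v}^*(\widehat{\bpi}, n)$, where $\mathbf{v}^*(\bt, n) = \bpi^*(\bt, n) - \bpi(\bt, n)$ is the average of $H$ independent zero-mean copies of $\mathbf{v}(\bt, n)$. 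Substituting this into the definition and using $\widehat{\bpi} - \bt_0 = \mathbf{b}(\bt_0, n) + \mathbf{v}(\bt_0, n)$ gives the exact identity
\begin{equation*}
\widetilde{\bt} - \bt_0 = \left[\mathbf{b}(\bt_0, n) - \mathbf{b}(\widehat{\bpi}, n)\right] + \mathbf{v}(\bt_0, n) - \mathbf{v}^*(\widehat{\bpi}, n).
\end{equation*}
Taking expectations, the zero-mean term $\mathbf{v}(\bt_0, n)$ vanishes and, as noted in Section \ref{sec:existing:methods}, the simulation term $\mathbb{E}[\mathbf{v}^*(\widehat{\bpi}, n)]$ can be made negligible by taking $H$ large; the whole bias is therefore governed by $\mathbb{E}[\mathbf{b}(\bt_0, n) - \mathbf{b}(\widehat{\bpi}, n)]$. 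The decisive structural point — and the source of the contrast with Theorem \ref{thm:bias:finite} — is that here $\mathbf{b}$ is evaluated at the \emph{uncorrected} $\widehat{\bpi}$, which by Assumption \ref{assum:B} and $\beta>\alpha$ lies at distance $\|\widehat{\bpi} - \bt_0\|_2 = \Op(p^{1/2}n^{-\alpha})$ from $\bt_0$, so the linear part of $\mathbf{b}$ no longer cancels in expectation.

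First I would record the preliminary rate $\|\widehat{\bpi} - \bt_0\|_2 = \Op(p^{1/2}n^{-\alpha})$ (Assumption \ref{assum:B}, with the systematic part $\mathbf{b}(\bt_0,n) = \O(n^{-\beta})$ subdominant since $\beta > \alpha$), and I would convert the $\|\cdot\|_\infty$ bounds of Assumptions \ref{assum:C}--\ref{assum:C3} into the $\|\cdot\|_2$ conclusion, which is exactly what produces the $p^{1/2}$ factor throughout. The four cases then differ only in how $\mathbf{b}(\bt_0, n) - \mathbf{b}(\widehat{\bpi}, n)$ is expanded. Under Assumption \ref{assum:C} I would bound it by the mean value theorem, $\|\mathbf{b}(\bt_0, n) - \mathbf{b}(\widehat{\bpi}, n)\|_\infty \le \sup_{\bt}\|\mathbf{B}(\bt, n)\|_\infty\,\|\widehat{\bpi} - \bt_0\|_\infty$, giving $\O(p^{1/2}n^{-(\alpha+\beta)})$. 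Under Assumption \ref{assum:C1} the difference equals $-\mathbf{B}(n)(\widehat{\bpi}-\bt_0) = -\mathbf{B}(n)[\mathbf{b}(\bt_0,n) + \mathbf{v}(\bt_0,n)]$, whose expectation is exactly $-\mathbf{B}(n)\mathbf{b}(\bt_0, n)$; since $\|\mathbf{B}(n)\mathbf{b}(\bt_0,n)\|_\infty \le \|\mathbf{B}(n)\|_\infty\|\mathbf{b}(\bt_0,n)\|_\infty = \O(n^{-(\beta+\beta_1)})$, this yields $\upsilon_1 = \beta + \beta_1$.

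Under Assumptions \ref{assum:C2} and \ref{assum:C3} I would split $\mathbf{b}(\bt,n) = \mathbf{B}(n)\bt + \mathbf{c}(n) + \mathbf{r}(\bt, n)$: the linear part contributes $\beta+\beta_1$ as above, and the nonlinear part requires expanding $\mathbf{r}(\bt_0, n) - \mathbf{r}(\widehat{\bpi}, n)$ around $\bt_0$. Under \ref{assum:C2}, its quadratic structure produces a deterministic cross term of order $n^{-(\beta+\beta_3)}$ (quadratic coefficient times the systematic bias $\mathbf{b}(\bt_0,n)$), a variance cross term controlled by $\mathbb{E}[(\widehat{\bpi}-\bt_0)(\widehat{\bpi}-\bt_0)\tt]$ of order $n^{-(2\alpha+\beta_3)}$, and a remainder of order $n^{-\beta_4}$ from $\mathbf{e}$; combining these by $\min$ and taking the $\max$ with the always-available floor $\alpha+\beta$ (since \ref{assum:C2} $\Rightarrow$ \ref{assum:C}) gives $\upsilon_2$. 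Under \ref{assum:C3}, only $\|\mathbf{r}(\bt,n)\|_\infty = \O(n^{-\beta_3})$ is known, so I would bound $\|\mathbf{r}(\bt_0,n) - \mathbf{r}(\widehat{\bpi},n)\|_\infty = \O(n^{-\beta_3})$ directly, which together with $\beta+\beta_1$ and the floor yields $\upsilon_3 = \max\{\min(\beta+\beta_1,\beta_3),\alpha+\beta\}$. The final chain $\upsilon_1 \ge \upsilon_2 \ge \upsilon_3 \ge \alpha+\beta$ then follows by comparing exponents using $\beta = \min(\beta_1,\beta_2,\beta_3)$, $\alpha < \beta$, and $\beta_4 > \beta_3$.

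The main obstacle I anticipate is the uniform control, in the high-dimensional regime, of the second-order contributions that survive the expectation: the variance cross term must be handled against the quadratic coefficients via $\max_i \sum_{j,k}|r_{ijk}| = \O(1)$, and the Taylor remainder must be shown to be genuinely higher order, which is precisely where $p = o(n^{2\alpha})$ of Assumption \ref{assum:B} enters (guaranteeing $\|\widehat{\bpi}-\bt_0\|_2^2 = \op(1)$, so the expansion is valid and its remainders are dominated). A secondary delicate point is justifying that $\mathbb{E}[\mathbf{v}^*(\widehat{\bpi}, n)]$ is negligible despite the dependence of $\widehat{\bpi}$ on the observed data: because the simulation draws are independent of that sample and $\mathbf{v}^*$ averages $H$ independent mean-zero terms, this contributes at order $\O(1/H)$ and is absorbed for $H$ large, consistently with Section \ref{sec:existing:methods}.
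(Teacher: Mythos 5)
Your proposal follows essentially the same route as the paper's proof: you derive the same exact identity $\mathbb{E}[\widetilde{\bt}] - \bt_0 = \mathbf{b}(\bt_0,n) - \mathbb{E}\left[\mathbf{b}\{\widehat{\bpi}(\bt_0,n),n\}\right]$, then treat the four cases exactly as the paper does --- mean value theorem under Assumption \ref{assum:C}, exact cancellation under Assumption \ref{assum:C1} leaving $-\mathbf{B}(n)\mathbf{b}(\bt_0,n)$, a direct $\mathcal{O}(n^{-\beta_3})$ bound on the $\mathbf{r}$-difference under Assumption \ref{assum:C3}, and under Assumption \ref{assum:C2} the same split of $\theta_j\theta_k - \mathbb{E}[\widehat{\pi}_j\widehat{\pi}_k]$ into a bias-driven part of order $n^{-\beta}$ and a covariance part of order $n^{-2\alpha}$, with the floors $\alpha+\beta$ supplied by the implications \ref{assum:C2} $\Rightarrow$ \ref{assum:C3} $\Rightarrow$ \ref{assum:C}. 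The one point you should correct is the treatment of the simulation noise: its expectation is \emph{exactly} zero, because conditional on $\widehat{\bpi}$ each $\mathbf{v}_h(\widehat{\bpi},n)$ has mean $\0$ (the $h^{th}$ sample is drawn from $F_{\widehat{\bpi}}$), so the tower property removes the term for every finite $H$. Your ``contributes at order $\mathcal{O}(1/H)$ and is absorbed for $H$ large'' is both imprecise (an average of conditionally mean-zero terms is mean-zero, so there is nothing of order $1/H$ to absorb) and, taken literally, insufficient: the proposition's bias orders carry no $H$-dependence and must hold for fixed $H$, which your large-$H$ argument would not deliver. With that one-line fix, the rest of your plan goes through as in the paper.
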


\vspace{0.4cm}
Theorem \ref{thm:bias:finite} and Proposition \ref{prop:bias:finite} provide the bias correction properties of the JINI and BBC estimators, respectively, under assumptions of increasing generality. In Corollary~\ref{cor:JINI:BBC}, we compare the bias orders of the JINI and the BBC estimators under the same assumption framework. We conclude that the JINI estimator provides stronger guarantees in terms of bias correction compared to the BBC estimator, as the bias order of the JINI estimator is always smaller than (or equal to) the one of the BBC estimator under the same assumptions. The proof of Corollary \ref{cor:JINI:BBC} is provided in Appendix~\ref{app:bias:order:compare}.

\begin{Corollary}
\label{cor:JINI:BBC} 
Under Assumptions \ref{assum:A} and \ref{assum:B}, and any one of Assumptions \ref{assum:C}, \ref{assum:C1}, \ref{assum:C2} and \ref{assum:C3}, we have 
$
\left\|\mathbb{E}[\widehat{\bt}] - \bt_0\right\|_2 + \left\|\mathbb{E}[\widetilde{\bt}] - \bt_0\right\|_2 = \mathcal{O}\left(\left\|\mathbb{E}[\widetilde{\bt}] - \bt_0\right\|_2\right).
$
\end{Corollary}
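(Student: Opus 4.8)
The plan is to reduce the stated relation to a comparison of the scalar bias rates already established in Theorem \ref{thm:bias:finite} and Proposition \ref{prop:bias:finite}, and then to verify the relevant inequalities between the exponents case by case. First I would observe that, since both $\|\mathbb{E}[\widehat{\bt}] - \bt_0\|_2$ and $\|\mathbb{E}[\widetilde{\bt}] - \bt_0\|_2$ are nonnegative, the claim $\|\mathbb{E}[\widehat{\bt}] - \bt_0\|_2 + \|\mathbb{E}[\widetilde{\bt}] - \bt_0\|_2 = \mathcal{O}(\|\mathbb{E}[\widetilde{\bt}] - \bt_0\|_2)$ is equivalent to the one-sided statement $\|\mathbb{E}[\widehat{\bt}] - \bt_0\|_2 = \mathcal{O}(\|\mathbb{E}[\widetilde{\bt}] - \bt_0\|_2)$, since for nonnegative $a,b$ one has $a+b = \mathcal{O}(b)$ if and only if $a = \mathcal{O}(b)$, the reverse bound being automatic. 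Substituting the orders from the two preceding results, the JINI bias is of order $p^{1/2}n^{-\gamma}$ and the BBC bias of order $p^{1/2}n^{-\upsilon}$, with $\gamma,\upsilon$ the exponents attached to the chosen version of Assumption \ref{assum:C}. The common factor $p^{1/2}$ cancels, so it suffices to prove $\gamma \geq \upsilon$ in each case, i.e.\ that the JINI rate is never slower than the BBC rate.

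Next I would dispatch the easy cases. Under Assumption \ref{assum:C1}, Theorem \ref{thm:bias:finite} gives $\|\mathbb{E}[\widehat{\bt}] - \bt_0\|_2 = 0$ for sufficiently large $n$, so the left-hand side collapses to $\|\mathbb{E}[\widetilde{\bt}] - \bt_0\|_2$ and the conclusion is immediate. Under Assumption \ref{assum:C} both exponents equal $\alpha+\beta$, so equality holds. Under Assumption \ref{assum:C3}, I would use $\min(\beta+\beta_1,\beta_3) \leq \beta_3$ to write $\upsilon_3 = \max\{\min(\beta+\beta_1,\beta_3), \alpha+\beta\} \leq \max(\beta_3, \alpha+\beta) = \gamma_3$, which is the desired inequality.

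The main obstacle is the quadratic case, Assumption \ref{assum:C2}, where both exponents are a maximum of a minimum whose term sets differ. Writing $A := \min(2\alpha+\beta_3, 2\beta_3, \beta_4)$ for the JINI inner minimum and $B := \min(\beta+\beta_1, \beta+\beta_3, 2\alpha+\beta_3, \beta_4)$ for the BBC inner minimum, the terms $2\alpha+\beta_3$ and $\beta_4$ are shared, and the task is to absorb the extra BBC terms $\beta+\beta_1$ and $\beta+\beta_3$. The key inequality is $\beta \leq \beta_3$, which follows from $\min(\beta_1,\beta_2,\beta_3)=\beta$ in Assumption \ref{assum:C2} and yields $\beta+\beta_3 \leq 2\beta_3$. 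Hence $B \leq \beta+\beta_3 \leq 2\beta_3$, while also $B \leq 2\alpha+\beta_3$ and $B \leq \beta_4$, so that $B \leq \min(2\alpha+\beta_3, 2\beta_3, \beta_4) = A$; taking the maximum with $\alpha+\beta$ on each side gives $\gamma_2 \geq \upsilon_2$.

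Combining the four cases establishes $\gamma \geq \upsilon$ throughout, hence $\|\mathbb{E}[\widehat{\bt}] - \bt_0\|_2 = \mathcal{O}(\|\mathbb{E}[\widetilde{\bt}] - \bt_0\|_2)$, and the corollary follows from the reduction in the first paragraph. I expect the only delicate point to be the bookkeeping in the \ref{assum:C2} case; everything else is a direct consequence of the monotonicity of $\min$ and $\max$ together with the ordering constraints $\beta \leq \beta_i$ and $\beta > \alpha$ supplied by the assumptions.
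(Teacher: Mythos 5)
Your proposal is correct and takes essentially the same approach as the paper: both reduce the corollary to comparing the exponents from Theorem \ref{thm:bias:finite} and Proposition \ref{prop:bias:finite}, dispatch Assumptions \ref{assum:C1} and \ref{assum:C} immediately, and prove $\gamma_i \geq \upsilon_i$ for $i=2,3$ via the same key inequality $\beta \leq \beta_3$ (i.e. $\beta+\beta_3 \leq 2\beta_3$). Your direct monotonicity-of-$\max$ argument for $\upsilon_i \leq \gamma_i$ is only a minor streamlining of the paper's two-case analyses (on whether $\min(2\alpha+\beta_3,2\beta_3,\beta_4)$, respectively $\beta_3$, exceeds $\alpha+\beta$) under Assumptions \ref{assum:C2} and \ref{assum:C3}.
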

\vspace{0.4cm}

As a further illustration of the bias correction properties of these estimators, we present the results of Theorem \ref{thm:bias:finite} and Proposition \ref{prop:bias:finite} in Table \ref{tab:bias:finite} based on the (arguably) typical values $\alpha=1/2$, $\beta_1=1$, $\beta_2 > \beta_1$ and $\beta_3=2$, at least in low dimensional settings. Therefore, our results lead to the following conclusions. \textit{(i)} The bias order of the JINI estimator changes significantly depending on the form of $\mathbf{b}(\bt,n)$. As expected, stronger requirements imposed on $\mathbf{b}(\bt,n)$ lead to better bias correction properties of the JINI estimator. In the case where $\mathbf{b}(\bt,n)$ is linear, the JINI estimator achieves an asymptotically optimal bias correction (i.e unbiasedness for sufficiently large, but finite, $n$). \textit{(ii)} The bias order of the BBC estimator also depends on the form of $\mathbf{b}(\bt,n)$ but up to a certain level, and asymptotically optimal bias correction is not plausible. \textit{(iii)} Overall, the JINI estimator leads to better bias correction properties than the BBC estimator. 

\begin{table}[!tb]
\caption{Bias orders of the JINI estimator $\widehat{\bt}$ and the BBC estimator $\widetilde{\bt}$, with $\alpha=1/2$, $\beta_1=1$, $\beta_2 > \beta_1$ and $\beta_3=2$, according to the form of the finite sample bias considered in Theorem \ref{thm:bias:finite} and Proposition \ref{prop:bias:finite}. The value of 0 for the bias is for sufficiently large (but finite) $n$. $^\star$Under additional requirements, we have $\|\mathbb{E}[\widehat{\bt}] - \bt_0\|_2 = \mathcal{O}\left(p^{1/2}n^{-2}\right)$ and $\|\mathbb{E}[\widetilde{\bt}] - \bt_0\|_2  = \mathcal{O}\left(p^{1/2}n^{-2}\right)$ (see Appendix \ref{app:proof:BBC:bias:finite} for more details).}

\setlength{\tabcolsep}{30pt}
\renewcommand{\arraystretch}{1.25} 
\centering
\begin{tabular}{lcc}
\toprule
Under assumptions & $\left\|\mathbb{E}[\widehat{\bt}] - \bt_0\right\|_2$ & $\left\|\mathbb{E}[\widetilde{\bt}] - \bt_0\right\|_2$ \\
\midrule
\ref{assum:A}, \ref{assum:B} and \ref{assum:C1} & $0$  & $\mathcal{O}\left(p^{1/2}n^{-2}\right)$ \phantom{$^\star$} \\
\ref{assum:A}, \ref{assum:B} and \ref{assum:C2} & $\mathcal{O}\left(p^{1/2}n^{-3}\right)$ \phantom{$^\star$} & $\mathcal{O}\left(p^{1/2}n^{-2}\right)$ \phantom{$^\star$} \\
\ref{assum:A}, \ref{assum:B} and \ref{assum:C3} & $\mathcal{O}\left(p^{1/2}n^{-2}\right)$ \phantom{$^\star$} & $\mathcal{O}\left(p^{1/2}n^{-2}\right)$ \phantom{$^\star$} \\
\ref{assum:A}, \ref{assum:B} and \ref{assum:C} & \;\;$\mathcal{O}\left(p^{1/2}n^{-1.5}\right)$ $^\star$ & \; $\mathcal{O}\left(p^{1/2}n^{-1.5}\right)$ $^\star$ \\
\bottomrule
\end{tabular}
\label{tab:bias:finite}    
\end{table}

\subsection{Inconsistent Initial Estimators}
\label{sec:bias:inconsist}

In general, most of the bias correction methods, including the BBC estimator, are not valid when the initial estimator $\widehat{\bpi}(\bt_0, n)$ is inconsistent. However, we prove that starting from inconsistent initial estimators, the JINI estimator can both achieve the consistency (see Section \ref{sec:consist:asympnorm}) and enjoy from desirable bias correction properties, possibly achieving asymptotically optimal bias correction. Before presenting the results, we impose the following assumption on the asymptotic bias function $\mathbf{a}(\bt)$.

\setcounter{Assumption}{3}
\renewcommand\theAssumption{\Alph{Assumption}}
\begin{Assumption}
\label{assum:D}
The asymptotic bias function $\mathbf{a}(\bt)$ satisfies the followings:
    \begin{enumerate}
        \item $\mathbf{a}(\bt)$ is a contraction map in that for all $\bt_1, \bt_2 \in \bT$ with $\bt_1 \neq \bt_2$, there exists some $0 < M < 1$ such that $\lVert\mathbf{a}(\bt_2)-\mathbf{a}(\bt_1)\rVert_2 \leq M \|\bt_2 - \bt_1\|_2$.
        \item $\mathbf{a}(\bt)$ is twice continuously differentiable. Moreover, let $\mathbf{A}(\bt) \vcentcolon = \partial \mathbf{a}(\bt) / \partial \bt\tt$, then $\mathbf{A}(\bt)$ is such that $\left\|\mathbf{A}(\bt)\right\|_\infty<1$. Let $\mathbf{G}_i(\bt) \vcentcolon = \partial^2 a_i(\bt) / \partial \bt\tt \partial \bt$, then $$\underset{i=1,\ldots,p}{\max} \sum_{j=1}^p \sum_{k=1}^p |G_{ijk}(\bt)| = \mathcal{O}(1).$$
    \end{enumerate}
\end{Assumption} 
\vspace{-0.2cm}
Assumption \ref{assum:D} is a plausible requirement when considering situations where the asymptotic bias is relatively ``small''. This is typically the case when the data exhibit small departures from the assumed model, such as censoring or misclassification (as illustrated in Section \ref{sec:app:inconsist} and Supplementary Material~\ref{supp:cens-pois}). This may also be the case, for example, when neglecting some term in the likelihood functions or the estimating equations of complex models. Moreover, this condition guarantees the identifiability of the JINI estimator\footnote{Indeed, since $\mathbf{a}(\bm{\theta})$ is a contraction map, it is continuous. Moreover, given that $\bpi(\bt)=\bt+\mathbf{a}(\bt)$, taking $\bm{\theta}_1,\bm{\theta}_2\in\bm{\Theta}$ with
	${\bm{\pi}}(\bm{\theta}_1) = {\bm{\pi}}(\bm{\theta}_2)$, then $
	    \big\lVert {\mathbf{a}}(\bm{\theta}_1) - {\mathbf{a}}(\bm{\theta}_2) \big\rVert_2 = \big\lVert \bm{\theta}_1 - \bm{\theta}_2 \big\rVert_2$,
which is only possible if $\btheta_1 = \btheta_2$. Thus, $\bm{\pi}(\bm{\theta})$ is injective, which satisfies the identifiability condition.}. Similar to Assumption~\ref{assum:C} for the finite sample bias, we can also restrict Assumption \ref{assum:D} to the case when $\mathbf{a}(\bt)$ is linear, as stated in Assumption \ref{assum:D1} below.

\setcounter{Assumption}{3}
\renewcommand\theAssumption{\Alph{Assumption}.1}
\begin{Assumption}
\label{assum:D1}
    The asymptotic bias function $\mathbf{a}(\bt)$ can be expressed as $\mathbf{a}(\bt) = \mathbf{A}\bt + \mathbf{z}$, where $\mathbf{A} \in \real^{p \times p}$ and $\mathbf{z} \in \real^p$. Moreover, the matrix $\mathbf{A}$ is such that $0 < \|\mathbf{A}\|_2 < 1$, $(\mathbf{I}+\mathbf{A})$ is nonsingular and $\|(\mathbf{I}+\mathbf{A})^{-1}\|_\infty = \mathcal{O}(1)$. 
\end{Assumption} 
\vspace{0.4cm}

Like for the finite sample bias $\mathbf{b}(\bt,n)$ discussed in Section \ref{sec:bias:consist}, the asymptotic bias $\mathbf{a}(\bt)$ is also typically difficult to obtain, especially when considering complex models in possibly high dimensional settings. However, Assumption \ref{assum:D1} holds for some models (see e.g. \citealp{sur2019modern}). As previously mentioned, our assumptions can be thought of as local approximations and may be numerically assessed via simulations as discussed in Supplementary Material~\ref{supp:bias}.

We provide, in Theorem \ref{thm:bias:asymp} below, the bias orders of the JINI estimator under different bias conditions, the proof of which is provided in Appendix~\ref{app:proof:JINI:bias:asymp} together with a discussion on Assumptions \ref{assum:D} and \ref{assum:D1}. 

\begin{Theorem}
\label{thm:bias:asymp}
Under Assumptions \ref{assum:A}, \ref{assum:B}, and
\begin{enumerate}
    \item Assumptions \ref{assum:C1} and \ref{assum:D1}, the JINI estimator $\widehat{\bt}$ satisfies $\left\|\mathbb{E}[\widehat{\bt}] - \bt_0\right\|_2 = 0$, for sufficiently large (but finite) $n$. 
    
    \item Assumptions \ref{assum:C} and \ref{assum:D}, the JINI estimator $\widehat{\bt}$ satisfies $\left\|\mathbb{E}[\widehat{\bt}] - \bt_0\right\|_2 = \mathcal{O}(p^{1/2}n^{-2\alpha}).$
\end{enumerate}
\end{Theorem}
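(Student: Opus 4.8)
The plan is to analyze the JINI estimator through its implicit characterization $\widehat{\bpi} = \bpi^*(\widehat{\bt}, n)$, which in the idealized ($H \to \infty$) limit reads $\widehat{\bt} = \bpi^{-1}_n(\widehat{\bpi})$, where $\bpi_n(\bt) \vcentcolon= \bpi(\bt, n) = \bt + \mathbf{a}(\bt) + \mathbf{b}(\bt, n)$; the finite-$H$ simulation noise is mean-zero, independent of the observed data, and controllable through $H$, so it enters only the variance and strictly higher-order bias terms. The backbone of the argument is the identity \eqref{eqn:target:unbias}, namely $\bpi^{-1}_n(\mathbb{E}[\widehat{\bpi}]) = \bpi^{-1}_n(\bpi(\bt_0, n)) = \bt_0$, so that the bias is driven entirely by the curvature of $\bpi^{-1}_n$ together with the fluctuations $\mathbf{v}(\bt_0, n) = \widehat{\bpi} - \mathbb{E}[\widehat{\bpi}]$. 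Before expanding, I would first establish that $\bpi_n$ is a well-defined bijection with a Lipschitz inverse on $\bm\Theta$: Assumption~\ref{assum:D} makes $\mathbf{a}$ a contraction, so $\bt + \mathbf{a}(\bt)$ is injective (as in the footnote) and $\mathbf{I} + \mathbf{A}(\bt)$ is invertible with $\|(\mathbf{I} + \mathbf{A}(\bt))^{-1}\|_\infty = \mathcal{O}(1)$ by a Neumann-series bound, and since $\|\mathbf{B}(\bt, n)\|_\infty = \mathcal{O}(n^{-\beta})$ is asymptotically negligible, the full Jacobian $\mathbf{I} + \mathbf{A}(\bt) + \mathbf{B}(\bt, n)$ inherits the same invertibility for all sufficiently large $n$.

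For Part 1, under the linear specifications \ref{assum:C1} and \ref{assum:D1} the map becomes affine, $\bpi_n(\bt) = (\mathbf{I} + \mathbf{A} + \mathbf{B}(n))\bt + \mathbf{z} + \mathbf{c}(n)$. I would show that $\mathbf{M}(n) \vcentcolon= \mathbf{I} + \mathbf{A} + \mathbf{B}(n)$ is nonsingular for large finite $n$: by \ref{assum:D1}, $(\mathbf{I} + \mathbf{A})$ is nonsingular with $\|(\mathbf{I} + \mathbf{A})^{-1}\|_\infty = \mathcal{O}(1)$, and $\|\mathbf{B}(n)\|_\infty = \mathcal{O}(n^{-\beta_1}) \to 0$, so $\mathbf{M}(n) = (\mathbf{I} + \mathbf{A})(\mathbf{I} + (\mathbf{I} + \mathbf{A})^{-1}\mathbf{B}(n))$ is invertible once $n$ exceeds a finite threshold. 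Inverting the affine relation yields $\widehat{\bt} - \bt_0 = \mathbf{M}(n)^{-1}(\widehat{\bpi} - \mathbb{E}[\widehat{\bpi}])$, a purely linear function of the mean-zero $\mathbf{v}(\bt_0, n)$; taking expectations gives $\mathbb{E}[\widehat{\bt}] = \bt_0$ exactly, so $\|\mathbb{E}[\widehat{\bt}] - \bt_0\|_2 = 0$. This parallels Theorem~\ref{thm:bias:finite}.1 with $\mathbf{I} + \mathbf{B}(n)$ replaced by $\mathbf{M}(n)$.

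For Part 2, I would carry out a second-order Taylor expansion of $\bpi^{-1}_n$ about $\mathbb{E}[\widehat{\bpi}] = \bpi(\bt_0, n)$. Writing $\widehat{\bt} = \bt_0 + \mathbf{J}_n\,\mathbf{v}(\bt_0, n) + \tfrac12 \mathbf{Q}_n(\mathbf{v}(\bt_0, n)) + \mathbf{R}_n$, with $\mathbf{J}_n = (\mathbf{I} + \mathbf{A}(\bt_0) + \mathbf{B}(\bt_0, n))^{-1}$, $\mathbf{Q}_n$ collecting the quadratic (Hessian) terms of $\bpi^{-1}_n$, and $\mathbf{R}_n$ the remainder, the linear term vanishes in expectation since $\mathbb{E}[\mathbf{v}(\bt_0, n)] = \0$. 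The Hessian of $\bpi^{-1}_n$ is governed by that of $\bpi_n$, whose curvature comes from $\mathbf{a}$ (the components $\mathbf{G}_i$ with $\max_i \sum_{j,k}|G_{ijk}| = \mathcal{O}(1)$ by \ref{assum:D}) and from $\mathbf{b}$ (of order $n^{-\beta}$, hence negligible). Each component of $\mathbb{E}[\mathbf{Q}_n(\mathbf{v})]$ is then a bilinear contraction of an $\mathcal{O}(1)$ Hessian against the covariance of $\mathbf{v}(\bt_0, n)$, which by Assumption~\ref{assum:B} is $\mathcal{O}(n^{-2\alpha})$ entrywise; the bounded double sum $\sum_{j,k}|G_{ijk}|$ gives each component order $n^{-2\alpha}$, so the $\ell_2$ norm over the $p$ components is $\mathcal{O}(p^{1/2} n^{-2\alpha})$. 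The finite-sample part $\mathbf{b}$ contributes only at the faster rate $n^{-(\alpha+\beta)}$ (the mechanism behind Theorem~\ref{thm:bias:finite}.4), and since $\beta > \alpha$ implies $\alpha + \beta > 2\alpha$, the curvature of the asymptotic bias dominates, yielding $\|\mathbb{E}[\widehat{\bt}] - \bt_0\|_2 = \mathcal{O}(p^{1/2} n^{-2\alpha})$.

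The main obstacle will be the uniform, high-dimensional control of the expansion: showing the third-order remainder $\mathbb{E}[\mathbf{R}_n]$ is genuinely lower order than $p^{1/2} n^{-2\alpha}$ despite $p$ growing, which is where Assumption~\ref{assum:B}'s constraint $p = o(n^{2\alpha})$ and the $\ell_\infty$-type derivative bounds are essential, together with converting those $\ell_\infty$ bounds into the $\ell_2$ bias bound carrying the correct $p^{1/2}$ factor. A secondary technical point is justifying that the finite-$H$ simulation noise, entering through the implicit relation $\widehat{\bpi} = \bpi^*(\widehat{\bt}, n)$ rather than the clean inverse, does not disturb the leading bias order; this follows because that noise is mean-zero, independent of the data, and feeds into the same quadratic mechanism at a rate controlled by $H$.
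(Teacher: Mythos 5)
Your Part 1 is essentially the paper's own argument: the paper likewise reduces the linear case to $\left\{\mathbf{I}+\mathbf{A}+\mathbf{B}(n)\right\}\left\{\mathbb{E}[\widehat{\bt}]-\bt_0\right\}=\0$ and establishes nonsingularity of $\mathbf{I}+\mathbf{A}+\mathbf{B}(n)$ for sufficiently large finite $n$ through exactly your factorization $(\mathbf{I}+\mathbf{A})\left\{\mathbf{I}+(\mathbf{I}+\mathbf{A})^{-1}\mathbf{B}(n)\right\}$ combined with a Neumann-series lemma, so that part is sound and matches.

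Part 2, however, has a concrete gap. You propose a second-order Taylor expansion of $\bpi_n^{-1}$, the inverse of the \emph{full} map $\bt\mapsto\bt+\mathbf{a}(\bt)+\mathbf{b}(\bt,n)$, and you assert that the curvature of $\bpi_n$ coming from $\mathbf{b}$ is ``of order $n^{-\beta}$, hence negligible.'' But Assumption \ref{assum:C} makes $\mathbf{b}(\cdot,n)$ only \emph{once} continuously differentiable, with the $\mathcal{O}(n^{-\beta})$ bound applying to the first derivative $\mathbf{B}(\bt,n)$; no second derivative of $\mathbf{b}$ is assumed to exist, let alone to be of order $n^{-\beta}$. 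Consequently the Hessian of $\bpi_n$ --- and hence your $\mathbf{Q}_n$ and remainder $\mathbf{R}_n$ --- is not well defined under the stated assumptions, and the step ``the Hessian of $\bpi_n^{-1}$ is governed by that of $\bpi_n$'' cannot be carried out as written. The paper's proof is arranged precisely to avoid this: it never expands or inverts $\bpi_n$. Starting from the matching equation, it Taylor-expands only $\mathbf{a}$ (twice continuously differentiable by Assumption \ref{assum:D}) around $\bt_0$, writing $\mathbf{a}(\widehat{\bt})-\mathbf{a}(\bt_0)=\mathbf{A}(\bt_0)(\widehat{\bt}-\bt_0)+\mathbf{u}$ with $\|\mathbb{E}[\mathbf{u}]\|_\infty=\mathcal{O}(n^{-2\alpha})$ (using $\max_i\sum_{j,k}|G_{ijk}(\bt)|=\mathcal{O}(1)$ and $\mathbb{E}|\Delta_j\Delta_k|=\mathcal{O}(n^{-2\alpha})$, after first proving $\|\widehat{\bt}-\bt_0\|_\infty=\mathcal{O}_{\rm p}(n^{-\alpha})$ from the contraction property); it handles $\mathbf{b}(\widehat{\bt},n)-\mathbf{b}(\bt_0,n)$ by the mean value theorem using only the first derivative, giving $\mathcal{O}(n^{-(\alpha+\beta)})$; and it then applies $\left\{\mathbf{I}+\mathbf{A}(\bt_0)\right\}^{-1}$, with $\|\cdot\|_\infty$ norm $\mathcal{O}(1)$, exactly once on the left. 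Your plan is repairable by this rearrangement --- expand the forward map $\mathbf{a}$ rather than the inverse --- after which your rate accounting ($n^{-2\alpha}$ from the curvature of $\mathbf{a}$, $n^{-(\alpha+\beta)}$ from $\mathbf{b}$, with $\alpha+\beta>2\alpha$) coincides with the paper's. One smaller correction: at fixed $H$ the simulation noise enters the quadratic term at the \emph{same} order $n^{-2\alpha}$ (scaled by $1/H$), not at strictly higher order; this is harmless for the claimed bound, and the mean-zero step $\mathbb{E}\left[H^{-1}\sum_{h=1}^H\mathbf{v}_h(\widehat{\bt},n)\right]=\0$ is taken for granted by the paper as well, so no differential criticism attaches there.
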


Theorem \ref{thm:bias:asymp} states that the JINI estimator can achieve asymptotically optimal bias correction when both the finite sample and asymptotic biases are linear. Moreover, the comparison of Theorems \ref{thm:bias:finite} and \ref{thm:bias:asymp} indicates that the bias correction properties of the JINI estimator may deteriorate when considering an inconsistent initial estimator. Nevertheless, the JINI estimator (possibly based on an inconsistent initial estimator) provides superior guarantees than the MLE in terms of bias correction. Indeed, in low dimensional settings, the bias order of the MLE is typically $\mathcal{O}(n^{-1})$ (see e.g. \citealp{cox1968general} and \citealp{kosmidis2014bias}). Our results show that the JINI estimator (when the asymptotic bias of the initial estimator is not nil) possesses similar bias properties as the MLE.

Moreover, the JINI is a consistent estimator as discussed in Section \ref{sec:consist:asympnorm}. One of the notable advantages of considering inconsistent initial estimators is that they can be chosen for computational efficiency, as for example using approximations of integrals or ignoring (small) departures from the models such as the ones induced by censoring or misclassification. This is of particular interest in high dimensional settings as well as for complex models. The advantageous properties of the JINI estimator are further illustrated in Section~\ref{sec:app:inconsist} and Supplementary Material~\ref{supp:cens-pois}.

\section{Consistency and Asymptotic Normality}
\label{sec:consist:asympnorm}

In this section we provide the standard statistical properties of the JINI estimator, namely consistency and asymptotic normality. While these properties have been studied in low dimensional settings (see e.g. \citealp{gourieroux1993indirect}), these results may not be easily applicable in high dimensional settings. Under Assumptions \ref{assum:A} to \ref{assum:D}, the JINI estimator is shown to be consistent in Theorem \ref{thm:consist:asympnorm} using standard asymptotic techniques (see e.g. \citealp{mcfadden1989method}). However, the asymptotic normality result is more challenging. In order to derive this result, we consider Assumption \ref{assum:E} below.
\setcounter{Assumption}{4}
\renewcommand\theAssumption{\Alph{Assumption}}
\begin{Assumption}
\label{assum:E}
    For any $\mathbf{s} \in \real^p$ such that $\|\mathbf{s}\|_2=1$, we have 
    \begin{equation*}
        \sqrt{n} \mathbf{s}\tt \mathbf{\Sigma}(\bt_0)^{-1/2}\left\{\widehat{\bpi}(\bt_0, n) - \bpi(\bt_0) \right\} \overset{d}{\to} \mathcal{N}(0,1),
    \end{equation*}
    where $\mathbf{\Sigma}(\bt)$ is nonsingular and continuous in $\bt \in \bT$. Moreover, for all $i=1,\ldots,p$, $\partial\mathbf{\Sigma}(\bt_0) /\partial \theta_i $ exists and let 
    \begin{equation*}
        w_i(\bt_0) \vcentcolon = \lambda_{\max}\left[\{\mathbf{I}+\mathbf{A}(\bt_0)\}^{-1} \frac{\partial \mathbf{\Sigma}(\bt_0)}{\partial \theta_i} \{\mathbf{I}+\mathbf{A}(\bt_0)\}^{-\rm T}\right],
    \end{equation*}
    where $\mathbf{A}^{-\rm T} \vcentcolon = \left(\mathbf{A}^{-1}\right)^{\rm T}$, $\lambda_{\max}(\mathbf{U})$ denotes the largest eigenvalue of the matrix $\mathbf{U}$ and $\mathbf{A}(\bt)$ is stated in Assumption \ref{assum:D}, then we have $\|\mathbf{w}(\bt_0)\|_2 = \mathcal{O}(1)$ where $\mathbf{w}(\bt_0) \vcentcolon= \left\{w_1(\bt_0) \ldots w_p(\bt_0)\right\}\tt$.
\end{Assumption} 
\vspace{0.4cm}

When $\widehat{\bpi}(\bt_0, n)$ is consistent, we have $\bpi(\bt_0) = \bt_0$ and consequently, the first condition of Assumption \ref{assum:E} is equivalent to the asymptotic normality of the initial estimator. When $\widehat{\bpi}(\bt_0, n)$ is inconsistent, as we have $\bpi(\bt_0) = \bt_0 + \mathbf{a}(\bt_0)$, it implies the asymptotic normality of the initial estimator with respect to a ``shifted'' target $\bt_0 + \mathbf{a}(\bt_0)$. Moreover, for the second condition of Assumption \ref{assum:E}, when $\widehat{\bpi}(\bt_0, n)$ is consistent, $w_i(\bt_0)$ is simplified to be $w_i(\bt_0) = \lambda_{\max}\left\{\partial \mathbf{\Sigma}(\bt_0) / \partial \theta_i \right\}$. In this case, the quantity $\left\|\mathbf{w}(\bt_0)\right\|_2$ essentially measures the sensitivity of the asymptotic variance $\bf{\Sigma}(\bt_0)$ with respect to the parameters, and the second condition of Assumption \ref{assum:E} indicates that this measure should be bounded, which is the case, for example, for (generalized) linear models. Nevertheless, this requirement may not be easy to verify in general. 
We provide in Theorem \ref{thm:consist:asympnorm} below the results on the consistency and asymptotic normality of the JINI estimator, the proof of which is given in Appendix~\ref{app:proof:consist:asympnorm}.

\begin{Theorem}
\label{thm:consist:asympnorm}
    Under Assumptions \ref{assum:A}, \ref{assum:B}, \ref{assum:C} and \ref{assum:D}, the JINI estimator $\widehat{\bt}$ is such that $\left\|\widehat{\bt} - \bt_0\right\|_2 = o_{\rm p}(1).$
    Moreover, with the addition of Assumption \ref{assum:E}, the JINI estimator $\widehat{\bt}$ satisfies
    \begin{equation*}
        \left(1+\frac{1}{H}\right)^{-1/2} \sqrt{n} \mathbf{s}\tt \left[\{\mathbf{I}+\mathbf{A}(\bt_0)\}^{-1} \mathbf{\Sigma}(\bt_0) \{\mathbf{I}+\mathbf{A}(\bt_0)\}^{{-\rm T}}\right]^{-1/2} (\widehat{\bt} - \bt_0) \overset{d}{\to} \mathcal{N}(0,1).
    \end{equation*}
\end{Theorem}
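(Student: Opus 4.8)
For the consistency claim, the plan is to work directly with the fixed-point relation $\widehat{\bpi} = \bpi^*(\widehat{\bt}, n)$ that defines the JINI estimator in \eqref{eqn:def:JINI}. First I would substitute the decomposition \eqref{eqn:bias-decomp} on both sides, writing $\widehat{\bpi} = \bpi(\bt_0, n) + \mathbf{v}(\bt_0, n)$ for the observed estimator and $\bpi^*(\widehat{\bt}, n) = \bpi(\widehat{\bt}, n) + \bar{\mathbf{v}}(\widehat{\bt}, n)$ for its simulated average, where $\bar{\mathbf{v}}(\bt, n) \vcentcolon= \tfrac{1}{H}\sum_{h=1}^H \mathbf{v}_h(\bt, n)$ collects the $H$ independent simulation errors. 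Rearranging yields
\[
\widehat{\bt} - \bt_0 = \{\mathbf{a}(\bt_0) - \mathbf{a}(\widehat{\bt})\} + \{\mathbf{b}(\bt_0, n) - \mathbf{b}(\widehat{\bt}, n)\} + \mathbf{v}(\bt_0, n) - \bar{\mathbf{v}}(\widehat{\bt}, n).
\]
Taking $\|\cdot\|_2$, I would use the contraction property of Assumption \ref{assum:D} to bound the first bracket by $M\|\widehat{\bt} - \bt_0\|_2$ with $M < 1$ and absorb it into the left-hand side. The remaining terms are controlled via $\|\cdot\|_2 \le p^{1/2}\|\cdot\|_\infty$: the finite-sample bias bracket is $\O(p^{1/2} n^{-\beta})$ by Assumption \ref{assum:C} and the two noise terms are $\O_{\rm p}(p^{1/2} n^{-\alpha})$ by Assumption \ref{assum:B}. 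Since $\beta > \alpha$, this gives $\|\widehat{\bt} - \bt_0\|_2 = \O_{\rm p}(p^{1/2} n^{-\alpha})$, which is $o_{\rm p}(1)$ because $p = o(n^{2\alpha})$. The one ingredient requiring care is that $\bar{\mathbf{v}}$ is evaluated at the random argument $\widehat{\bt}$; I would establish a uniform bound $\sup_{\bt \in \bT}\|\bar{\mathbf{v}}(\bt, n)\|_\infty = \O_{\rm p}(n^{-\alpha})$ over the compact set $\bT$ (Assumption \ref{assum:A}) so that $\bar{\mathbf{v}}(\widehat{\bt}, n)$ inherits the rate.

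For the asymptotic normality, I would linearize the fixed-point relation around $\bt_0$. Since $\bT$ is convex (Assumption \ref{assum:A}) and $\mathbf{a}, \mathbf{b}$ are differentiable (Assumptions \ref{assum:C}, \ref{assum:D}), a mean value expansion of $\bpi^*(\widehat{\bt}, n)$ about $\bt_0$ gives
\[
\mathbf{v}(\bt_0, n) - \bar{\mathbf{v}}(\bt_0, n) = \left\{\mathbf{I} + \mathbf{A}(\bt_0) + \mathbf{B}(\bt_0, n) + \frac{\partial \bar{\mathbf{v}}(\bt_0, n)}{\partial \bt\tt}\right\}(\widehat{\bt} - \bt_0) + \mathbf{R},
\]
with $\mathbf{R}$ collecting the second-order terms. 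The leading Jacobian is $\mathbf{I} + \mathbf{A}(\bt_0)$, nonsingular by Assumption \ref{assum:D}, so the target representation is $\widehat{\bt} - \bt_0 = \{\mathbf{I} + \mathbf{A}(\bt_0)\}^{-1}\{\mathbf{v}(\bt_0, n) - \bar{\mathbf{v}}(\bt_0, n)\}$ up to lower-order terms. The key probabilistic object is $\mathbf{v}(\bt_0, n) - \bar{\mathbf{v}}(\bt_0, n)$: the observed error and the $H$ independent simulated errors are mutually independent, each with asymptotic variance $n^{-1}\mathbf{\Sigma}(\bt_0)$ by Assumption \ref{assum:E}, so the difference has variance $(1 + 1/H)\,n^{-1}\mathbf{\Sigma}(\bt_0)$. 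This accounts for both the $(1 + 1/H)^{-1/2}$ factor and the sandwich $\{\mathbf{I} + \mathbf{A}(\bt_0)\}^{-1}\mathbf{\Sigma}(\bt_0)\{\mathbf{I} + \mathbf{A}(\bt_0)\}^{{-\rm T}}$ in the stated normalization.

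The main obstacle is showing that $\mathbf{R}$ together with the lower-order Jacobian pieces is negligible after the $\mathbf{s}$-projection and $\sqrt{n}$ scaling, \emph{uniformly as $p \to \infty$}. Here I would combine the rate $\|\widehat{\bt} - \bt_0\|_2 = \O_{\rm p}(p^{1/2} n^{-\alpha})$ from the consistency step with three controls: the Hessian bound $\max_{i} \sum_{j,k}|G_{ijk}(\bt)| = \O(1)$ of Assumption \ref{assum:D} for the quadratic remainder coming from $\mathbf{a}$; the derivative rate $\|\mathbf{B}(\bt, n)\|_\infty = \O(n^{-\beta})$ with $\beta > \alpha$ of Assumption \ref{assum:C} for the finite-sample-bias Jacobian; and a uniform bound on $\partial \bar{\mathbf{v}}/\partial \bt\tt$ for the simulation-noise Jacobian. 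The condition $p = o(n^{2\alpha})$ is precisely what forces each of these contributions to vanish relative to the $n^{-1/2}$ scale after projection.

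Finally, I would establish the scalar central limit theorem for the projected, normalized leading term. Projecting onto $\mathbf{s}$ with $\|\mathbf{s}\|_2 = 1$ reduces the problem to one dimension, which is essential since joint convergence of the full $p$-vector cannot be expected in the high-dimensional regime. Using the marginal CLT of Assumption \ref{assum:E} for each of the independent errors together with their independence, the normalized difference converges to $\Nor$ with the $(1 + 1/H)$ variance inflation, via a Lyapunov–Slutsky argument. The role of $\|\mathbf{w}(\bt_0)\|_2 = \O(1)$ in Assumption \ref{assum:E} is to guarantee that replacing the $\bt$-dependent variance normalization by its value at $\bt_0$ does not disturb the limit as $p$ grows; I would invoke it, with the continuity and nonsingularity of $\mathbf{\Sigma}(\bt)$, to pass from the variance of the leading term to the stated normalizing matrix and conclude by Slutsky's theorem.
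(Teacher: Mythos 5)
Your consistency argument is correct but takes a different route from the paper: the paper proves consistency by verifying the conditions of Theorem 2.1 of Newey and McFadden (1994) for $Q(\bt) = \left\|\bpi(\bt_0)-\bpi(\bt)\right\|_2$ and $\widehat{Q}(\bt,n)$ (compactness, continuity, unique minimization via injectivity of $\bpi$, and uniform convergence), whereas you absorb the contraction bound $\lVert\mathbf{a}(\widehat{\bt})-\mathbf{a}(\bt_0)\rVert_2 \leq M\lVert\widehat{\bt}-\bt_0\rVert_2$ into the left-hand side of the fixed-point identity. Your route is shorter and buys an explicit rate $\mathcal{O}_{\rm p}(p^{1/2}n^{-\alpha})$ rather than bare consistency; indeed the paper itself uses exactly this absorption device (in the $\ell_\infty$ norm, via $\|\mathbf{A}(\bt^*)\|_\infty<1$ and the mean value theorem) in the proof of Theorem 2 to get $\|\widehat{\bt}-\bt_0\|_\infty = \mathcal{O}_{\rm p}(n^{-\alpha})$. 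The subtlety you flag, that the noise bound must hold at the random argument $\widehat{\bt}$, is shared (implicitly) by the paper's own proofs, so this part is acceptable.

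The asymptotic normality part has a genuine gap: your linearization $\bar{\mathbf{v}}(\widehat{\bt},n) \approx \bar{\mathbf{v}}(\bt_0,n) + \left\{\partial\bar{\mathbf{v}}(\bt_0,n)/\partial\bt\tt\right\}(\widehat{\bt}-\bt_0)$ differentiates the simulation noise with respect to $\bt$. Nothing in Assumptions \ref{assum:A}--\ref{assum:E} provides this: while $\bpi(\bt,n)$ is differentiable, the simulated path $\bt\mapsto\widehat{\bpi}_h(\bt,n)$ with fixed seeds is not assumed differentiable, and for the discrete-response models this framework targets (logistic, negative binomial regression) it is a discontinuous function of $\bt$ (the simulated responses are step functions of the parameter), so $\partial\bar{\mathbf{v}}/\partial\bt\tt$ does not exist. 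Equivalently, your reduction to the single leading object $\mathbf{v}(\bt_0,n)-\bar{\mathbf{v}}(\bt_0,n)$ requires the stochastic-equicontinuity statement $\mathbf{s}\tt\left\{\bar{\mathbf{v}}(\widehat{\bt},n)-\bar{\mathbf{v}}(\bt_0,n)\right\} = o_{\rm p}(n^{-1/2})$, which does not follow from the stated assumptions. The paper never moves the simulation noise back to $\bt_0$: it keeps the decomposition $\widehat{\bt}-\bt_0 = \bm{\mathcal{A}}\left\{\mathbf{b}(\bt_0,n)+\mathbf{v}(\bt_0,n)\right\} - \bm{\mathcal{A}}\left\{\mathbf{b}(\widehat{\bt},n)+ \frac{1}{H}\sum_{h=1}^H \mathbf{v}_h(\widehat{\bt},n)\right\} - \bm{\mathcal{A}}\mathbf{u}$ with $\bm{\mathcal{A}}=\left\{\mathbf{I}+\mathbf{A}(\bt_0)\right\}^{-1}$, applies the CLT of Assumption \ref{assum:E} to each simulated error at the point $\widehat{\bt}$ (giving independent normals with variance $W_s(\widehat{\bt}) = \mathbf{s}\tt\bm{\mathcal{A}}\mathbf{\Sigma}(\widehat{\bt})\bm{\mathcal{A}}\tt\mathbf{s}$), and then uses $\|\mathbf{w}(\bt_0)\|_2=\mathcal{O}(1)$ — through a trace/eigenvalue inequality and the consistency of $\widehat{\bt}$ — to show $W_s(\widehat{\bt})^{1/2} = W_s(\bt_0)^{1/2}+o_{\rm p}(1)$; the $(1+1/H)$ inflation then comes from combining $z_0 + \frac{1}{H}\sum_{h=1}^H z_h$. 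This is where $\mathbf{w}(\bt_0)$ is actually needed, not (as you suggest) to stabilize a normalization already evaluated at $\bt_0$. Your quadratic-remainder control and variance accounting match the paper, but unless you replace the noise-differentiation step by an argument of the paper's type (CLT at the random point plus continuity of the variance), the proof does not go through.
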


This result shows that the JINI estimator is consistent under plausible conditions and asymptotically normally distributed under additional requirements. Moreover, the asymptotic variance of the estimator differs from the one of the initial estimator $\widehat{\bpi}(\bt_0, n)$, especially when $\widehat{\bpi}(\bt_0, n)$ is inconsistent. Indeed, when $\widehat{\bpi}(\bt_0, n)$ is consistent, we have (in low dimensional settings)
\begin{equation}
\label{eqn:asymp:norm:consistent}
    \sqrt{n} (\widehat{\bt} - \bt_0) \overset{d}{\to}  \mathcal{N} \big(\0, \left(1+1/H\right) \mathbf{\Sigma}(\bt_0)\big), 
\end{equation}
where $\mathbf{\Sigma}(\bt_0)$ corresponds to the asymptotic variance of $\widehat{\bpi}(\bt_0, n)$. In this case, the asymptotic variance of $\widehat{\bt}$ is slightly inflated by the term $1+1/H$ compared to $\mathbf{\Sigma}(\bt_0)$. Therefore, this bias correction approach comes at the price of a typically marginally increased asymptotic variance. However, in practice, the JINI estimator can not only correct the bias of its initial estimator but also possibly reduce its finite sample variance as illustrated, for example, in Section~\ref{sec:app:consist} (see also Supplementary Material~\ref{supp:logistic}). When $\widehat{\bpi}(\bt_0, n)$ is an inconsistent initial estimator, this procedure reduces the bias of $\widehat{\bpi}(\bt_0, n)$ but its (asymptotic) variance may considerably differ from $\bm{\Sigma}(\bt_0)$. Indeed, the result in Theorem \ref{thm:consist:asympnorm} can be rewritten (in low dimensional settings) as
\begin{equation}
\label{eqn:asymp:norm:inconsistent}
    \sqrt{n}(\widehat{\bt} - \bt_0) \overset{d}{\to} \mathcal{N}\left(\0, \left(1+1/H\right)\left\{\mathbf{I}+\mathbf{A}(\bt_0)\right\}^{-1} \mathbf{\Sigma}(\bt_0) \{\mathbf{I}+\mathbf{A}(\bt_0)\}^{-\rm T} \right),
\end{equation}
which is in line with the indirect inference results presented, for example, in \cite{gourieroux1993indirect}. The results in \eqref{eqn:asymp:norm:consistent} and \eqref{eqn:asymp:norm:inconsistent} suggest that, depending on the asymptotic bias function $\mathbf{a}(\bt)$, the asymptotic variance of $\widehat{\bt}$ is not necessarily larger when considering an inconsistent initial estimator, compared to the result obtained with a consistent initial estimator. Overall, both the theoretical and practical results highlight the fact that the JINI estimator can correct finite sample and/or asymptotic biases without necessarily significantly inflating the variance. 


\section{Computational Aspects}
\label{sec:computation}

As mentioned before, the JINI estimator can be computed efficiently using the IB algorithm provided in \eqref{eqn:IB}. In this section, we provide the convergence properties of the IB sequence in Theorem~\ref{thm:ib} with the proof in Appendix~\ref{app:proof:ib}. Detailed implementation instructions for the IB algorithm are provided in Table \ref{tab:ib:algo} in Appendix \ref{app:imple:ib}.

\begin{Theorem}
\label{thm:ib}
    Under Assumptions \ref{assum:A}, \ref{assum:B}, \ref{assum:C} and \ref{assum:D}, the IB sequence in \eqref{eqn:IB} satisfies
    \begin{align*}
        \left\|\bt^{(k)} - \widehat{\bt}\right\|_2 = \mathcal{O}_{\rm p}\left(p^{1/2} \exp(-ck)\right), 
    \end{align*}
    where $c$ is some positive real number.
\end{Theorem}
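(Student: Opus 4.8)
The plan is to show that, on an event of probability tending to one, the IB update map $\bm{T}(\cdot,n)$ of \eqref{eqn:def:function:T} is a contraction in the $\ell_2$ norm on the convex set $\bm\Theta$, with a contraction constant bounded away from $1$. Exponential convergence to its fixed point $\widehat{\bt}$ then follows by iterating the contraction, and the factor $p^{1/2}$ enters only through the initial distance $\|\bt^{(0)} - \widehat{\bt}\|_2$.

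First I would use that $\widehat{\bt}$ is the fixed point of $\bm{T}(\cdot,n)$, so that the recursion \eqref{eqn:IB} yields $\bt^{(k+1)} - \widehat{\bt} = \bm{T}(\bt^{(k)},n) - \bm{T}(\widehat{\bt},n)$. Because $\bm\Theta$ is convex (Assumption~\ref{assum:A}), I would expand this increment through the integral form of the mean value theorem along the segment joining $\widehat{\bt}$ and $\bt^{(k)}$, obtaining $\bt^{(k+1)} - \widehat{\bt} = \mathbf{J}_k\,(\bt^{(k)} - \widehat{\bt})$ with $\mathbf{J}_k \vcentcolon= \int_0^1 \D\bm{T}\big(\widehat{\bt} + t(\bt^{(k)} - \widehat{\bt}), n\big)\,dt$. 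The crux is then a uniform bound on the operator norm of the Jacobian $\D\bm{T}(\bt,n)$.

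Using the decomposition \eqref{eqn:bias-decomp} at the level of the simulated criterion, $\bpi^*(\bt,n) = \bt + \mathbf{a}(\bt) + \mathbf{b}(\bt,n) + \vH{\bt}$, the Jacobian of the update map is $\D\bm{T}(\bt,n) = \mathbf{I} - \D\bpi^*(\bt,n) = -\mathbf{A}(\bt) - \mathbf{B}(\bt,n) - \DvH{\bt}$, whence
\begin{equation*}
\left\|\D\bm{T}(\bt,n)\right\|_2 \;\leq\; \left\|\mathbf{A}(\bt)\right\|_2 + \left\|\mathbf{B}(\bt,n)\right\|_2 + \left\|\DvH{\bt}\right\|_2 .
\end{equation*}
The first term is decisive: since $\mathbf{a}$ is an $M$-contraction in $\ell_2$ (Assumption~\ref{assum:D}.1) and $\mathbf{A}(\bt)$ is its Jacobian, the mean value inequality gives $\|\mathbf{A}(\bt)\|_2 \leq M < 1$ uniformly over $\bm\Theta$. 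For the second term, converting the infinite-norm control of Assumption~\ref{assum:C} to the operator norm and invoking the rate conditions $p = o(n^{2\alpha})$ and $\beta > \alpha$ of Assumptions~\ref{assum:B}--\ref{assum:C} yields $\|\mathbf{B}(\bt,n)\|_2 = o(1)$. The third term must be shown to be $\op(1)$ uniformly in $\bt$, for which I would combine the variance and rate controls of Assumption~\ref{assum:B} with a concentration (maximal-inequality) argument for the averaged simulation-noise Jacobian. Together these give, on an event of probability tending to one, a constant $\rho$ with $M < \rho < 1$ such that $\|\D\bm{T}(\bt,n)\|_2 \leq \rho$ along the relevant segments, hence $\|\mathbf{J}_k\|_2 \leq \rho$ and $\|\bt^{(k+1)} - \widehat{\bt}\|_2 \leq \rho\,\|\bt^{(k)} - \widehat{\bt}\|_2$.

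Iterating this inequality gives $\|\bt^{(k)} - \widehat{\bt}\|_2 \leq \rho^k\,\|\bt^{(0)} - \widehat{\bt}\|_2$. As $\bt^{(0)} = \widehat{\bpi}$ and $\widehat{\bt}$ both lie in the compact set $\bm\Theta$, whose $\ell_2$-diameter is $\mathcal{O}(p^{1/2})$, the initial distance is $\mathcal{O}_{\rm p}(p^{1/2})$; setting $c \vcentcolon= -\log\rho > 0$ then delivers $\|\bt^{(k)} - \widehat{\bt}\|_2 = \mathcal{O}_{\rm p}\big(p^{1/2}\exp(-ck)\big)$, as claimed. The main obstacle is the third term: showing that $\DvH{\bt}$ is $\op(1)$ in operator norm \emph{uniformly} over $\bm\Theta$---not merely pointwise---so that the contraction constant stays below one simultaneously along every segment and for every $k$. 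This needs regularity on the simulation-noise Jacobian beyond the pointwise control that Assumption~\ref{assum:B} places on $\mathbf{v}$ itself, together with careful $\ell_\infty$-to-$\ell_2$ bookkeeping in the regime $p = o(n^{2\alpha})$. A secondary point is to verify that the iterates stay in $\bm\Theta$ so that the segment expansions are legitimate, which follows from the contraction confining $\bt^{(k)}$ to a shrinking neighborhood of $\widehat{\bt} \in \Int(\bm\Theta)$.
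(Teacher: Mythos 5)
Your overall architecture (make $\bm{T}(\cdot,n)$ an $\ell_2$-contraction on $\bm\Theta$, iterate, and let $p^{1/2}$ enter only through the initial distance, which is $\mathcal{O}_{\rm p}(p^{1/2})$ by compactness) is the same as the paper's, and your last step is exactly the paper's. The gap is the route through Jacobians, and it is not merely the technical loose end you flag at the end: it cannot be closed under Assumptions \ref{assum:A}--\ref{assum:D}. Assumption \ref{assum:B} controls only the magnitude $\lVert\mathbf{v}(\bt,n)\rVert_\infty$, never any derivative of the simulation noise. Worse, for the discrete-response models this framework is built for (negative binomial, logistic, Poisson regressions), the map $\bt \mapsto \widehat{\bpi}_h(\bt,n)$ computed with fixed seeds is piecewise constant in $\bt$ (the simulated samples $F_{\bt}^{-1}(\mathbf{u}_h)$ change only when $\bt$ crosses thresholds). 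Hence $\bm{T}(\cdot,n)$ is not even continuous along your segments, so the identity $\bt^{(k+1)}-\widehat{\bt} = \mathbf{J}_k\,(\bt^{(k)}-\widehat{\bt})$ is unjustified; and wherever the derivative does exist one has $\D\widehat{\bpi}_h(\bt,n)=\0$, so $\frac{1}{H}\sum_{h=1}^H\D\mathbf{v}_h(\bt,n) = -\left\{\mathbf{I}+\mathbf{A}(\bt)+\mathbf{B}(\bt,n)\right\}$ and therefore $\D\bm{T}(\bt,n)=\mathbf{I}$ almost everywhere. The Jacobian has operator norm one: no pointwise-derivative argument can produce $\rho<1$. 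The contraction here is intrinsically an increment-scale phenomenon, not an infinitesimal one.

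The paper's proof is organized precisely to avoid differentiating the noise. It bounds increments term by term: the $\mathbf{a}$-term directly by the assumed $\ell_2$-contraction, $\lVert\mathbf{a}(\bt_2)-\mathbf{a}(\bt_1)\rVert_2 \leq M\lVert\bt_2-\bt_1\rVert_2$ (no Jacobian needed); the $\mathbf{b}$-term by the mean value theorem (differentiability of $\mathbf{b}$ \emph{is} assumed in \ref{assum:C}) together with $\lVert\mathbf{B}(\bt^*,n)\rVert_2 \leq \lVert\mathbf{B}(\bt^*,n)\rVert_F \leq p^{1/2}\lVert\mathbf{B}(\bt^*,n)\rVert_\infty = \mathcal{O}(p^{1/2}n^{-\beta})$; and, crucially, the noise term not as a Lipschitz term at all but as an additive perturbation, $\frac{1}{H}\sum_{h=1}^H\lVert\mathbf{v}_h(\bt_2,n)-\mathbf{v}_h(\bt_1,n)\rVert_2 = \mathcal{O}_{\rm p}(H^{-1/2}p^{1/2}n^{-\alpha})$, which is $o_{\rm p}(1)$ under \ref{assum:B} and is absorbed for large $n$ and $H$. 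A second difference: you take the existence of the fixed point $\widehat{\bt}$ for granted, whereas it is part of what must be proved; the paper extends $\bm{T}(\cdot,n)$ from $\bm\Theta$ to all of $\real^p$ via the Kirszbraun theorem and then invokes the Banach fixed-point theorem to obtain existence and uniqueness before deriving the rate $\left\|\bt^{(k)}-\widehat{\bt}\right\|_2 \leq \frac{\epsilon^k}{1-\epsilon}\left\|\bt^{(1)}-\bt^{(0)}\right\|_2 = \mathcal{O}_{\rm p}(p^{1/2}\epsilon^k)$. To repair your proposal, replace every Jacobian bound by the corresponding increment bound and add the Kirszbraun/Banach step; that is the paper's proof.
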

\vspace{0.4cm}

Theorem \ref{thm:ib} shows that the IB algorithm converges to the JINI estimator (in norm) at an exponential rate. However, this convergence can be slower when $p$ is large. In practice, the number of iterations needed appears to be relatively small. For example, we present in Figure \ref{fig:ib-num-iter} in Supplementary Material~\ref{supp:ib-num-iter} the number of iterations needed for the IB algorithm to compute the JINI estimators considered in Sections~\ref{sec:app:consist}~and~\ref{sec:app:inconsist} (as well as in Supplementary Material~\ref{supp:logistic}), and we find that the computations typically converge in less than 15 iterations. 

Interestingly, from Theorem~\ref{thm:ib}, we can easily define a sequence $\{k_n\}\in\mathbb{N}^+$ such that $k_n = \mathcal{O}\big(\log(pn)\big)$ and $\left\|\bt^{(k_n)} - \widehat{\bt}\right\|_2 = o_{\rm p}(1)$. Additionally with the consistency result of $\widehat{\bt}$ as presented in Theorem~\ref{thm:consist:asympnorm}, we have 
\begin{equation*}
    \left\|\bt^{(k_n)} - \bt_0\right\|_2 \leq \left\|\bt^{(k_n)} - \widehat{\bt}\right\|_2 + \left\|\widehat{\bt} - \bt_0\right\|_2 = o_{\rm p}(1),
\end{equation*}
which suggests that $\bt^{(k_n)}$, without reaching the limit $\widehat{\bt}$, is nevertheless consistent.

Therefore, the IB algorithm provides a  computationally efficient approach to obtain the JINI estimator, which could otherwise be difficult to compute. This algorithm alleviates considerably the limitations of some simulation-based estimators, especially in high dimensional settings.

\section{Applications with Consistent Initial Estimators: An Example with the Negative Binomial Regression}
\label{sec:app:consist}

In this section, we apply the methodology developed in Sections \ref{sec:existing:methods} and \ref{sec:bias} to estimate the parameters of the negative binomial regression. We additionally consider the logistic regression in Supplementary Material~\ref{supp:logistic} and the logistic regression with random intercept in Supplementary Material~\ref{supp:glmm}. For each model, we analyze a simulation-based approximation of the bias function of the MLE. We then  construct both the BBC and the JINI estimators based on the MLE, and compare their finite sample properties, particularly the bias and RMSE, in high dimensional settings where $p$ is relatively large compared to $n$. For all models, we use the IB algorithm to compute the JINI estimator, as it is easy and straightforward to implement (see Table \ref{tab:ib:algo} in Appendix~\ref{app:imple:ib}) and does not require model specific analytical derivations. Overall, our results show that the JINI estimator allows to significantly reduce (or even apparently eliminate) the finite sample bias of the initial estimator, with comparable (or even lower) RMSE to alternative bias corrected estimators. In particular, for the logistic regression, the bias function of the MLE appears to be (approximately locally) linear (see Figure~\ref{fig:logistic-bias} in Supplementary Material~\ref{supp:logistic}), which is in line with the result of \cite{sur2019modern}. In this application, we additionally consider a bias reduced estimator proposed by \cite{KoFi:09} which is based on an analytical modification of the score function. We observe that this estimator and the JINI estimator both achieve apparent unbiasedness with comparable RMSE, which, for the JINI estimator, is suggested by the first result of Theorem~\ref{thm:bias:finite} in Section~\ref{sec:bias}. Therefore, if computational time is not an important limitation, the JINI estimator is an interesting option when considering consistent estimators that may suffer from finite sample biases.

We now discuss in more details the negative binomial regression, which is typically used to analyze count response data which exhibit overdispersion, conditional on a set of covariates. Specifically, it links the (independent) responses $y_i \in \{0, 1, \ldots, m\}$, $m \in \mathbb{N}^+$ and $i=1,\ldots,n$, with the linear predictor $\mathbf{x}_i\tt\boldsymbol{\beta}$ where $\mathbf{x}_i$ is a vector of fixed covariates, through the exponential link $\mu_i \vcentcolon = \mathbb{E}[Y_i\vert \x_i] = \exp(\mathbf{x}_i\tt \bm{\beta})$. The conditional density, with overdispersion parameter $\alpha > 0$, can be written as 
\begin{equation*}
    f(y_i | \mathbf{x}_i ; \bm{\beta}, \alpha) = \frac{\Gamma(y_i + \alpha^{-1})}{\Gamma(y_i+1)\Gamma(\alpha^{-1})} \left(\frac{\alpha^{-1}}{\alpha^{-1}+\mu_i}\right)^{\alpha^{-1}} \left(\frac{\mu_i}{\alpha^{-1}+\mu_i}\right)^{y_i},
\end{equation*}
where $\Gamma(\cdot)$ is the gamma function. The (conditional) variance function is defined as $\var(Y_i\vert\x_i) \vcentcolon = \mu_i + \alpha \mu_i^2$, and therefore, increasing the value of $\alpha$ corresponds to increasing the overdispersion level. The limiting case when $\alpha$ goes to $0$ corresponds to the Poisson distribution. 

In general, it is known that the MLE, especially for the overdispersion parameter, can be seriously biased in finite sample (see e.g. \citealp{SaPa:05,heller2019beyond} and the reference therein). Since the overdispersion parameter is widely used as an important measure in medical, biological and environmental studies, we consider the BBC and the JINI estimators based on the MLE for bias correction, and compare their finite sample performance to the MLE. The MLE is computed using the \texttt{glm.nb} function (with default parameters) in the \texttt{MASS} R package. The covariates are simulated in the following way \citep[see e.g.][]{AeCaHe:14}: $x_{i1}$ is the intercept, $x_{i2}$ is simulated from a $\mathcal{N}(0, 1)$, $x_{i3}$ is a dummy variable with the first half of the values being zeros, and $x_{i4}, \ldots, x_{ip}$ are independently simulated from a $\mathcal{N}(0, 4^2/n)$. The simulation settings are presented in Table~\ref{tab:sim-negbin}.

Figure \ref{fig:sim-neg_bin-smr} presents the (absolute) bias and RMSE of the estimators. The boxplots of the finite sample distributions and the simulation-based approximation of the bias function of the initial estimator are presented in Figures \ref{fig:sim-neg_bin-bxp} and  \ref{fig:nb-bias1} respectively in Supplementary Materials~\ref{supp:boxplots} and \ref{supp:bias}. The bias function of the MLE appears to be locally linear, especially for the overdispersion parameter~$\alpha$, which explains the apparent unbiasedness and relatively low RMSE for all parameters of the JINI estimator in Figure~\ref{fig:sim-neg_bin-smr}. On the other hand, as expected, the MLE is significantly biased, especially on the overdispersion parameter. The BBC estimator also has a reduced bias, but the JINI estimator is (apparently) unbiased, hence better than the BBC estimator, without loss in terms of RMSE. 
\begin{table}[!tb]
     \centering
     \caption{Simulation setting for the negative binomial regression}
     \begin{tabular}{lr}
 \toprule
 Parameters &  Values\\
 \midrule
 $n=$ & $100$ \\
 $\beta_1=$ & $1.5$ \\
 $\beta_2=$ & $2.5$ \\
 $\beta_3=$ & $-2.5$ \\
 $\beta_4=\ldots=\beta_{20}=$ & $0$ \\
 $\alpha=$ & $0.6$ \\
 $H=$ & $200$ \\
 Number of simulations $=$ & $1000$ \\
 \bottomrule 
     \end{tabular}
     \label{tab:sim-negbin}
\end{table}
\begin{figure}[!tb]
     \centering
     \includegraphics[width=12cm]{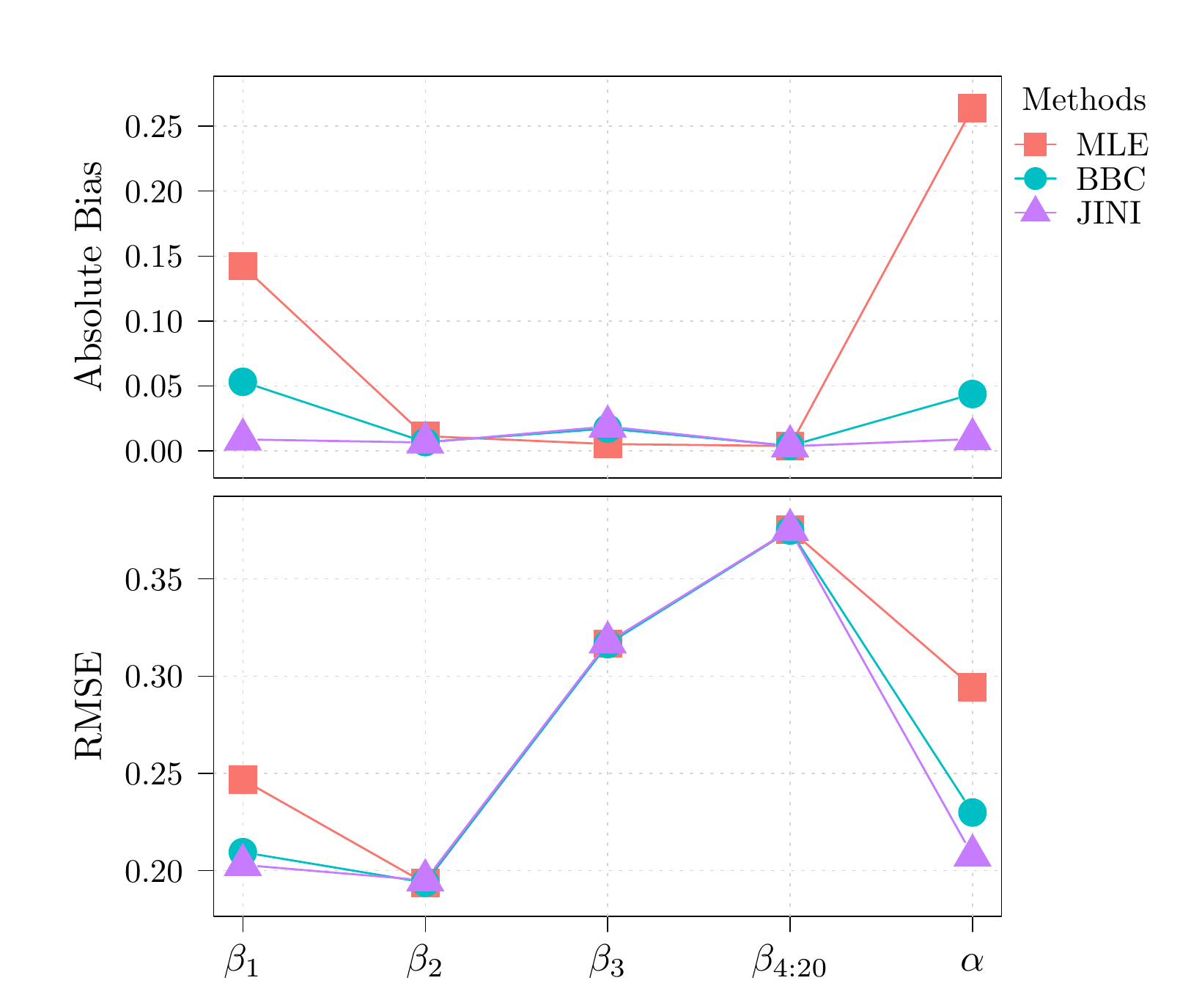}
     \caption{Finite sample absolute bias and RMSE of estimators for the negative binomial regression using the simulation setting presented in Table \ref{tab:sim-negbin}. The estimators are the MLE, the BBC and the JINI estimators based on the MLE. The value $\beta_{4:20}$ corresponds to the estimates of $\beta_4,\ldots,\beta_{20}$, whose true values are all zero.}
     \label{fig:sim-neg_bin-smr}
\end{figure}

\section{Applications with Inconsistent Initial Estimators}
\label{sec:app:inconsist}

Constructing consistent estimators can be challenging for various reasons. For example, data features such as censoring or misclassification errors can lead to analytical and computational challenges when implementing standard estimators such as the MLE. These difficulties are typically magnified when considering complex models and/or high dimensional settings. We propose to use the JINI estimator to alleviate these limitations by constructing consistent and bias corrected estimators based on an available initial estimator, such as the MLE that ignores these data features. The theoretical results presented in Sections~\ref{sec:bias} and \ref{sec:consist:asympnorm} suggest that such approach is applicable in these situations (as well as in other ones) where the asymptotic bias of the initial estimator is ``small'' (as discussed in Section~\ref{sec:bias:inconsist}). 

To illustrate the simplicity and generality of this approach, we consider different applications, including the censored negative binomial regression (Section~\ref{sec:NB-censoring}) and the logistic regression with misclassfied responses (Section~\ref{sec:logistic-misscla}). Additionally, we consider the censored Poisson regression in Supplementary Material~\ref{supp:cens-pois}. For each model, we construct the JINI estimator based on the MLE that ignores these data features, which is inconsistent but readily available. Our results suggest that the proposed JINI estimator outperforms the consistent MLE (when available) in terms of both bias and RMSE (see Section~\ref{sec:NB-censoring} and Supplementary Material~\ref{supp:cens-pois}). When no alternative consistent estimators are available, we observe that the proposed JINI estimator appears to enjoy from suitable finite sample properties (see Section~\ref{sec:logistic-misscla}). These results suggest that the JINI estimator allows to consider a broad set of initial estimators, possibly chosen for their computational advantages, to construct consistent and bias corrected estimators. Moreover, it is based on a simple approach that can be applied to cases where no alternative estimators are available due to the numerical and/or analytical challenges that they typically entail.

\subsection{Censored Negative Binomial Regression} 
\label{sec:NB-censoring}

An important extension of the negative binomial regression (see Section \ref{sec:app:consist}) is the censored negative binomial regression that accounts for censored responses.  It was first proposed in \cite{caudill1995modeling} and an alternative model in terms of discrete survival models was proposed in \cite{Hilb:11}. In this section, we take the case of right censoring above a known threshold $C$, and propose to construct the JINI estimator based on the classical MLE for negative binomial regression that ignores the censoring mechanism, which is inconsistent. Then, we compare the finite sample performance to the MLE for censored responses that is 
computed using the \texttt{gamlss} function in the \texttt{gamlss.cens} R package (see Chapter 12 of \citealp{Hilb:11} for more details). In this simulation study, we generate the covariates in the same manner as in Section \ref{sec:app:consist}, and the simulation settings are presented in Table~\ref{tab:sim-NB-censoring}.

\begin{table}[!tb]
     \centering
     \caption{Simulation setting for the censored negative binomial regression}
     \begin{tabular}{lr}
 \toprule
 Parameters &  Values\\
 \midrule
 $n=$ & $100$ \\
 $\beta_1=$ & $1.5$ \\
 $\beta_2=$ & $2.5$ \\
 $\beta_3=$ & $-2.5$ \\
 $\beta_4=\ldots=\beta_{20}=$ & $0$ \\
 $\alpha=$ & $0.6$ \\
 $C=$ & $30$ \\
 Censoring percentage $\approx$ & 11\%\\
 $H=$ & $200$ \\
 Number of simulations $=$ & $1000$ \\
 \bottomrule 
     \end{tabular}
     \label{tab:sim-NB-censoring}
\end{table}

We present in Figure~\ref{fig:NB-censoring-smr} the (absolute) bias and RMSE of the JINI estimator as well as the MLE for censored responses. We also provide the boxplots of their finite sample distributions and the simulation-based approximation of the bias function of the initial estimator (i.e. the classical MLE that ignores the censoring mechanism) in Figures \ref{fig:NB-censoring-bxp} and \ref{fig:nb-bias2} respectively in Supplementary Material~\ref{supp:boxplots} and \ref{supp:bias}. We observe from Figure~\ref{fig:nb-bias2} that the bias function of the initial estimator appears generally smooth  and locally linear, especially for the overdispersion parameter~$\alpha$, which explains the apparent unbiasedness of the JINI estimator for all parameters in Figure~\ref{fig:NB-censoring-smr}. On the other hand, the MLE shows significant bias (except for the parameters with true value of zero), especially for the overdispersion parameter, due to the relatively large $p/n$ ratio. In addition, the resulting RMSE of the JINI estimator appears to be significantly lower compared to the one of the MLE for all parameters. Finally, the JINI estimator avoids the numerical and/or analytical challenges typically entailed in the computation of the MLE and hence is simple to implement and readily available.

\begin{figure}[!tb]
    \centering
     \includegraphics[width=12cm]{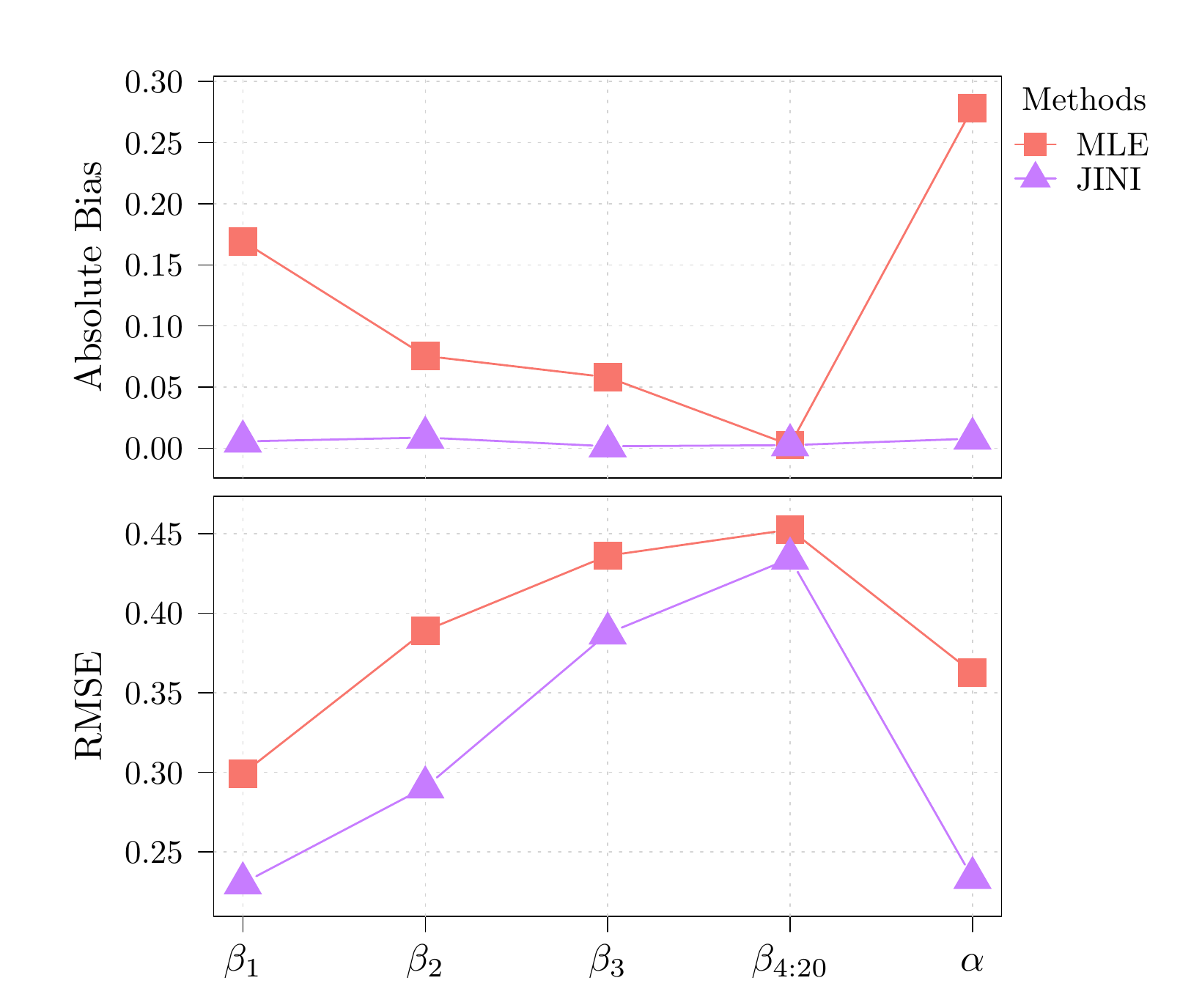}
     \caption{Finite sample absolute bias and RMSE of estimators for the censored negative binomial regression using the simulation settings presented in Table \ref{tab:sim-NB-censoring}. The estimators are the MLE for the censored negative binomial regression model, and the JINI estimator with, as initial estimator, the classical MLE for the negative binomial regression model that ignores the censoring. The estimators are grouped according to their parameter values.}
     \label{fig:NB-censoring-smr}
\end{figure}

\subsection{Logistic Regression with Misclassified Responses}
\label{sec:logistic-misscla}

Misclassification of binary response variables are frequently encountered, for example, in epidemiology studies \citep[see e.g.][and the references therein]{NiDaKaTaLe:19}, and in economics \citep[see e.g.][and the references therein]{MEYER2017}. Specifically, in these settings with a sample of $n$ independent subjects, the response is misclassified in the sense that the actual response variable $Y_i$ of the $i^{th}$ subject is not observed, but instead, we observe another binary variable $Z_i$ which is such that
\begin{equation*}
    P(Z_i = 0 | Y_i = 0) = 1-\tau_1 \quad \text{and} \quad P(Z_i = 1 | Y_i = 1) = 1-\tau_2.
\end{equation*}
The probabilities $\tau_1, \tau_2 \in [0,1]$ denote, respectively, the False Positive (FP) rate and the False Negative (FN) rate. Such misclassification introduces necessary modification on a classical logistic regression model (where misclassification is not considered). Indeed,  the probability of success for the $i^{th}$ response is given by 
\begin{equation}
\begin{aligned}
\label{eqn:logistic-misscla-fixed}
    \mu_i^*(\bm{\beta}) \vcentcolon &= \mathbb{E}[Z_i] = P(Z_i = 1 | Y_i = 0) P(Y_i = 0) + P(Z_i = 1 | Y_i = 1) P(Y_i = 1) \\
    &= \tau_1 \left\{1-\mu_i(\bm{\beta})\right\} + (1-\tau_2) \mu_i(\bm{\beta}),
\end{aligned}
\end{equation}
where ${\mu}_i(\bm{\beta})\vcentcolon = \mathbb{E}[Y_i\vert \x_i]  =\exp(\x_i\tt\bm{\beta})/\left\{1+\exp(\x_i\tt\boldsymbol{\beta})\right\}$, with $\x_i\in\real^p$ as a vector of fixed covariates for the $i^{th}$ subject and $\bm{\beta}$ as the regression coefficients. When $\tau_1 = \tau_2 = 0$, we have $\mu_i^*(\bm{\beta}) = \mu_i(\bm{\beta})$, that is, the classical logistic regression model is recovered. Often $\tau_1$ and $\tau_2$ are known or assumed to be known, for example, when the response is the measure of some medical testing device with known specificity $(1-\tau_1)$ and sensitivity $(1-\tau_2)$ \citep[see e.g.][]{Neuh:99,CarrollBook:2006}. \cite{Bouman2020} also considers FP and FN rates distributions derived from the Receiver Operating Characteristic (ROC) curve \citep{Swets:88} of the medical testing devices. Overall, these (empirical) distributions for quantitative serological test measurements are usually available. Nevertheless, the MLE based on the modified likelihood function is typically challenging to derive analytically and/or to compute numerically, especially in high dimensional settings.

In this section, we extend the results of \cite{ZwCa:93} for the logistic regression with misclassified responses, for which, to the best of our knowledge, no consistent estimators are available. We propose to construct the JINI estimator using the MLE of the classical logistic regression as the initial estimator, which is inconsistent in this case. More precisely, we extend the probability of success given in \eqref{eqn:logistic-misscla-fixed} to 
\begin{equation}
    \mu_i^*(\bm{\beta}) \vcentcolon = \mathbb{E}[Z_i | U_{\tau_1, i}, U_{\tau_2, i}] = U_{\tau_1, i}\left\{1-\mu_i(\bm{\beta})\right\} + (1-U_{\tau_2, i}) \mu_i(\bm{\beta}),
    \label{eqn:logistic-missclass}
\end{equation}
where $U_{\tau_1, i}$ and $U_{\tau_2, i}$ are two latent variables generated around the ROC curve. To compute the JINI estimator,  $U_{\tau_1, i}$ and $U_{\tau_2, i}$ are simulated only once and then held fixed throughout the steps of the IB algorithm (see Table~\ref{tab:ib:algo} in Appendix \ref{app:imple:ib}). We consider a ROC curve with an area under the curve of $99\%$, a FP rate of $\tau_1 \approx 2\%$ and a sensitivity of $1-\tau_2 \approx 90\%$. We then generate $U_{\tau_1, i}$ and $U_{\tau_2, i}$ centered at $(\tau_1,1-\tau_2)$ around the ROC curve, which is presented in Figure \ref{fig:roc} in Supplementary Material~\ref{supp:ROC}. 

\begin{table}[!tb]
     \centering
     \caption{Simulation settings for the logistic regression with misclassified responses}
     \begin{tabular}{lrr}
 \toprule
 Parameters &  Setting I & Setting II \\
 \midrule
 $n=$ & $2000$ & $3000$ \\
 $\sum_{i=1}^n y_i\approx$ & $1000$ & $750$ \\
  $\sum_{i=1}^n z_i\approx$ & $800$ &  $350$\\
 EPV $\approx$ & $4$ & $1.75$ \\
 $\beta_1=\beta_2=$ & $5$ & $5$ \\
 $\beta_3=\beta_4=$ & $-7$ & $-7$ \\
 $\beta_5=\ldots=\beta_{200}=$ & $0$ & $0$ \\
 $U_{\tau_1, i} \overset{iid}{\sim}$ & $\text{Beta}(2, 50)$ & $\text{Beta}(2, 50)$ \\
 $U_{\tau_2, i} \overset{iid}{\sim}$ & $\text{Beta}(2, 10)$ & $\text{Beta}(2, 10)$ \\
 FP rate $\approx$ & 2\% & 2\% \\
 FN rate $\approx$ & 10\% & 10\% \\
 $H=$ & $200$ & $200$ \\
 Number of simulations $=$ & $1000$ & $1000$ \\
 \bottomrule
     \end{tabular}
     \label{tab:sim-logistic-misscla}
\end{table}

In addition, for the arguably simpler model of the classical logistic regression (with no misclassification errors), a rule of thumb is typically to consider at least $10$ Events Per Variable (EPV) in order to ensure reliable finite sample performance of the MLE for the classical logistic regression (see e.g. \citealp{PeCoKeHoFe:96}). However, with an EPV smaller than $10$, a situation that is quite frequently encountered in practice, the MLE becomes significantly biased and the resulting inference becomes unreliable. More detailed discussions can be found in Supplementary Material~\ref{supp:logistic}. 

In this simulation study, we consider two high dimensional settings with balanced (Setting I) and unbalanced (Setting II) responses, both of which can possibly occur in practice. We consider large models with $p$ relatively large compared to $n$, and provide an EPV of approximately $4$ and $1.75$ respectively. We also generate the covariates independently from a $\mathcal{N}(0,4^2/n)$ for Setting I and from a $\mathcal{N}(0.6,4^2/n)$ for Setting II. The detailed simulation settings can be found in Table~\ref{tab:sim-logistic-misscla}. These settings would already be challenging, in terms of estimation accuracy, for the classical logistic regression without misclassification errors (see Supplementary Material~\ref{supp:logistic}), given the high dimensions and the relatively low EPV of the settings. We expect it to be more challenging when considering misclassification responses.

Since there is no alternative (consistent) available estimator, we present the finite sample distribution (boxplots) of  the JINI estimator in Figure~\ref{fig:logistic-misscla-bxp}, as well as the simulation-based approximation of the bias function of the initial estimator (i.e. the classical MLE for the logistic regression that ignores the misclassification errors) in Figure~\ref{fig:sero-bias} in Supplementary Material~\ref{supp:bias}. We can see from Figure~\ref{fig:sero-bias} that the bias function of the initial estimator appears linear in Setting I and locally linear in Setting~{II}. This explains the observations in Figure~\ref{fig:logistic-misscla-bxp} that the JINI estimator shows small (or apparently negligible) bias in both settings as suggested by the first result of Theorem~\ref{thm:bias:asymp}. Although there is no benchmark in this case, we can, for example, compare the finite sample performance of the JINI estimator with the consistent MLE (assuming no misclassification) in the same settings but without misclassified responses as done in Supplementary Material~\ref{supp:logistic}. By comparing Figures~\ref{fig:logistic-misscla-bxp} and \ref{fig:sim-logistic-bxp} (in Supplementary Material~\ref{supp:logistic}), it can be observed that the JINI estimator (in Figure~\ref{fig:logistic-misscla-bxp}) has comparable finite sample performance as the MLE (in Figure~\ref{fig:sim-logistic-bxp}). Therefore, it is reasonable to conclude that the JINI estimator would have, at least, similar performance to the MLE for the model with misclasification errors, while avoiding its computational challenges. These results highlight the advantage that the JINI estimator is an easily constructed consistent and bias corrected estimator which does not require analytical derivations and complex numerical implementations.  
\begin{figure}[!tb]
    \centering
     \includegraphics[width=12cm]{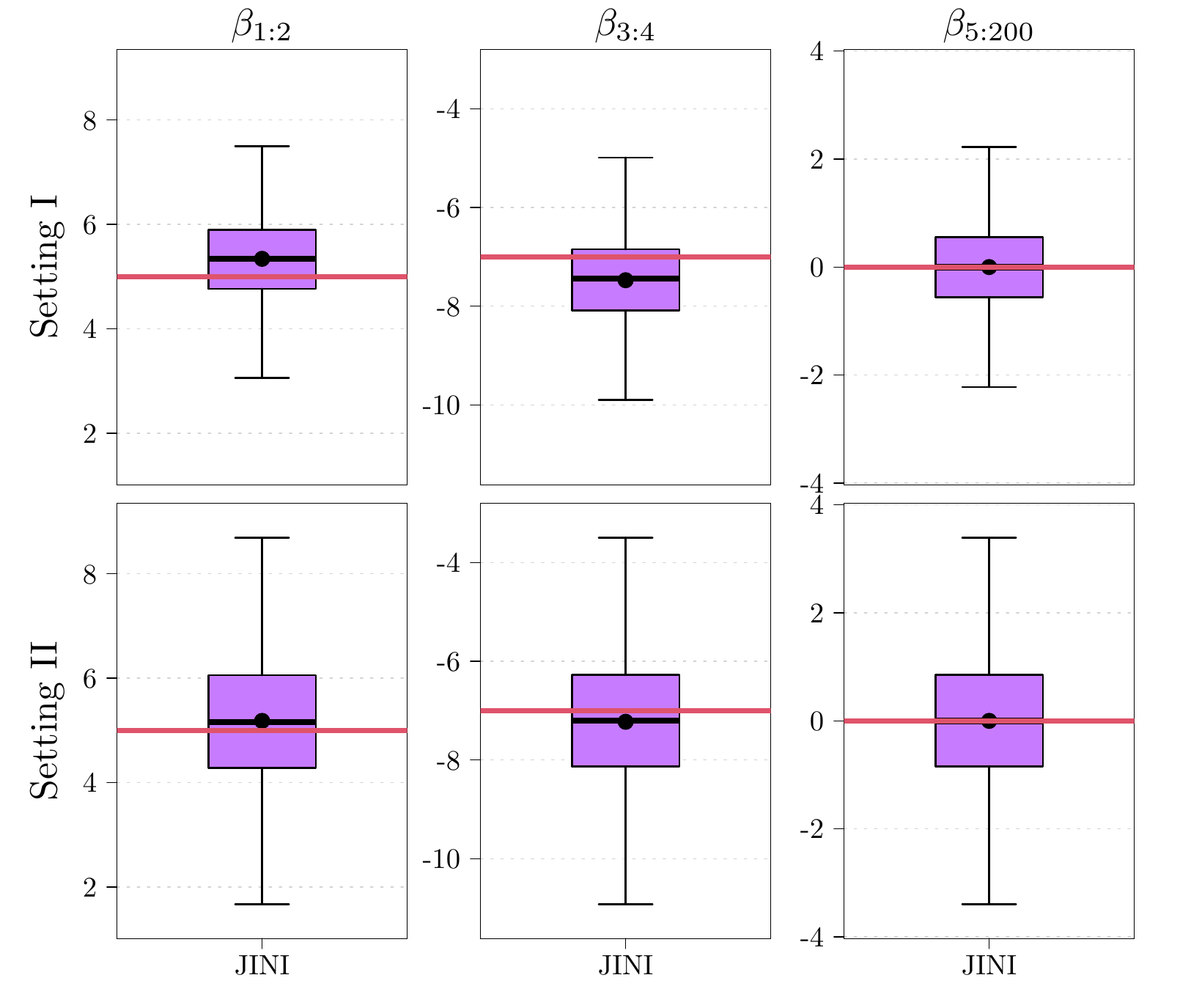}
     \caption{Finite sample distributions of the JINI estimator for the logistic regression with misclassified responses using the simulation settings presented in Table \ref{tab:sim-logistic-misscla}. The JINI estimator is computed based on the classical MLE for the  logistic regression that ignores the misclassification errors. The lines indicate the true parameter values and the dots locate the mean simulation values. The estimators are grouped according to their parameter values.} 
     \label{fig:logistic-misscla-bxp}
\end{figure}

\clearpage
\bibliographystyle{chicago}
\newpage

\bibliography{biblio.bib}


\newpage
\appendix
\centerline{\Large\sc Appendices}

\titleformat{\section}[hang]{\large\scshape}{\Alph{section}}{1em}{}

\section{Proof of Theorem \ref{thm:bias:finite}}
\label{app:proof:JINI:bias:finite}

In order to prove Theorem \ref{thm:bias:finite}, we introduce the following lemma.

\begin{Lemma}
\label{lem:1}
Let $\|\cdot\|$ be any sub-multiplicative matrix norm on $\real^{p \times p}$.
\vspace{-0.5cm}
\begin{enumerate}
    \item If $\mathbf{B}(n) \in \real^{p \times p}$ is such that $\|\mathbf{B}(n)\| = \mathcal{O}(n^{-\beta})$ with $\beta>0$, then we have $\{\mathbf{I} - \mathbf{B}(n)\}^{-1} = \sum_{k=0}^\infty \B(n)^k$ and $\|\{\mathbf{I} - \mathbf{B}(n)\}^{-1} \| = \mathcal{O}(1)$ for sufficiently large $n$. 
    \item If $\mathbf{A} \in \real^{p \times p}$ is such that $0 < \|\mathbf{A}\| < 1$, then $\left(\mathbf{I} - \mathbf{A}\right)^{-1} = \sum_{k=0}^\infty \mathbf{A}^k$ and
$\|(\mathbf{I} - \mathbf{A})^{-1} \| = \mathcal{O}(1)$. 
\end{enumerate} 
\end{Lemma}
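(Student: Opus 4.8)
The plan is to treat part 2 as the substantive statement --- a standard Neumann series argument --- and then obtain part 1 as an immediate specialization, since $\|\mathbf{B}(n)\| = \mathcal{O}(n^{-\beta}) \to 0$ guarantees $\|\mathbf{B}(n)\| < 1$ for all sufficiently large $n$, which is precisely the hypothesis of part 2.

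For part 2, the key observation is submultiplicativity of the norm, giving $\|\mathbf{A}^k\| \leq \|\mathbf{A}\|^k$ for every $k \geq 0$. Setting $S_N \vcentcolon= \sum_{k=0}^N \mathbf{A}^k$, I would first show that $\{S_N\}$ is Cauchy in $(\real^{p\times p}, \|\cdot\|)$: for $M > N$ we have $\|S_M - S_N\| \leq \sum_{k=N+1}^M \|\mathbf{A}\|^k$, a tail of the geometric series $\sum_k \|\mathbf{A}\|^k$, which converges because $\|\mathbf{A}\| < 1$ and hence has tails tending to $0$. Since $\real^{p\times p}$ is finite-dimensional and thus complete under any norm, $S_N$ converges to a limit $S \vcentcolon= \sum_{k=0}^\infty \mathbf{A}^k$.

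Next I would identify $S$ as the inverse by telescoping. The partial sums satisfy $(\mathbf{I} - \mathbf{A}) S_N = \mathbf{I} - \mathbf{A}^{N+1}$, and $\|\mathbf{A}^{N+1}\| \leq \|\mathbf{A}\|^{N+1} \to 0$, so letting $N \to \infty$ gives $(\mathbf{I} - \mathbf{A}) S = \mathbf{I}$; the symmetric computation yields $S (\mathbf{I} - \mathbf{A}) = \mathbf{I}$. Hence $\mathbf{I} - \mathbf{A}$ is nonsingular with $(\mathbf{I} - \mathbf{A})^{-1} = S$. The norm bound then follows from the triangle inequality together with the geometric sum, $\|S\| \leq \sum_{k=0}^\infty \|\mathbf{A}\|^k = (1 - \|\mathbf{A}\|)^{-1} = \mathcal{O}(1)$. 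For part 1, because $\|\mathbf{B}(n)\| = \mathcal{O}(n^{-\beta})$ with $\beta > 0$, there exists $n_0$ such that $\|\mathbf{B}(n)\| < 1$ for all $n \geq n_0$; applying part 2 with $\mathbf{A} = \mathbf{B}(n)$ delivers both the series representation and the bound $\|\{\mathbf{I} - \mathbf{B}(n)\}^{-1}\| \leq (1 - \|\mathbf{B}(n)\|)^{-1}$. Since $\|\mathbf{B}(n)\| \to 0$, this bound tends to $1$ and is therefore $\mathcal{O}(1)$ for sufficiently large $n$.

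I do not anticipate a genuine obstacle, as the result is a textbook Neumann series fact; the only points warranting care are justifying convergence of the matrix series in the abstract submultiplicative norm (handled by completeness of the finite-dimensional space together with the geometric comparison) and pinning down the ``sufficiently large $n$'' threshold in part 1, which is exactly where $\mathcal{O}(n^{-\beta}) \to 0$ enters to force $\|\mathbf{B}(n)\| < 1$.
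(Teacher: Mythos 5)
Your proof is correct and uses essentially the same argument as the paper: a Neumann series controlled by the geometric series via submultiplicativity, with the telescoping identity $(\mathbf{I}-\mathbf{A})S_N = \mathbf{I}-\mathbf{A}^{N+1}$ identifying the limit as the inverse and yielding the bound $(1-\|\mathbf{A}\|)^{-1} = \mathcal{O}(1)$. The only difference is organizational: you prove part 2 first and obtain part 1 by specialization (noting $\|\mathbf{B}(n)\|<1$ for large $n$), whereas the paper proves part 1 directly with explicit constants ($\|\mathbf{B}(n)\|\leq Cn^{-\beta}$, bound $(1-Cn^{-\beta})^{-1}$) and remarks that part 2 follows by similar arguments --- your ordering is arguably the cleaner deduction, but it is not a different mathematical route.
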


\begin{proof}
We start by proving the first part of the lemma. Since $\|\mathbf{B}(n)\| = \mathcal{O}(n^{-\beta})$, 
there exist $N>0$ and $C>0$ such that $\|\mathbf{B}(n)\| \leq Cn^{-\beta}$
for all $n \geq N$. Without loss of generality, we consider $CN^{-\beta}<1$. Therefore, $\| \mathbf{B}(n)^k \| \leq \| \mathbf{B}(n)\| ^k\leq(Cn^{-\beta})^k\to 0$ when $k\to\infty$ for any $n\geq N$. Hence for any $n\geq N$, we have
$\mathbf{B}(n)^k\to \0$ when $k\to\infty$ and 
\begin{equation}
\label{eq:appA:5}
   \left\|\sum_{k=0}^\infty\mathbf{B}(n)^k\right\| \le \sum_{k=0}^\infty\|\mathbf{B}(n)\|^k \le \sum_{k=0}^\infty(Cn^{-\beta})^k=(1-Cn^{-\beta})^{-1}=\mathcal{O}(1).
\end{equation}
Now we want to prove that $\mathbf{I}-\mathbf{B}(n)$ is nonsingular for sufficiently large $n$. Indeed, note that for all $m \geq 0$, we have 
\begin{equation*}
   \{\mathbf{I}-\mathbf{B}(n)\} \left\{\sum_{k=0}^m\mathbf{B}(n)^k\right\}= \left\{\sum_{k=0}^m\mathbf{B}(n)^k\right\}\{\mathbf{I}-\mathbf{B}(n)\}= \mathbf{I} - \mathbf{B}(n)^{m+1},
\end{equation*}
and therefore, let $m \to \infty$ and for sufficiently large $n$, we have 
\begin{equation*}
   \{\mathbf{I}-\mathbf{B}(n)\} \left\{\sum_{k=0}^\infty\mathbf{B}(n)^k\right\}=  \left\{\sum_{k=0}^\infty\mathbf{B}(n)^k\right\}\{\mathbf{I}-\mathbf{B}(n)\}= \mathbf{I},
\end{equation*}
which defines that 
\begin{equation*}
    \{\mathbf{I} - \mathbf{B}(n)\}^{-1} = \sum_{k=0}^\infty \mathbf{B}(n)^k.
\end{equation*}
Finally, we have $\left\|\{\mathbf{I} - \mathbf{B}(n)\}^{-1}\right\| = 
   \Big\|\sum_{k=0}^\infty\mathbf{B}(n)^k\Big\| = \mathcal{O}(1)$ by \eqref{eq:appA:5}. The second part of the lemma can be derived using the similar arguments.
\end{proof}
\vspace{0.5cm}

We now start to prove Theorem \ref{thm:bias:finite}. When the initial estimator is consistent (i.e. $\mathbf{a}(\bt)=\0$), by definition in \eqref{eqn:def:JINI}, the JINI estimator satisfies 
\begin{equation}
\begin{aligned}
\label{eq:appA:1}
    \widehat{\bpi}(\bt_0, n) &= \bt_0 + \mathbf{b}(\bt_0,n) + \mathbf{v}(\bt_0, n) = \bpi^*(\widehat{\bt}, n) = \frac{1}{H}\sum_{h=1}^H \widehat{\bpi}_h(\widehat{\bt},n) \\
    &= \widehat{\bt} + \mathbf{b}(\widehat{\bt}, n) + \frac{1}{H}\sum_{h=1}^H \mathbf{v}_h(\widehat{\bt}, n),
\end{aligned}
\end{equation}
with $\mathbf{v}_h(\widehat{\bt}, n)$ corresponding to the noise of the $h^{th}$ simulated sample. By rearranging the terms and taking expectations on both sides of \eqref{eq:appA:1}, we obtain
\begin{equation}
\label{eq:appA:2}
    \mathbb{E}[\widehat{\bt}] - \bt_0 = \mathbf{b}(\bt_0, n) - \mathbb{E} \left[\mathbf{b}(\widehat{\bt}, n)\right].
\end{equation}
In the following, we prove Theorem \ref{thm:bias:finite} from the fourth result to the first result. 

\subsection{Proof of the Fourth Result}
\label{app:proof:JINI:bias:finite:4}

By the mean value theorem and under Assumption \ref{assum:C}, we have, from \eqref{eq:appA:2}, that
%
%
\begin{equation*}
\begin{aligned}
    \left\|\mathbb{E}[\widehat{\bt}] - \bt_0\right\|_\infty = \left\| \mathbb{E}\left[\mathbf{b}(\bt_0,n) - \mathbf{b}(\widehat{\bt}, n)\right] \right\|_\infty \leq \mathbb{E} \left[\left\| \mathbf{B}(\bt^*,n) \right\|_\infty \| \widehat{\bt}-\bt_0 \|_\infty\right],
\end{aligned}
\end{equation*}
where $\bt^* \in \bT$ is on the line connecting $\widehat{\bt}$ and $\bt_0$. By Assumption \ref{assum:C}, we have $\left\| \mathbf{B}(\bt^*,n) \right\|_\infty = \mathcal{O}_{\rm p}(n^{-\beta})$. Moreover, by \eqref{eq:appA:1}, we have
\begin{equation}
\label{eq:appA:9}
     \left\|\widehat{\bt} - \bt_0 \right\|_\infty \leq  \left\|\mathbf{b}(\bt_0, n) - \mathbf{b}(\widehat{\bt},n)\right\|_\infty + \left\|\mathbf{v}(\bt_0, n) - \frac{1}{H} \sum_{h=1}^H \mathbf{v}_h(\widehat{\bt},n) \right\|_\infty = \mathcal{O}_{\rm p}(n^{-\alpha}),
\end{equation}
hence $\left\| \mathbf{B}(\bt^*,n) \right\|_\infty \| \widehat{\bt}-\bt_0 \|_\infty = \mathcal{O}_{\rm p}(n^{-(\alpha+\beta)})$ . Therefore, we have 
\begin{equation*}
    \left\|\mathbb{E}[\widehat{\bt}] - \bt_0\right\|_\infty \leq \mathbb{E} \left[\left\| \mathbf{B}(\bt^*,n) \right\|_\infty \| \widehat{\bt}-\bt_0 \|_\infty\right] = \mathcal{O}(n^{-(\alpha+\beta)}),
\end{equation*}
which leads to
\begin{equation}
\label{eq:appA:14}
    \left\|\mathbb{E}[\widehat{\bt}] - \bt_0\right\|_2 = \mathcal{O}(p^{1/2}n^{-(\alpha + \beta)}). 
\end{equation}
\hfill $\square$

\subsection{Proof of the Third Result}
\label{app:proof:JINI:bias:finite:3}

Under Assumption \ref{assum:C3}, we obtain from \eqref{eq:appA:2} that 
\begin{equation*}
    \mathbb{E}[\widehat{\bt}] -\bt_0 = -\mathbf{B}(n) \left\{\mathbb{E}[\widehat{\bt}] -\bt_0\right\} + \mathbf{r}(\bt_0, n) - \mathbb{E}\left[\mathbf{r}(\widehat{\bt}, n)\right].
\end{equation*}
By rearranging the terms, we have
\begin{equation*}
    \mathbb{E}[\widehat{\bt}] - \bt_0 = \left\{\mathbf{I} + \mathbf{B}(n)\right\}^{-1} \left\{\mathbf{r}(\bt_0,n) - \mathbb{E}\left[\mathbf{r}(\widehat{\bt},n) \right] \right\}.
\end{equation*}
Moreover, by Lemma \ref{lem:1} and under Assumption \ref{assum:C3}, for sufficiently large $n$, we have 
\begin{equation*}
    \left\|\mathbb{E}[\widehat{\bt}] - \bt_0\right\|_\infty \leq \left\|\left\{\mathbf{I} + \mathbf{B}(n)\right\}^{-1}\right\|_\infty \left\|\mathbf{r}(\bt_0,n) - \mathbb{E}\left[\mathbf{r}(\widehat{\bt},n) \right] \right\|_\infty = \mathcal{O}(n^{-\beta_3}),
\end{equation*}
and therefore,
\begin{equation}
\label{eq:appA:13}
    \left\|\mathbb{E}[\widehat{\bt}] - \bt_0\right\|_2 = \mathcal{O}(p^{1/2}n^{-\beta_3}).
\end{equation}
Since Assumption \ref{assum:C3} implies Assumption \ref{assum:C}, we can incorporate the result of \eqref{eq:appA:14} under Assumption \ref{assum:C} to the current result of \eqref{eq:appA:13} under Assumption \ref{assum:C3}, i.e. we have 
\begin{equation}
\label{eq:appA:15}
    \left\|\mathbb{E}[\widehat{\bt}] - \bt_0\right\|_2 = \mathcal{O}\left(p^{1/2} n^{-\gamma_3}\right), 
\end{equation}
where $\gamma_3 \vcentcolon = \max(\beta_3, \alpha+\beta)$.

\hfill $\square$

\subsection{Proof of the Second Result}
\label{app:proof:JINI:bias:finite:2}

Similarly to Section \ref{app:proof:JINI:bias:finite:3}, under Assumption \ref{assum:C2}, we obtain from \eqref{eq:appA:2} that
\begin{equation*}
    \mathbb{E}[\widehat{\bt}] -\bt_0 = -\mathbf{B}(n) \left\{\mathbb{E}[\widehat{\bt}] -\bt_0\right\} + \mathbf{r}(\bt_0, n) - \mathbb{E}\left[\mathbf{r}(\widehat{\bt}, n)\right],
\end{equation*}
and therefore,
\begin{equation}
\label{eq:appA:6}
    \mathbb{E}[\widehat{\bt}] - \bt_0 = \left\{\mathbf{I} + \mathbf{B}(n)\right\}^{-1} \left\{\mathbf{r}(\bt_0,n) - \mathbb{E}\left[\mathbf{r}(\widehat{\bt},n) \right] \right\}.
\end{equation}
Under Assumptions \ref{assum:A} and \ref{assum:C2}, we have
\begin{equation*}
    \left\| \mathbb{E}\left[\mathbf{r}(\widehat{\bt},n)\right] \right\|_\infty \leq \underset{i=1,\ldots,p}{\max} \sum_{j=1}^p \sum_{k=1}^p |r_{ijk}| \cdot n^{-\beta_3} \cdot  \left|\mathbb{E}[\widehat{\theta}_j \widehat{\theta}_k]\right| + \left\|\mathbb{E}\left[\mathbf{e}(\widehat{\bt}, n)\right]\right\|_\infty = \mathcal{O}(n^{-\beta_3}),
\end{equation*}
where $\widehat{\theta}_j$ denotes the $j^{th}$ element of $\widehat{\bt}$. Similarly, $\left\|\mathbf{r}(\bt_0, n)\right\|_\infty = \mathcal{O}(n^{-\beta_3})$. Plugging the result to \eqref{eq:appA:6}, by Lemma \ref{lem:1} for sufficiently large $n$, we obtain
\begin{equation}
\label{eq:appA:7}
    \left\|\mathbb{E}[\widehat{\bt}] - \bt_0\right\|_\infty \leq \left\|\left\{\mathbf{I} + \mathbf{B}(n)\right\}^{-1}\right\|_\infty \left\|\mathbf{r}(\bt_0,n) - \mathbb{E}\left[\mathbf{r}(\widehat{\bt},n) \right] \right\|_\infty = \mathcal{O}(n^{-\beta_3}).
\end{equation}
We now further refine the order of $\mathbf{r}(\bt_0,n) - \mathbb{E}\left[\mathbf{r}(\widehat{\bt},n) \right]$. Using \eqref{eq:appA:7}, we obtain
\begin{equation}
\label{eq:appA:8}
    \mathbb{E}[\widehat{\theta}_j] \mathbb{E}[\widehat{\theta}_k] = \{\theta_j + \mathcal{O}(n^{-\beta_3})\} \{\theta_k + \mathcal{O}(n^{-\beta_3})\} = \theta_j\theta_k + \mathcal{O}(n^{-\beta_3}),
\end{equation}
for all $j,k=1,\ldots,p$, where $\theta_j$ denotes the $j^{th}$ element of $\bt_0$. Moreover, recall from \eqref{eq:appA:9} that we have $\left\|\widehat{\bt} - \bt_0 \right\|_\infty = \mathcal{O}_{\rm p}(n^{-\alpha})$, and thus
\begin{equation}
\begin{aligned}
\label{eq:appA:10}
    \var(\widehat{\theta_i}) &= \mathbb{E}\left[\left\{\widehat{\theta}_i - \mathbb{E}[\widehat{\theta}_i]\right\}^2\right] = \mathbb{E}\left[\left\{\widehat{\theta}_i - \theta_i - \left(\mathbb{E}[\widehat{\theta}_i] - \theta_i\right)\right\}^2\right] \\
    &= \mathbb{E}\left[(\widehat{\theta}_i - \theta_i)^2 - 2(\widehat{\theta}_i - \theta_i)\left\{\mathbb{E}[\widehat{\theta}_i]-\theta_i\right\} + \left\{\mathbb{E}[\widehat{\theta}_i]-\theta_i\right\}^2\right] \\
    &= \mathbb{E}\left[(\widehat{\theta}_i - \theta_i)^2\right] - \left\{\mathbb{E}[\widehat{\theta}_i]-\theta_i\right\}^2 \\
    &= \mathcal{O}(n^{-2\alpha}) + \mathcal{O}(n^{-2\beta_3}) \\
    &= \mathcal{O}(n^{-2\alpha}),
\end{aligned}
\end{equation}
for all $i=1,\ldots,p$. Using the results in \eqref{eq:appA:8} and \eqref{eq:appA:10}, we have
\begin{equation}
\begin{aligned}
\label{eq:appA:11}
    \left|\theta_j\theta_k - \mathbb{E}[\widehat{\theta}_j \widehat{\theta}_k]\right| &= \left| \theta_j \theta_k - \mathbb{E}[\widehat{\theta}_j] \mathbb{E}[\widehat{\theta}_k] -\cov(\widehat{\theta}_j, \widehat{\theta}_k)\right| \\
    &\leq \left| \theta_j \theta_k - \mathbb{E}[\widehat{\theta}_j] \mathbb{E}[\widehat{\theta}_k] \right| + \left| \cov(\widehat{\theta}_j, \widehat{\theta}_k)\right|
    &= \mathcal{O}(n^{-\min(2\alpha, \beta_3)}),
\end{aligned}
\end{equation}
for all $j,k=1,\ldots,p$, since $\left| \cov(\widehat{\theta}_j, \widehat{\theta}_k)\right| \leq \sqrt{ \var(\widehat{\theta}_j)\var(\widehat{\theta}_k) } = \mathcal{O}(n^{-2\alpha})$. Define $\mathbf{u} \vcentcolon = [u_1 \ldots u_p]\tt$, where
\begin{equation*}
    u_i \vcentcolon = \sum_{j=1}^p \sum_{k=1}^p r_{ijk} n^{-\beta_3} \left(\theta_j \theta_k - \mathbb{E}[\widehat{\theta}_j \widehat{\theta}_k]\right),
\end{equation*}
then under Assumption \ref{assum:C2} and using the result in \eqref{eq:appA:11}, we obtain
\begin{equation*}
\begin{aligned}
    \left\| \mathbf{r}(\bt_0,n) - \mathbb{E}\left[\mathbf{r}(\widehat{\bt},n)\right]\right\|_\infty &= \left\|\mathbf{u} + \mathbf{e}(\bt_0, n) - \mathbb{E}\left[\mathbf{e}(\widehat{\bt},n)\right]\right\|_\infty \\
    &\leq \|\mathbf{u}\|_\infty + \left\|\mathbf{e}(\bt_0, n) - \mathbb{E}\left[\mathbf{e}(\widehat{\bt},n)\right]\right\|_\infty \\
    &\leq \underset{i=1,\ldots,p}{\max}\sum_{j=1}^p \sum_{k=1}^p |r_{ijk}| \cdot n^{-\beta_3} \cdot \left|\theta_j \theta_k - \mathbb{E}[\widehat{\theta}_j \widehat{\theta}_k]\right| +  \mathcal{O}(n^{-\beta_4}), \\
    &= \mathcal{O}(n^{-\min(2\alpha+\beta_3, 2\beta_3, \beta_4)}). \\
\end{aligned}
\end{equation*}
Re-evaluating \eqref{eq:appA:7}, for sufficiently large $n$, we have 
\begin{equation*}
    \left\|\mathbb{E}[\widehat{\bt}] - \bt_0\right\|_\infty \leq \left\|\left\{\mathbf{I} + \mathbf{B}(n)\right\}^{-1}\right\|_\infty \left\|\mathbf{r}(\bt_0,n) - \mathbb{E}\left[\mathbf{r}(\widehat{\bt},n) \right] \right\|_\infty = \mathcal{O}(n^{-\min(2\alpha+\beta_3, 2\beta_3, \beta_4)}),
\end{equation*}
and therefore,
\begin{equation}
\label{eq:appA:12}
    \left\|\mathbb{E}[\widehat{\bt}] - \bt_0\right\|_2 = \mathcal{O}(p^{1/2} n^{-\min(2\alpha+\beta_3, 2\beta_3, \beta_4)}).
\end{equation}

Since Assumption \ref{assum:C2} implies Assumption \ref{assum:C3}, we can incorporate the result of \eqref{eq:appA:15} under Assumption \ref{assum:C3} to the current result of \eqref{eq:appA:12} under Assumption \ref{assum:C2}, i.e. we have 
\begin{equation*}
    \left\|\mathbb{E}[\widehat{\bt}] - \bt_0\right\|_2 = \mathcal{O}\left(p^{1/2} n^{-\gamma_2}\right), 
\end{equation*}
where $\gamma_2 \vcentcolon = \max\left\{\min(2\alpha+\beta_3, 2\beta_3, \beta_4), \gamma_3 \right\} = \max\left\{\min(2\alpha+\beta_3, 2\beta_3, \beta_4), \beta_3, \alpha + \beta \right\}$.

Moreover, under Assumption \ref{assum:C2}, we know that $\beta_4 > \beta_3$ and thus, 
\begin{equation*}
    \gamma_2 = \max\{\min(2\alpha+\beta_3, 2\beta_3, \beta_4), \alpha+\beta\}.
\end{equation*}

\hfill $\square$

\subsection{Proof of the First Result}
\label{app:proof:JINI:bias:finite:1}

Under Assumption \ref{assum:C1}, \eqref{eq:appA:2} can be re-evaluated as 
\begin{equation*}
    \mathbb{E}[\widehat{\bt}] - \bt_0 = -\mathbf{B}(n)\left\{\mathbb{E}[\widehat{\bt}] - \bt_0\right\},
\end{equation*}
or equivalently,
\begin{equation*}
    \left\{\mathbf{I}+\mathbf{B}(n)\right\} \left\{\mathbb{E}[\widehat{\bt}] - \bt_0\right\} = \0.
\end{equation*}
Since under Assumption \ref{assum:C1} that $\|\mathbf{B}(n)\|_\infty = \mathcal{O}(n^{-\beta_1})$, from Lemma \ref{lem:1} we have $\left\{\mathbf{I}+\mathbf{B}(n)\right\}$ is nonsingular for sufficiently large $n$, and therefore, $\mathbb{E}[\widehat{\bt}] - \bt_0 = \0$ for sufficiently large $n$.

\hfill $\square$

This completes the proof of Theorem \ref{thm:bias:finite}, and obviously, we have $\gamma_2 \geq \gamma_3 \geq \alpha + \beta$.

\hfill $\square$

\section{Proof of Proposition \ref{prop:bias:finite}}
\label{app:proof:BBC:bias:finite}

By definition in \eqref{eqn:def:BBC}, the BBC satisfies
\begin{equation}
\begin{aligned}
\label{eq:appB:1}
    \widetilde{\bt} &= 2\widehat{\bpi}(\bt_0, n) - \frac{1}{H}\sum_{h=1}^H \widehat{\bpi}_h\left\{\widehat{\bpi}(\bt_0,n), n\right\} \\
    &= \widehat{\bpi}(\bt_0, n) + \bt_0 + \mathbf{b}(\bt_0,n) + \mathbf{v}(\bt_0,n) 
    \\
    & \hspace{0.5cm} - \left[\widehat{\bpi}(\bt_0, n) + \mathbf{b}\left\{\widehat{\bpi}(\bt_0, n),n\right\} + \frac{1}{H}\sum_{h=1}^H \mathbf{v}_h\left\{\widehat{\bpi}(\bt_0, n), n\right\}\right] \\
    &= \bt_0 + \mathbf{b}(\bt_0,n) - \mathbf{b}\left\{\widehat{\bpi}(\bt_0, n),n\right\} + \mathbf{v}(\bt_0,n) - \frac{1}{H}\sum_{h=1}^H \mathbf{v}_h\left\{\widehat{\bpi}(\bt_0, n), n\right\}.
\end{aligned}
\end{equation}
By rearranging the terms and taking expectations on both sides of \eqref{eq:appB:1}, we obtain
\begin{equation}
\label{eq:appB:2}
    \mathbb{E}[\widetilde{\bt}] - \bt_0 = \mathbf{b}(\bt_0,n) - \mathbb{E}\left[\mathbf{b}\left\{\widehat{\bpi}(\bt_0, n),n\right\}\right].
\end{equation}

\subsection{Proof of the Fourth Result}
\label{app:proof:BBC:bias:finite:4}

By the mean value theorem and under Assumption \ref{assum:C}, we have, from \eqref{eq:appB:2}, that
\begin{equation*}
   \left\|\mathbb{E}[\widetilde{\bt}] - \bt_0\right\|_\infty = \left\|\mathbb{E}\left[\mathbf{b}(\bt_0,n) - \mathbf{b}\left\{\widehat{\bpi}(\bt_0, n),n\right\}\right]\right\|_\infty \leq \mathbb{E}\left[\left\|\mathbf{B}(\bpi^*,n)\right\|_\infty \left\|\widehat{\bpi}(\bt_0, n) - \bt_0\right\|_\infty\right],
\end{equation*}
where $\bpi^* \in \bT$ is on the line connecting $\widehat{\bpi}(\bt_0, n)$ and $\bt_0$. By Assumption \ref{assum:C}, we have $\left\|\mathbf{B}(\bpi^*,n)\right\|_\infty = \mathcal{O}_{\rm p}(n^{-\beta})$. Moreover, we have 
\begin{equation}
\label{eq:appB:5}
    \left\|\widehat{\bpi}(\bt_0, n) - \bt_0\right\|_\infty = \left\|\mathbf{b}(\bt_0, n) + \mathbf{v}(\bt_0,n)\right\|_\infty \leq \left\|\mathbf{b}(\bt_0, n)\right\|_\infty + \left\| \mathbf{v}(\bt_0,n)\right\|_\infty = \mathcal{O}_{\rm p}(n^{-\alpha}),
\end{equation}
and thus, $\left\|\mathbf{B}(\bpi^*,n)\right\|_\infty \left\|\widehat{\bpi}(\bt_0, n) - \bt_0\right\|_\infty = \mathcal{O}_{\rm p}(n^{-(\alpha+\beta)})$. Therefore, we have 
\begin{equation*}
    \left\|\mathbb{E}[\widetilde{\bt}] - \bt_0\right\|_\infty \leq \mathbb{E}\left[\left\|\mathbf{B}(\bpi^*,n)\right\|_\infty \left\|\widehat{\bpi}(\bt_0, n) - \bt_0\right\|_\infty\right] = \mathcal{O}(n^{-(\alpha+\beta)}),
\end{equation*}
which leads to 
\begin{equation}
\label{eq:appB:fourth}
    \left\|\mathbb{E}[\widetilde{\bt}] - \bt_0\right\|_2 = \mathcal{O}(p^{1/2}n^{-(\alpha + \beta)}).
\end{equation}

\hfill $\square$

\subsection{Proof of the Third Result}
\label{app:proof:BBC:bias:finite:3}
Under Assumption \ref{assum:C3}, we obtain from \eqref{eq:appB:2} that 
\begin{equation*}
\begin{aligned}
    \mathbb{E}[\widetilde{\bt}] - \bt_0 &= \mathbf{B}(n)\bt_0+\mathbf{c}(n)+\mathbf{r}(\bt_0,n) - \mathbb{E}\left[\mathbf{B}(n)\widehat{\bpi}(\bt_0, n) + \mathbf{c}(n) + \mathbf{r}\left\{\widehat{\bpi}(\bt_0,n),n\right\}\right]\\
    &= -\mathbf{B}(n)\mathbf{b}(\bt_0,n) + \mathbf{r}(\bt_0,n) - \mathbb{E}\left[\mathbf{r}\left\{\widehat{\bpi}(\bt_0,n),n\right\}\right],
\end{aligned}
\end{equation*}
and therefore, 
\begin{equation*}
\begin{aligned}
     \left\|\mathbb{E}[\widetilde{\bt}] - \bt_0\right\|_\infty &\leq \left\|\mathbf{B}(n)\right\|_\infty \left\|\mathbf{b}(\bt_0,n)\right\|_\infty + \left\|\mathbf{r}(\bt_0,n) - \mathbb{E}\left[\mathbf{r}\left\{\widehat{\bpi}(\bt_0,n),n\right\}\right]\right\|_\infty 
     \\
        &= \mathcal{O}(n^{-\min(\beta+\beta_1, \beta_3)}),
\end{aligned}
\end{equation*}
which leads to 
\begin{equation}
\label{eq:appB:third}
    \left\|\mathbb{E}[\widetilde{\bt}] - \bt_0\right\|_2 = \mathcal{O}(p^{1/2}n^{-\min(\beta+\beta_1, \beta_3)}).
\end{equation}

Since Assumption \ref{assum:C3} implies Assumption \ref{assum:C}, we can incorporate the result of \eqref{eq:appB:fourth} under Assumption \ref{assum:C} to the current result of \eqref{eq:appB:third} under Assumption \ref{assum:C3}, i.e. we have 
\begin{equation}
\label{eq:appB:third:final}
    \left\|\mathbb{E}[\widetilde{\bt}] - \bt_0\right\|_2 = \mathcal{O}(p^{1/2}n^{-\upsilon_3}), 
\end{equation}
where $\upsilon_3 \vcentcolon = \max \left\{\min(\beta+\beta_1, \beta_3), \alpha+\beta \right\}$. 

\hfill $\square$

\subsection{Proof of the Second Result}
\label{app:proof:BBC:bias:finite:2}

Similarly to Section \ref{app:proof:BBC:bias:finite:3}, under Assumption \ref{assum:C2}, we obtain from \eqref{eq:appB:2} that 
\begin{equation}
\label{eq:appB:3}
    \mathbb{E}[\widetilde{\bt}] - \bt_0 = -\mathbf{B}(n)\mathbf{b}(\bt_0,n) + \mathbf{r}(\bt_0,n) - \mathbb{E}\left[\mathbf{r}\left\{\widehat{\bpi}(\bt_0,n),n\right\}\right].
\end{equation}
We now want to refine the order of $\mathbf{r}(\bt_0,n) - \mathbb{E}\left[\mathbf{r}\left\{\widehat{\bpi}(\bt_0,n),n\right\}\right]$. Under Assumption~\ref{assum:C2}, we have
\begin{equation}
\label{eq:appB:4}
    \mathbb{E}[\widehat{\pi}_j]\mathbb{E}[\widehat{\pi}_k] = \left\{\theta_j + \mathcal{O}(n^{-\beta})\right\} \left\{\theta_k + \mathcal{O}(n^{-\beta})\right\} = \theta_j \theta_k + \mathcal{O}(n^{-\beta}),
\end{equation}
for all $j,k=1,\ldots,p$, where $\widehat{\pi}_j$ denotes the $j^{th}$ element of $\widehat{\bpi}(\bt_0,n)$. Moreover, recall from \eqref{eq:appB:5} that we have $\left\|\widehat{\bpi}(\bt_0, n) - \bt_0\right\|_\infty = \mathcal{O}_{\rm p}(n^{-\alpha})$, and thus, 
\begin{equation}
\begin{aligned}
\label{eq:appB:6}
    \var(\widehat{\pi}_i) &= \mathbb{E}\left[\left\{\widehat{\pi}_i - \mathbb{E}\left[\widehat{\pi}_i\right]\right\}^2\right] = \mathbb{E}\left[\left\{\widehat{\pi}_i - \theta_i - b_i(\bt_0,n)\right\}^2\right] \\
    &= \mathbb{E}\left[\left(\widehat{\pi}_i - \theta_i\right)^2 - 2\left(\widehat{\pi}_i - \theta_i\right)b_i(\bt_0,n) + b_i(\bt_0,n)^2\right] \\
    &= \mathbb{E}\left[\left(\widehat{\pi}_i - \theta_i\right)^2\right] -  b_i(\bt_0,n)^2 \\
    &= \mathcal{O}(n^{-2\alpha}) + \mathcal{O}(n^{-2\beta}) \\
    &= \mathcal{O}(n^{-2\alpha}),
\end{aligned}
\end{equation}
for all $i=1,\ldots,p$. Using the results in \eqref{eq:appB:4} and \eqref{eq:appB:6}, we have
\begin{equation}
\begin{aligned}
\label{eq:appB:7}
    \left|\theta_j\theta_k - \mathbb{E}[\widehat{\pi}_j\widehat{\pi}_k]\right| &= \left|\theta_j\theta_k - \mathbb{E}[\widehat{\pi}_j]\mathbb{E}[\widehat{\pi}_k] - \cov(\widehat{\pi}_j, \widehat{\pi}_k)\right| \\
    &\leq \left|\theta_j\theta_k - \mathbb{E}[\widehat{\pi}_j]\mathbb{E}[\widehat{\pi}_k]\right| + \left|\cov(\widehat{\pi}_j, \widehat{\pi}_k)\right| = \mathcal{O}(n^{-\min(\beta, 2\alpha)}),
\end{aligned}
\end{equation}
for all $j,k=1,\ldots,p$, since $\left| \cov(\widehat{\pi}_j, \widehat{\pi}_k)\right| \leq \sqrt{ \var(\widehat{\pi}_j)\var(\widehat{\pi}_k)} = \mathcal{O}(n^{-2\alpha})$. Define $\mathbf{u} \vcentcolon = [u_1 \ldots u_p]\tt$, where
\begin{equation*}
    u_i \vcentcolon = \sum_{j=1}^p \sum_{k=1}^p r_{ijk} n^{-\beta_3} \left(\theta_j \theta_k - \mathbb{E}[\widehat{\pi}_j\widehat{\pi}_k]\right),
\end{equation*}
then under Assumption \ref{assum:C2} and using the result in \eqref{eq:appB:7}, we obtain 
\begin{equation*}
\begin{aligned}
    \left\|\mathbf{r}(\bt_0,n) - \mathbb{E}\left[\mathbf{r}\left\{\widehat{\bpi}(\bt_0,n),n\right\}\right]\right\|_\infty &= \left\|\mathbf{u} + \mathbf{e}(\bt_0,n) - \mathbf{e}\left\{\widehat{\bpi}(\bt_0,n),n\right\}\right\|_\infty \\
    &\leq \left\|\mathbf{u}\right\|_\infty + \left\|\mathbf{e}(\bt_0,n) - \mathbf{e}\left\{\widehat{\bpi}(\bt_0,n),n\right\}\right\|_\infty \\
    &\leq \underset{i=1,\ldots,p}{\max} \sum_{j=1}^p \sum_{k=1}^p |r_{ijk}|\cdot n^{-\beta_3} \cdot \left|\theta_j\theta_k - \mathbb{E}[\widehat{\pi}_j\widehat{\pi}_k]\right| + \mathcal{O}(n^{-\beta_4}) \\
    &= \mathcal{O}\left(n^{-\min(\beta+\beta_3, 2\alpha+\beta_3, \beta_4)}\right).
\end{aligned}
\end{equation*}
By \eqref{eq:appB:3} we have
\begin{equation*}
\begin{aligned}
    \left\|\mathbb{E}[\widetilde{\bt}] - \bt_0\right\|_\infty &\leq \left\|\mathbf{B}(n)\mathbf{b}(\bt_0,n)\right\|_\infty + \left\|\mathbf{r}(\bt_0,n) - \mathbb{E}\left[\mathbf{r}\left(\widehat{\bpi}(\bt_0,n),n\right)\right]\right\|_\infty \\
    &\leq \left\|\mathbf{B}(n)\right\|_\infty \left\|\mathbf{b}(\bt_0,n)\right\|_\infty + \left\|\mathbf{r}(\bt_0,n) - \mathbb{E}\left[\mathbf{r}\left(\widehat{\bpi}(\bt_0,n),n\right)\right]\right\|_\infty \\
    &= \mathcal{O}\left(n^{-(\beta+\beta_1)}\right) + \mathcal{O}\left(n^{-\min(\beta+\beta_3, 2\alpha+\beta_3, \beta_4)}\right) \\
    &= \mathcal{O}\left(n^{-\min(\beta+\beta_1, \beta+\beta_3, 2\alpha+\beta_3, \beta_4)}\right),
\end{aligned}
\end{equation*}
which leads to 
\begin{equation}
\label{eq:appB:second}
    \left\|\mathbb{E}[\widetilde{\bt}] - \bt_0\right\|_2 = \mathcal{O}(p^{1/2}n^{-\min(\beta+\beta_1, \beta+\beta_3, 2\alpha+\beta_3, \beta_4)}).
\end{equation}

Since Assumption \ref{assum:C2} implies Assumption \ref{assum:C3}, we can incorporate the result of \eqref{eq:appB:third:final} under Assumption \ref{assum:C3} to the current result of \eqref{eq:appB:second} under Assumption \ref{assum:C2}, i.e. we have 
\begin{equation}
\label{eq:appB:second:final}
    \left\|\mathbb{E}[\widetilde{\bt}] - \bt_0\right\|_2 = \mathcal{O}(p^{1/2}n^{-\upsilon_2}), 
\end{equation}
where 
\begin{equation}
\begin{aligned}
\label{eq:appB:upsilon2:tmp}
    \upsilon_2 & \vcentcolon = \max\left\{ \min(\beta+\beta_1, \beta+\beta_3, 2\alpha+\beta_3, \beta_4), \upsilon_3 \right\} \\
    &= \max\left\{ \min(\beta+\beta_1, \beta+\beta_3, 2\alpha+\beta_3, \beta_4), \min(\beta+\beta_1, \beta_3), \alpha+\beta \right\}.
\end{aligned}
\end{equation}
Moreover, recall that under Assumption \ref{assum:C2}, we have $\beta_4 > \beta_3 > \alpha$. If $\beta+\beta_1 \leq \beta_3$, then $\min(\beta+\beta_1, \beta+\beta_3, 2\alpha+\beta_3, \beta_4) = \beta + \beta_1$, which leads to 
\begin{equation*}
\begin{aligned}
    &\max\left\{ \min(\beta+\beta_1, \beta+\beta_3, 2\alpha+\beta_3, \beta_4), \min(\beta+\beta_1, \beta_3)\right\} \\
    = &\min(\beta+\beta_1, \beta+\beta_3, 2\alpha+\beta_3, \beta_4) = \beta + \beta_1.
\end{aligned}
\end{equation*}
If $\beta+\beta_1 > \beta_3$, then 
\begin{equation}
\begin{aligned}
    &\max\left\{ \min(\beta+\beta_1, \beta+\beta_3, 2\alpha+\beta_3, \beta_4), \min(\beta+\beta_1, \beta_3)\right\} \\
    = &\max\left\{ \min(\beta+\beta_1, \beta+\beta_3, 2\alpha+\beta_3, \beta_4), \beta_3 \right\} \\
    = & \min(\beta+\beta_1, \beta+\beta_3, 2\alpha+\beta_3, \beta_4). 
\end{aligned}
\end{equation}
Therefore, the result in \eqref{eq:appB:upsilon2:tmp} can be further simplified to 
\begin{equation*}
\begin{aligned}
    \upsilon_2 &= \max\left\{ \min(\beta+\beta_1, \beta+\beta_3, 2\alpha+\beta_3, \beta_4), \min(\beta+\beta_1, \beta_3), \alpha+\beta \right\} \\
    &= \max\left\{ \min(\beta+\beta_1, \beta+\beta_3, 2\alpha+\beta_3, \beta_4), \alpha+\beta \right\}.
\end{aligned}
\end{equation*}
\hfill $\square$

\subsection{Proof of the First Result}
\label{app:proof:BBC:bias:finite:1}

Under Assumption \ref{assum:C1}, \eqref{eq:appB:2} can be re-evaluated as
\begin{equation*}
\begin{aligned}
    \mathbb{E}[\widetilde{\bt}] - \bt_0 &= \mathbf{B}(n)\bt_0 + \mathbf{c}(n) - \mathbb{E}\left[\mathbf{B}(n)\widehat{\bpi}(\bt_0,n) + \mathbf{c}(n)\right] \\
    &= -\mathbf{B}(n) \mathbb{E}\left[\widehat{\bpi}(\bt_0,n) - \bt_0\right] \\
    &= -\mathbf{B}(n) \mathbf{b}(\bt_0, n),
\end{aligned}
\end{equation*}
and thus,
\begin{equation*}
    \left\| \mathbb{E}[\widetilde{\bt}] - \bt_0 \right\|_\infty \leq \left\|\mathbf{B}(n)\right\|_\infty \left\|\mathbf{b}(\bt_0,n)\right\|_\infty = \mathcal{O}(n^{-(\beta + \beta_1)}),
\end{equation*}
which leads to 
\begin{equation}
\label{eq:appB:first}
    \left\| \mathbb{E}[\widetilde{\bt}] - \bt_0 \right\|_2 = \mathcal{O}(p^{1/2}n^{-\upsilon_1}),
\end{equation}
where $\upsilon_1 \vcentcolon = \beta+\beta_1$. 

\hfill $\square$

Obviously, we have $\upsilon_2 \geq \upsilon_3 \geq \alpha + \beta$. To complete the proof of Proposition \ref{prop:bias:finite}, we remain to show that $\upsilon_1 \geq \upsilon_2$. Recall that $\upsilon_2 = \max\left\{ \min(\beta+\beta_1, \beta+\beta_3, 2\alpha+\beta_3, \beta_4), \alpha+\beta \right\}$. Since $\upsilon_1 = \beta+\beta_1$, $\beta+\beta_1 \geq \min(\beta+\beta_1, \beta+\beta_3, 2\alpha+\beta_3, \beta_4)$ and $\beta+\beta_1 > \alpha+\beta$, we have $\upsilon_1 \geq \upsilon_2$. 

\hfill $\square$

\subsection{Additional Requirements for the Standard Bias Order of the BBC Estimator}

The bias of the BBC estimator has been shown to be $\mathcal{O}(n^{-2})$ in low dimensional settings and under different assumptions (see e.g. \citealp{hall1988exact}). Under our assumption framework, the same result can be obtained, for example, by adding additional restrictions on  Assumption \ref{assum:C}. Indeed, if $\mathbf{b}(\bt,n)$ is twice continuously differentiable in $\bt \in \bT$, and if
\begin{equation*}
    \underset{i=1,\ldots,p}{\max}\sum_{j=1}^p \sum_{k=1}^p \left|\frac{\partial^2 b_i(\bt,n)}{\partial \theta_j \partial \theta_k}\right| = \mathcal{O}(n^{-\beta}),
\end{equation*}
then using the Taylor's theorem, we can obtain that the BBC estimator $\widetilde{\bt}$ enjoys the bias order of $\mathcal{O}\left(p^{1/2}n^{-\min(2\beta, 2\alpha+\beta)}\right)$, which remains valid in high dimensional settings. With the typical values of $\alpha=1/2$ and $\beta = 1$, we have 
$\| \mathbb{E}[\widetilde{\bt}] - \bt_0 \|_2 = \mathcal{O}(p^{1/2}n^{-2})$. In addition, the JINI estimator enjoys the same bias correction properties.

\section{Proof of Corollary \ref{cor:JINI:BBC}}
\label{app:bias:order:compare}

By Theorem \ref{thm:bias:finite}, under Assumptions \ref{assum:A}, \ref{assum:B} and \ref{assum:C1}, we have $\left\|\mathbb{E}[\widehat{\bt}] -\bt_0 \right\|_2 = 0$ for sufficiently large (but finite) $n$, and $\left\|\mathbb{E}[\widetilde{\bt}] -\bt_0 \right\|_2 = \mathcal{O}\left(p^{1/2}n^{-\upsilon_1}\right)$ where $\upsilon_1 = \beta+\beta_1$. So in this case, we clearly have $\left\|\mathbb{E}[\widehat{\bt}] -\bt_0 \right\|_2 + \left\|\mathbb{E}[\widetilde{\bt}] -\bt_0 \right\|_2 = \mathcal{O}\left(\left\|\mathbb{E}[\widetilde{\bt}] -\bt_0 \right\|_2\right)$. Under Assumption \ref{assum:C}, we have $\left\|\mathbb{E}[\widehat{\bt}] -\bt_0 \right\|_2 = \mathcal{O}\left(p^{1/2}n^{-(\alpha + \beta)}\right)$ and $\left\|\mathbb{E}[\widetilde{\bt}] -\bt_0 \right\|_2 = \mathcal{O}\left(p^{1/2}n^{-(\alpha + \beta)}\right)$, and therefore, we obtain the same conclusion. So to prove Corollary~\ref{cor:JINI:BBC}, we essentially need to prove that $\gamma_i \geq \upsilon_i$ for $i=2,3$, under Assumptions \ref{assum:C2} and \ref{assum:C3}, respectively. 

\subsection{Proof of \texorpdfstring{$\gamma_2\geq\upsilon_2$}{\textgamma2\textgeq\textupsilon2}}
For simplicity, we denote that $\gamma \vcentcolon = \min(2\alpha+\beta_3, 2\beta_3, \beta_4)$, then $\gamma_2 = \max(\gamma, \alpha+\beta)$. As $2\beta_3 \geq \beta+\beta_3$, we have 
\begin{equation*}
\begin{aligned}
    \upsilon_2 &= \max\left\{ \min(\beta+\beta_1, \beta+\beta_3, 2\alpha+\beta_3, \beta_4), \alpha+\beta \right\} \\
    &= \max\left\{ \min(\beta+\beta_1, \beta+\beta_3, 2\alpha+\beta_3, 2\beta_3, \beta_4), \alpha+\beta \right\} \\ 
    &= \max\left\{ \min(\beta+\beta_1, \beta+\beta_3, \gamma), \alpha+\beta \right\}. 
\end{aligned}
\end{equation*}
If $\gamma \geq \alpha + \beta$, then $\gamma_2 = \gamma$. By Assumption \ref{assum:C2}, we also have $\beta_1 > \alpha$ and $\beta_3 > \alpha$, so $\upsilon_2 = \min(\beta+\beta_1, \beta+\beta_3, \gamma) \leq \gamma = \gamma_2$. Therefore, in this case, we have $\gamma_2 \geq \upsilon_2$. If $\gamma < \alpha + \beta$, then $\gamma_2 = \alpha + \beta$ and 
\begin{equation*}
\begin{aligned}
    \upsilon_2 &= \max\left\{ \min(\beta+\beta_1, \beta+\beta_3, \gamma), \alpha+\beta \right\} \\
    &= \max\left(\gamma, \alpha+\beta \right) = \alpha+\beta, 
\end{aligned}
\end{equation*}
%
hence $\upsilon_2 = \gamma_2$ in this case. So overall we have $\gamma_2 \geq \upsilon_2$. 

\hfill $\square$

\subsection{Proof of \texorpdfstring{$\gamma_3\geq\upsilon_3$}{\textgamma3\textgeq\textupsilon3}}
We recall that $\gamma_3 = \max(\beta_3, \alpha+\beta)$ and $\upsilon_3 = \max\{\min(\beta+\beta_1, \beta_3), \alpha+\beta\}$. If $\beta_3 \leq \alpha+\beta$, then $\gamma_3 = \alpha+\beta$. Under Assumption \ref{assum:C3}, we have $\beta_1 > \alpha$ so $\alpha+\beta < \beta+\beta_1$, leading to $\upsilon_3 = \max(\beta_3, \alpha+\beta) = \alpha+\beta = \gamma_3$. Therefore, in this case, we have $\gamma_3 = \upsilon_3$. If $\beta_3 > \alpha+\beta$, then $\gamma_3 = \beta_3$ and $\upsilon_3 = \min(\beta+\beta_1, \beta_3) \leq \beta_3 = \gamma_3$. Hence, in this case, we have $\gamma_3 \geq \upsilon_3$. So overall we have $\gamma_3 \geq \upsilon_3$.  

\hfill $\square$

\section{Proof of Theorem \ref{thm:bias:asymp}}
\label{app:proof:JINI:bias:asymp}

By definition in \eqref{eqn:def:JINI}, the JINI estimator satisfies
\begin{equation}
\begin{aligned}
\label{eq:appC:1}
    \widehat{\bpi}(\bt_0, n) &= \bt_0 + \mathbf{a}(\bt_0) + \mathbf{b}(\bt_0,n) + \mathbf{v}(\bt_0, n) =  \frac{1}{H}\sum_{h=1}^H \widehat{\bpi}_h(\widehat{\bt},n) \\
    &= \widehat{\bt} + \mathbf{a}(\widehat{\bt}) + \mathbf{b}(\widehat{\bt}, n) + \frac{1}{H}\sum_{h=1}^H \mathbf{v}_h(\widehat{\bt}, n).
\end{aligned}
\end{equation}
By rearranging the terms and taking expectations on both sides, we obtain
\begin{equation}
\label{eq:appC:2}
    \mathbb{E}[\widehat{\bt}] - \bt_0 = \mathbf{a}(\bt_0) - \mathbb{E}\left[\mathbf{a}(\widehat{\bt})\right] + \mathbf{b}(\bt_0,n) - \mathbb{E}\left[\mathbf{b}(\widehat{\bt},n)\right].
\end{equation}

\subsection{Proof of the First Result}
\label{app:proof:JINI:bias:asymp:1}

Under Assumptions \ref{assum:C1} and \ref{assum:D1}, \eqref{eq:appC:2} can be evaluated as
\begin{equation*}
    \mathbb{E}[\widehat{\bt}] - \bt_0 = -\mathbf{A} \left\{\mathbb{E}[\widehat{\bt}] - \bt_0\right\} - \mathbf{B}(n)\left\{\mathbb{E}[\widehat{\bt}] - \bt_0\right\},
\end{equation*}
or equivalently,
\begin{equation}
\label{eq:appC:inconsit:Ozero}
    \left\{\mathbf{I}+\mathbf{A}+\mathbf{B}(n)\right\}\left\{\mathbb{E}[\widehat{\bt}] - \bt_0\right\} = \0.
\end{equation}
Moreover, we have 
\begin{equation*}
   (\mathbf{I} + \mathbf{A})^{-1} \left\{\mathbf{I} + \mathbf{A} + \mathbf{B}(n)\right\} = \mathbf{I} + (\mathbf{I} + \mathbf{A})^{-1} \mathbf{B}(n),
\end{equation*}
and, under Assumptions \ref{assum:C1} and \ref{assum:D1}, that
\begin{equation*}
    \left\|(\mathbf{I}+\mathbf{A})^{-1} \mathbf{B}(n)\right\|_\infty \leq \left\|(\mathbf{I}+\mathbf{A})^{-1}\right\|_\infty \left\| \mathbf{B}(n)\right\|_\infty = \mathcal{O}(n^{-\beta_1}).
\end{equation*}
Since $(\mathbf{I} + \mathbf{A})^{-1} \mathbf{B}(n)$ satisfies the condition of Lemma \ref{lem:1}, we have that for sufficiently large~$n$, $\mathbf{I} + (\mathbf{I} + \mathbf{A})^{-1} \mathbf{B}(n)$ is nonsingular. Since $\mathbf{I}+\mathbf{A}+\mathbf{B}(n) = (\mathbf{I}+\mathbf{A}) \left\{\mathbf{I} + (\mathbf{I}+\mathbf{A})^{-1} \mathbf{B}(n)\right\}$, and both $(\mathbf{I}+\mathbf{A})$ and $\mathbf{I} + (\mathbf{I}+\mathbf{A})^{-1} \mathbf{B}(n)$ are nonsingular for sufficiently large $n$, we obtain that $\mathbf{I}+\mathbf{A}+\mathbf{B}(n)$ is nonsingular as well for sufficiently large~$n$. Therefore, by \eqref{eq:appC:inconsit:Ozero}, we have $\mathbb{E}[\widehat{\bt}] - \bt_0 = \0$ for sufficiently large $n$.

$\hfill\square$

\subsection{Proof of the Second Result}
\label{app:proof:JINI:bias:asymp:2}

From \eqref{eq:appC:1} and by the mean value theorem, we have 
\begin{equation}
\begin{aligned}
\label{eq:appC:3-pre}
   \left\| \widehat{\bt} -\bt_0\right\|_\infty &= \left\|\mathbf{a}(\bt_0) - \mathbf{a}(\widehat{\bt}) + \mathbf{b}(\bt_0, n) - \mathbf{b}(\widehat{\bt},n) + \mathbf{v}(\bt_0,n) - \frac{1}{H}\sum_{h=1}^H \mathbf{v}_h(\widehat{\bt},n)\right\|_\infty \\
    &\leq \left\|\mathbf{A}(\bt^*)\right\|_\infty\left\| \widehat{\bt} -\bt_0\right\|_\infty + \left\|\mathbf{b}(\bt_0, n) - \mathbf{b}(\widehat{\bt},n) +\mathbf{v}(\bt_0,n) - \frac{1}{H}\sum_{h=1}^H \mathbf{v}_h(\widehat{\bt},n)\right\|_\infty, 
\end{aligned}
\end{equation}
where $\bt^* \in \bT$ is on the line connecting $\widehat{\bt}$ and $\bt_0$.
Therefore, under Assumptions \ref{assum:B}, \ref{assum:C} and \ref{assum:D}, by Lemma~\ref{lem:1} and by rearranging \eqref{eq:appC:3-pre} we have
\begin{equation}
\begin{aligned}
\label{eq:appC:3}    
    \left\|\widehat{\bt} -\bt_0\right\|_\infty &\leq \left\{1-\left\|\mathbf{A}(\bt^*)\right\|_\infty\right\}^{-1} \left\|\mathbf{b}(\bt_0, n) - \mathbf{b}(\widehat{\bt},n) + \mathbf{v}(\bt_0,n) - \frac{1}{H}\sum_{h=1}^H \mathbf{v}_h(\widehat{\bt},n)\right\|_\infty \\
    &\leq \left\{1-\left\|\mathbf{A}(\bt^*)\right\|_\infty\right\}^{-1} \left\|\mathbf{b}(\bt_0, n) - \mathbf{b}(\widehat{\bt},n)\right\|_\infty \\
    &\quad + \left\{1-\left\|\mathbf{A}(\bt^*)\right\|_\infty\right\}^{-1} \left\|\mathbf{v}(\bt_0,n) - \frac{1}{H}\sum_{h=1}^H \mathbf{v}_h(\widehat{\bt},n)\right\|_\infty \\ 
    &= \mathcal{O}_{\rm p}(n^{-\beta}) + \mathcal{O}_{\rm p}(n^{-\alpha}) = \mathcal{O}_{\rm p}(n^{-\alpha}).
\end{aligned}    
\end{equation}
In addition, by Taylor's theorem, we have 
\begin{equation}
\label{eq:appC:4}
    \mathbf{a}(\widehat{\bt}) - \mathbf{a}(\bt_0) = \mathbf{A}(\bt_0)(\widehat{\bt} - \bt_0) + \mathbf{u},
\end{equation}
with $\mathbf{u} = [u_1 \ldots u_p]\tt$ and 
\begin{equation}
\label{eq:appC:5}
    u_i \vcentcolon = \sum_{j=1}^p \sum_{k=1}^p g_{ijk}(\widehat{\bt}) \Delta_j\Delta_k,
\end{equation}
where $\Delta_j$ denotes the $j^{th}$ element of $(\widehat{\bt} - \bt_0)$ and $g_{ijk}(\bt): \bT \to \real$ is some function such that, under Assumptions \ref{assum:A} and \ref{assum:D}, we have
\begin{equation}
\label{eq:appC:6}
    \left|g_{ijk}(\bt)\right| \leq \frac{1}{2} \underset{\bt \in \bT}{\max}\left|\frac{\partial^2 a_i(\bt)}{\partial \theta_j \partial \theta_k}\right| = \frac{1}{2} \underset{\bt \in \bT}{\max} \left|G_{ijk}(\bt)\right|,
\end{equation}
for all $\bt \in \bT$. 
%
From Assumption \ref{assum:D} and Lemma \ref{lem:1}, we have $\left\|\left\{\mathbf{I} + \mathbf{A}(\bt_0)\right\}^{-1}\right\|_\infty = \mathcal{O}(1)$. Moreover, from \eqref{eq:appC:2}, we have
\begin{equation*}
    \mathbb{E}[\widehat{\bt}] - \bt_0 = -\mathbf{A}(\bt_0) \left\{\mathbb{E}[\widehat{\bt}] - \bt_0\right\} - \mathbb{E}[\mathbf{u}] + \mathbf{b}(\bt_0,n) - \mathbb{E}\left[\mathbf{b}(\widehat{\bt}, n)\right],
\end{equation*}
or equivalently, 
\begin{equation*}
    \mathbb{E}[\widehat{\bt}] - \bt_0 = -\left\{\mathbf{I} + \mathbf{A}(\bt_0)\right\}^{-1} \left\{\mathbb{E}[\mathbf{u}] + \mathbb{E}\left[\mathbf{b}(\widehat{\bt}, n)\right] - \mathbf{b}(\bt_0,n)\right\},
\end{equation*}
and therefore, 
\begin{equation}
\begin{aligned}
\label{eq:appC:8}
    \left\|\mathbb{E}[\widehat{\bt}] - \bt_0\right\|_\infty &\leq \left\|\left\{\mathbf{I} + \mathbf{A}(\bt_0)\right\}^{-1}\right\|_\infty \left\|\mathbb{E}[\mathbf{u}] + \mathbb{E}\left[\mathbf{b}(\widehat{\bt}, n)\right] - \mathbf{b}(\bt_0,n)\right\|_\infty \\
    &\leq \left\|\left\{\mathbf{I} + \mathbf{A}(\bt_0)\right\}^{-1}\right\|_\infty \left\{\left\|\mathbb{E}[\mathbf{u}]\right\|_\infty + \left\|\mathbb{E}\left[\mathbf{b}(\widehat{\bt}, n)\right] - \mathbf{b}(\bt_0,n)\right\|_\infty\right\}. 
\end{aligned} 
\end{equation}
We now want to obtain the order of $\mathbb{E}[\mathbf{u}]$ and $\mathbb{E}\left[\mathbf{b}(\widehat{\bt}, n)\right] - \mathbf{b}(\bt_0,n)$ respectively. Firstly, for the order of $\mathbb{E}[\mathbf{u}]$, we have from \eqref{eq:appC:6} that 
\begin{equation*}
\begin{aligned}
    \left\|\mathbb{E}[\mathbf{u}]\right\|_\infty &\leq \underset{i=1,\ldots,p}{\max} \sum_{j=1}^p \sum_{k=1}^p \mathbb{E}\left|g_{ijk}(\widehat{\bt})\Delta_j \Delta_k\right| \leq \underset{i=1,\ldots,p}{\max} \sum_{j=1}^p \sum_{k=1}^p \mathbb{E}\left\{\underset{\bt \in \bT}{\max} |g_{ijk}(\bt)|\cdot \left|\Delta_j \Delta_k\right|\right\} \\
    &\leq \frac{1}{2} \underset{i=1,\ldots,p}{\max} \sum_{j=1}^p \sum_{k=1}^p \left\{\underset{\bt \in \bT}{\max} |G_{ijk}(\bt)|\right\} \mathbb{E}|\Delta_j \Delta_k|.
\end{aligned}
\end{equation*}
Recall from \eqref{eq:appC:3} that $\Delta_j = \mathcal{O}_{\rm p}(n^{-\alpha})$ for all $j=1,\ldots,p$, hence $\Delta_j\Delta_k = \mathcal{O}_{\rm p}(n^{-2\alpha})$ and thus $\mathbb{E}|\Delta_j \Delta_k| = \mathcal{O}(n^{-2\alpha})$ for all $j,k=1,\ldots,p$. Therefore, under Assumption \ref{assum:D}, we have
\begin{equation}
\begin{aligned}
\label{eq:appC:9}
    \left\|\mathbb{E}[\mathbf{u}]\right\|_\infty 
    &\leq \frac{1}{2} \underset{i=1,\ldots,p}{\max} \sum_{j=1}^p \sum_{k=1}^p \left\{\underset{\bt \in \bT}{\max} |G_{ijk}(\bt)|\right\} \mathbb{E}|\Delta_j \Delta_k| = \mathcal{O}(n^{-2\alpha}). 
\end{aligned}
\end{equation}
On the other hand, for the order of $\mathbb{E}\left[\mathbf{b}(\widehat{\bt}, n)\right] - \mathbf{b}(\bt_0,n)$, by mean value theorem and under Assumption \ref{assum:C}, we have
\begin{equation*}
\begin{aligned}
    \left\|\mathbb{E}\left[\mathbf{b}(\widehat{\bt}, n)\right] - \mathbf{b}(\bt_0,n)\right\|_\infty &= \left\|\mathbb{E}\left[\mathbf{B}(\bt^*,n)(\widehat{\bt} - \bt_0)\right]\right\|_\infty \leq \mathbb{E}\left[\left\|\mathbf{B}(\bt^*,n)(\widehat{\bt} - \bt_0)\right\|_\infty\right] \\
    &\leq \mathbb{E}\left[\left\|\mathbf{B}(\bt^*,n)\right\|_\infty \left\|\widehat{\bt} - \bt_0\right\|_\infty \right],
\end{aligned}    
\end{equation*}
where $\bt^{*}$ is on the line connecting $\widehat{\bt}$ and $\bt_0$. By Assumption \ref{assum:C}, we have $\left\|\mathbf{B}(\bt^*,n)\right\|_\infty = \mathcal{O}_{\rm p}(n^{-\beta})$. Moreover, recall from \eqref{eq:appC:3} that $\left\|\widehat{\bt} - \bt_0\right\|_\infty = \mathcal{O}_{\rm p}(n^{-\alpha})$, hence we obtain $\left\|\mathbf{B}(\bt^*,n)\right\|_\infty \left\|\widehat{\bt} - \bt_0\right\|_\infty = \mathcal{O}_{\rm p}(n^{-(\alpha+\beta)})$. Thus,
\begin{equation}
\label{eq:appC:10}
    \left\|\mathbb{E}\left[\mathbf{b}(\widehat{\bt}, n)\right] - \mathbf{b}(\bt_0,n)\right\|_\infty \leq \mathbb{E}\left[\left\|\mathbf{B}(\bt^*,n)\right\|_\infty \left\|\widehat{\bt} - \bt_0\right\|_\infty \right] =  \mathcal{O}(n^{-(\alpha+\beta)}).
\end{equation}
Therefore, from \eqref{eq:appC:8}, \eqref{eq:appC:9} and \eqref{eq:appC:10}, under Assumption \ref{assum:D} we have 
\begin{equation*}
\begin{aligned}
    \left\|\mathbb{E}[\widehat{\bt}] - \bt_0\right\|_\infty &\leq \left\|\left\{\mathbf{I} + \mathbf{A}(\bt_0)\right\}^{-1}\right\|_\infty \left\{\left\|\mathbb{E}[\mathbf{u}]\right\|_\infty + \left\|\mathbb{E}\left[\mathbf{b}(\widehat{\bt}, n)\right] - \mathbf{b}(\bt_0,n)\right\|_\infty\right\} \\
    &= \mathcal{O}(n^{-2\alpha}) + \mathcal{O}(n^{-(\alpha+\beta)}) = \mathcal{O}(n^{-2\alpha}),
\end{aligned}    
\end{equation*}
which leads to 
\begin{equation*}
    \left\|\mathbb{E}[\widehat{\bt}] - \bt_0\right\|_2 = \mathcal{O}(p^{1/2}n^{-2\alpha}).
\end{equation*}
$\hfill\square$

\subsection{A Discussion on Assumptions \ref{assum:D} and \ref{assum:D1}}

In the second part of Assumption \ref{assum:D}, the condition $\left\|\mathbf{A}(\bt)\right\|_\infty < 1$ implies that $\mathbf{a}(\bt)$ is a contraction map with respect to the norm $\|\cdot\|_\infty$, however, not necessarily with respect to the norm $\|\cdot\|_2$ as required in the first part of Assumption \ref{assum:D}. In addition, by Lemma~\ref{lem:1} in Appendix~\ref{app:proof:JINI:bias:finite}, $\left\|\mathbf{A}(\bt)\right\|_\infty < 1$ implies that  $\|\left\{\mathbf{I}+\mathbf{A}(\bt)\right\}^{-1}\|_\infty = \mathcal{O}(1)$. 

Similarly, the condition $0 < \|\mathbf{A}\|_2 < 1$ in Assumption \ref{assum:D1} guarantees that $\mathbf{a}(\bt)$ is a contraction map with respect to the norm $\|\cdot\|_2$, and that $\|(\mathbf{I}+\mathbf{A})^{-1}\|_2 = \mathcal{O}(1)$. However, since $p$ is allowed to diverge, it does not necessarily imply that $\|(\mathbf{I}+\mathbf{A})^{-1}\|_\infty = \mathcal{O}(1)$ as required in Assumption~\ref{assum:D1}.

Moreover, Assumption \ref{assum:D} can be further relaxed, for example, by replacing
$$\underset{i=1,\ldots,p}{\max} \sum_{j=1}^p \sum_{k=1}^p |G_{ijk}(\bt)| = \mathcal{O}(1),$$
with $\underset{i=1,\ldots,p}{\max} \|\mathbf{G}_i(\bt)\|_\infty = \underset{i=1,\ldots,p}{\max} \left\{\underset{j=1,\ldots,p}{\max} \sum_{k=1}^p \left|G_{ijk}(\bt)\right|\right\} = \mathcal{O}(1)$. In this case, the second result in Theorem \ref{thm:bias:asymp} will change to $\|\mathbb{E}[\widehat{\bt}] - \bt_0\|_2 = \mathcal{O}\left(p^{1/2} \max\left\{pn^{-2\alpha}, n^{-(\alpha+\beta)}\right\}\right).$ This will alter the restriction on $p/n$ in terms of $\alpha$. Indeed, instead of $p=o(n^{2\alpha})$ as stated in Assumption~\ref{assum:B}, we will need stronger condition that $p=o(n^{4/3\alpha})$. This suggests that there is a trade-off between the magnitude of the asymptotic bias and the number of parameters~$p$.

\section{Proof of Theorem \ref{thm:consist:asympnorm}}
\label{app:proof:consist:asympnorm}

\subsection{Proof of Consistency}
\label{app:proof:consist:asympnorm:1}

First, we recall that $\bpi(\bt, n) = \mathbb{E}\left[\widehat{\bpi}(\bt, n)\right] = \bt + \mathbf{a}(\bt) + \mathbf{b}(\bt, n)$ and also that $\bpi(\bt)  = \bt + \mathbf{a}(\bt)$. Next, we define the functions $Q(\bt)$, $Q(\bt,n)$ and $\widehat{Q}(\bt,n)$ as 
\begin{align*}
    & Q(\bt) \vcentcolon = \left\|\bpi(\bt_0) - \bpi(\bt)\right\|_2, \\
    & Q(\bt, n) \vcentcolon = \left\|\bpi(\bt_0, n) - \bpi(\bt, n)\right\|_2, \\
    & \widehat{Q}(\bt, n) \vcentcolon = \left\|\widehat{\bpi}(\bt_0, n) - \frac{1}{H}\sum_{h=1}^H \widehat{\bpi}_h (\bt, n) \right\|_2.
\end{align*}
Then this proof is directly obtained by verifying the conditions of Theorem 2.1 of \cite{newey1994large} on the functions $Q(\bt)$ and $\widehat{Q}(\bt,n)$. Reformulating the requirements of this theorem to our setting, we want to show that (i) $\bT$ is compact, (ii) $Q(\bt)$ is continuous, (iii) $Q(\bt)$ is uniquely minimized at $\bt_0$, and (iv) $\widehat{Q}(\bt,n)$ converges uniformly in probability to $Q(\bt)$.

On one hand, Assumption \ref{assum:A} ensures the compactness of $\bT$. On the other hand, Assumption~\ref{assum:D} ensures that $Q(\bt)$ is continuous and uniquely minimized at $\bt_0$, since $\bpi(\bt)$ is continuous and injective. What remains to show is that $\widehat{Q}(\bt,n)$ converges uniformly in probability to $Q(\bt)$, which is equivalent to show: for all $\epsilon >0$ and for all $\delta > 0$, there exists a sample size $N \in \mathbb{N}$ such that for all $n \geq N$, we have 
\begin{equation*}
    P\left\{\underset{\bt \in \bT}{\sup} \left|\widehat{Q}(\bt, n) - Q(\bt)\right| \geq \epsilon\right\} \leq \delta.
\end{equation*}

Fix $\epsilon > 0$ and $\delta > 0$. Using the above definitions, we have
\begin{equation}
\label{eq:appD:1}
    \underset{\bt \in \bT}{\sup} \left|\widehat{Q}(\bt, n) - Q(\bt)\right| \leq \underset{\bt \in \bT}{\sup} \left\{\left|\widehat{Q}(\bt, n) - Q(\bt, n)\right| + \left|Q(\bt, n) - Q(\bt)\right| \right\}.
\end{equation}
Considering the first term on the right hand side of \eqref{eq:appD:1}, we have
\begin{equation}
\begin{aligned}
\label{eq:appD:2}
    \left|\widehat{Q}(\bt, n) - Q(\bt, n)\right| &\leq \left\|\widehat{\bpi}(\bt_0, n) - \frac{1}{H}\sum_{h=1}^H \widehat{\bpi}_h (\bt, n) - \bpi(\bt_0, n) + \bpi(\bt, n)\right\|_2 \\
    &\leq \left\|\widehat{\bpi}(\bt_0, n) - \bpi(\bt_0, n)\right\|_2 + \left\|\bpi(\bt, n) - \frac{1}{H}\sum_{h=1}^H \widehat{\bpi}_h (\bt, n)\right\|_2 \\
    &= \left\|\mathbf{v}(\bt_0,n)\right\|_2 + \left\|\frac{1}{H} \sum_{h=1}^H \mathbf{v}_h(\bt, n)\right\|_2 \\
    &= \mathcal{O}_{\rm p}(p^{1/2} n^{-\alpha}) + \mathcal{O}_{\rm p}(H^{-1/2} p^{1/2} n^{-\alpha}) \\
    &= \mathcal{O}_{\rm p}(p^{1/2} n^{-\alpha}).
\end{aligned}
\end{equation}
Similarly, the second term of \eqref{eq:appD:1} can be computed as
\begin{equation}
\begin{aligned}
\label{eq:appD:3}
    \left|Q(\bt, n) - Q(\bt)\right| &\leq \left\| \bpi(\bt_0, n) - \bpi(\bt, n) - \bpi(\bt_0) + \bpi(\bt) \right\|_2 \\
    &\leq \left\| \bpi(\bt_0, n) - \bpi(\bt_0) \right\|_2 + \left\| \bpi(\bt, n) -  \bpi(\bt) \right\|_2 \\
    &= \left\| \mathbf{b}(\bt_0,n) \right\|_2 + \left\| \mathbf{b}(\bt,n) \right\|_2 \\
    &= \mathcal{O}(p^{1/2} n^{-\beta}).
\end{aligned}
\end{equation}
Combining the results in \eqref{eq:appD:2} and \eqref{eq:appD:3} into \eqref{eq:appD:1}, we obtain
\begin{equation*}
    \underset{\bt \in \bT}{\sup} \left|\widehat{Q}(\bt, n) - Q(\bt)\right| = \mathcal{O}_{\rm p}(p^{1/2}n^{-\alpha}) + \mathcal{O}(p^{1/2}n^{-\beta}),
\end{equation*}
which, by Assumptions \ref{assum:B} and \ref{assum:C}, completes the proof. 
\hfill $\square$ 

\subsection{Proof of Asymptotic Normality}
\label{app:proof:consist:asympnorm:2}

Recall from \eqref{eq:appC:1} that 
\begin{equation*}
    \widehat{\bt} - \bt_0 = \mathbf{a}(\bt_0) - \mathbf{a}(\widehat{\bt}) + \mathbf{b}(\bt_0, n) + \mathbf{v}(\bt_0,n) -  \mathbf{b}(\widehat{\bt},n) - \frac{1}{H}\sum_{h=1}^H \mathbf{v}_h(\widehat{\bt},n),
\end{equation*}
and from \eqref{eq:appC:4}, \eqref{eq:appC:5} that 
\begin{equation*}
    \mathbf{a}(\widehat{\bt}) - \mathbf{a}(\bt_0) = \mathbf{A}(\bt_0)(\widehat{\bt} - \bt_0) + \mathbf{u},
\end{equation*}
with $\mathbf{u} = [u_1 \ldots u_p]\tt$ where
\begin{equation*}
    u_i \vcentcolon = \sum_{j=1}^p \sum_{k=1}^p g_{ijk}(\widehat{\bt})\Delta_j\Delta_k. 
\end{equation*}
So we have
\begin{equation*}
    \widehat{\bt} - \bt_0 = -\mathbf{A}(\bt_0)(\widehat{\bt} - \bt_0) - \mathbf{u} + \mathbf{b}(\bt_0, n) + \mathbf{v}(\bt_0,n) -  \mathbf{b}(\widehat{\bt},n) - \frac{1}{H}\sum_{h=1}^H \mathbf{v}_h(\widehat{\bt},n).
\end{equation*}
By Assumption~\ref{assum:D} and Lemma~\ref{lem:1}, we know $\mathbf{I}+\mathbf{A}(
\bt_0)$ is nonsingular. For simplicity, we write $\bm{\mathcal{A}} \vcentcolon = \left\{\mathbf{I}+\mathbf{A}(
\bt_0)\right\}^{-1}$. Then, we can also write
\begin{equation*}
    \widehat{\bt} - \bt_0 = \bm{\mathcal{A}}\left\{\mathbf{b}(\bt_0, n) + \mathbf{v}(\bt_0,n)\right\} - \bm{\mathcal{A}} \left\{\mathbf{b}(\widehat{\bt},n) + \frac{1}{H}\sum_{h=1}^H \mathbf{v}_h(\widehat{\bt},n)\right\} - \bm{\mathcal{A}} \mathbf{u},
\end{equation*}
and therefore,
\begin{equation}
\begin{aligned}
\label{eq:appD:4}
        \sqrt{n}\mathbf{s}\tt (\widehat{\bt} - \bt_0) &=  \sqrt{n}\mathbf{s}\tt\bm{\mathcal{A}}\left\{\mathbf{b}(\bt_0, n) + \mathbf{v}(\bt_0,n)\right\}
        \\
        &  \hspace{0.5cm}- \sqrt{n}\mathbf{s}\tt \bm{\mathcal{A}} \left\{\mathbf{b}(\widehat{\bt},n) + \frac{1}{H}\sum_{h=1}^H \mathbf{v}_h(\widehat{\bt},n)\right\} 
          - \sqrt{n}\mathbf{s}\tt \bm{\mathcal{A}} \mathbf{u}.
\end{aligned}
\end{equation}
Next we want to evaluate the three terms on the right hand side of \eqref{eq:appD:4} respectively. By Assumption \ref{assum:E}, we have 
\begin{equation*}
    \sqrt{n} \mathbf{s}\tt \mathbf{\Sigma}(\bt_0)^{-1/2} \left\{\widehat{\bpi}(\bt_0,n) - \bpi(\bt_0)\right\} = \sqrt{n} \mathbf{s}\tt \mathbf{\Sigma}(\bt_0)^{-1/2} \left\{\mathbf{b}(\bt_0,n) + \mathbf{v}(\bt_0,n)\right\} \overset{d}{\to} \mathcal{N}(0,1),
\end{equation*}
which implies that 
\begin{equation*}
    \sqrt{n}\mathbf{s}\tt\bm{\mathcal{A}} \left\{\mathbf{b}(\bt_0,n) + \mathbf{v}(\bt_0,n)\right\} \overset{d}{\to} \mathcal{N}\left(0, W_s(\bt_0)\right),
\end{equation*}
where $W_s(\bt_0) \vcentcolon = \mathbf{s}\tt \bm{\mathcal{A}} \mathbf{\Sigma}(\bt_0)\bm{\mathcal{A}}\tt \mathbf{s}$. So we have
\begin{equation}
\label{eq:appD:5}
    \sqrt{n}\mathbf{s}\tt\bm{\mathcal{A}} \left\{\mathbf{b}(\bt_0,n) + \mathbf{v}(\bt_0,n)\right\} \overset{d}{=} W_s(\bt_0)^{1/2}z_0 + o_{\rm p}(1),
\end{equation}
where $z_0 \sim \mathcal{N}(0,1)$. Similarly, for any $h=1,\ldots,H$, we have
\begin{equation}
\label{eq:appD:6}
    \sqrt{n}\mathbf{s}\tt\bm{\mathcal{A}} \left\{\mathbf{b}(\widehat{\bt},n) + \mathbf{v}_h(\widehat{\bt},n)\right\} \overset{d}{=} W_s(\widehat{\bt})^{1/2}z_h + o_{\rm p}(1),
\end{equation}
where $z_h$ are independent $\mathcal{N}(0,1)$ random variables. Now we want to further evaluate $W_s(\widehat{\bt})^{1/2}z_h$. By Taylor's theorem, we have 
\begin{equation}
\label{eq:appD:7}
    W_s(\widehat{\bt})^{1/2} = W_s(\bt_0)^{1/2} + \frac{\partial W_s(\bt_0)^{1/2}}{\partial \bt\tt} (\widehat{\bt} - \bt_0) + o_{\rm p}(1).
\end{equation}
Moreover, 
\begin{equation*}
\begin{aligned}
    \frac{\partial W_s(\bt_0)^{1/2}}{\partial \bt\tt} &= \frac{1}{2} W_s(\bt_0)^{-1/2} \frac{\partial W_s(\bt_0)}{\partial \bt\tt} = \frac{1}{2} W_s(\bt_0)^{-1/2} \frac{\partial \left\{\mathbf{s}\tt \bm{\mathcal{A}} \mathbf{\Sigma}(\bt_0)\bm{\mathcal{A}}\tt \mathbf{s}\right\}}{\partial \bt\tt} \\
    &= \frac{1}{2} W_s(\bt_0)^{-1/2} \tr \left[\frac{\partial \left\{\mathbf{s}\tt \bm{\mathcal{A}} \mathbf{\Sigma}(\bt_0)\bm{\mathcal{A}}\tt \mathbf{s}\right\}}{\partial \left\{\bm{\mathcal{A}} \mathbf{\Sigma}(\bt)\bm{\mathcal{A}}\tt\right\}} \cdot \frac{\partial \left\{\bm{\mathcal{A}} \mathbf{\Sigma}(\bt_0)\bm{\mathcal{A}}\tt\right\}}{\partial \bt\tt}\right]\\
    &= \frac{1}{2} W_s(\bt_0)^{-1/2} \left[\tr \left\{\mathbf{s}\mathbf{s}\tt \bm{\mathcal{A}} \frac{\partial \mathbf{\Sigma}(\bt_0)}{\partial \theta_i} \bm{\mathcal{A}}\tt \right\}\right]_{i=1,\ldots,p} \\
    &\leq \frac{1}{2} W_s(\bt_0)^{-1/2} \left[\lambda_{\max}\left\{ \bm{\mathcal{A}} \frac{\partial \mathbf{\Sigma}(\bt_0)}{\partial \theta_i} \bm{\mathcal{A}}\tt \right\}\right]_{i=1,\ldots,p} \\
    &= \frac{1}{2} W_s(\bt_0)^{-1/2} \mathbf{w}(\bt_0),
\end{aligned}    
\end{equation*}
where the last inequality comes from the fact that for any $i=1,\ldots,p$, we have
\begin{equation*}
\begin{aligned}
    \tr \left\{\mathbf{s}\mathbf{s}\tt \bm{\mathcal{A}} \frac{\partial \mathbf{\Sigma}(\bt_0)}{\partial \theta_i} \bm{\mathcal{A}}\tt \right\} &\leq \tr\left(\mathbf{s}\mathbf{s}\tt\right) \lambda_{\max}\left\{\bm{\mathcal{A}} \frac{\partial \mathbf{\Sigma}(\bt_0)}{\partial \theta_i} \bm{\mathcal{A}}\tt \right\} = \|\mathbf{s}\|_2^2 \cdot \lambda_{\max}\left\{\bm{\mathcal{A}} \frac{\partial \mathbf{\Sigma}(\bt_0)}{\partial \theta_i} \bm{\mathcal{A}}\tt \right\} \\
    &= \lambda_{\max}\left\{\bm{\mathcal{A}} \frac{\partial \mathbf{\Sigma}(\bt_0)}{\partial \theta_i} \bm{\mathcal{A}}\tt \right\},
\end{aligned}
\end{equation*}
based on the inequality from Theorem 1 in \cite{fang1994inequalities} since $\mathbf{s}\mathbf{s}\tt$ is positive semi-definite and $\bm{\mathcal{A}}\left\{\partial \mathbf{\Sigma}(\bt_0) / \partial \theta_i\right\} \bm{\mathcal{A}}\tt$ is symmetric. Therefore, using Assumption \ref{assum:E} and the consistency of $\widehat{\bt}$, we have
\begin{equation*}
\begin{aligned}
    \left| \frac{\partial W_s(\bt_0)^{1/2}}{\partial \bt\tt} (\widehat{\bt} - \bt_0) \right| &\leq \left\|\frac{\partial W_s(\bt_0)^{1/2}}{\partial \bt\tt}\right\|_2 \left\|\widehat{\bt} - \bt_0\right\|_2 \\
    &\leq \frac{1}{2}W_s(\bt_0)^{-1/2} \left\|\mathbf{w}(\bt_0)\right\|_2 \left\|\widehat{\bt} - \bt_0\right\|_2 \\
    &= \mathcal{O}(1) o_{\rm p}(1) = o_{\rm p}(1).
\end{aligned}
\end{equation*}
So \eqref{eq:appD:7} is equivalent to
\begin{equation*}
    W_s(\widehat{\bt})^{1/2} = W_s(\bt_0)^{1/2} + o_{\rm p}(1),
\end{equation*}
and thus, \eqref{eq:appD:6} can be re-evaluated as
\begin{equation}
\label{eq:appD:8}
    \sqrt{n}\mathbf{s}\tt\bm{\mathcal{A}} \left\{\mathbf{b}(\widehat{\bt},n) + \mathbf{v}_h(\widehat{\bt},n)\right\} \overset{d}{=} W_s(\bt_0)^{1/2}z_h + o_{\rm p}(1).
\end{equation}
Lastly, we want to evaluate the third term on the right hand side of \eqref{eq:appD:4}, $\sqrt{n}\mathbf{s}\tt \bm{\mathcal{A}} \mathbf{u}$. Assumption \ref{assum:E} implies that $\alpha=1/2$. Recall that from \eqref{eq:appC:3}, we have $\Delta_j = \mathcal{O}_{\rm p}(n^{-\alpha}) = \mathcal{O}_{\rm p}(n^{-1/2})$ for all $j=1,\ldots,p$, and that from \eqref{eq:appC:6}, we have $|g_{ijk}(\bt)| \leq 1/2 \underset{\bt\in\bT}{\max}|G_{ijk}(\bt)|$ for all $\bt \in \bT$. So under Assumption~\ref{assum:D}, we have
\begin{equation*}
\begin{aligned}
    \|\mathbf{u}\|_\infty &\leq \underset{i=1,\ldots,p}{\max} \sum_{j=1}^p \sum_{k=1}^p \left|g_{ijk}(\widehat{\bt})\right| \cdot \left|\Delta_j\Delta_k \right| \\
    &\leq \frac{1}{2} \underset{i=1,\ldots,p}{\max} \sum_{j=1}^p \sum_{k=1}^p \left\{\underset{\bt\in\bT}{\max} |G_{ijk}(\bt)| \right\} \cdot \left|\Delta_j\Delta_k \right| \\
    &=\mathcal{O}_{\rm p}(n^{-2\alpha}) = \mathcal{O}_{\rm p}(n^{-1}). 
\end{aligned}
\end{equation*}
Moreover, $\|\mathbf{s}\|_1 \leq p^{1/2} \|\mathbf{s}\|_2 = p^{1/2}$. Under Assumption \ref{assum:D}, $\left\|\mathbf{A}(\bt_0)\right\|_\infty <1$, so by Lemma~\ref{lem:1} in Appendix \ref{app:proof:JINI:bias:finite}, we have $\left\|\bm{\mathcal{A}}\right\|_\infty = \mathcal{O}(1)$. Thus, we obtain 
\begin{equation}
\label{eq:appD:9}
    \left|\sqrt{n}\mathbf{s}\tt \bm{\mathcal{A}} \mathbf{u}\right| \leq \sqrt{n} \|\mathbf{s}\|_1 \left\|\bm{\mathcal{A}}\mathbf{u}\right\|_\infty \leq \sqrt{n} \|\mathbf{s}\|_1 \left\|\bm{\mathcal{A}}\right\|_\infty \left\|\mathbf{u}\right\|_\infty = \mathcal{O}_{\rm p}(p^{1/2}n^{-1/2}) = o_{\rm p}(1),
\end{equation}
as by Assumption \ref{assum:B} we have $p=o(n^{2\alpha}) = o(n)$. Using the results of \eqref{eq:appD:5}, \eqref{eq:appD:8} and \eqref{eq:appD:9} in \eqref{eq:appD:4}, we have
\begin{equation*}
    \sqrt{n}\mathbf{s}\tt(\widehat{\bt}-\bt_0) \overset{d}{=} W_s(\bt_0) \left(z_0 + \frac{1}{H}\sum_{h=1}^H z_h\right) + o_{\rm p}(1) \overset{d}{=} \left(1+\frac{1}{H}\right)^{1/2} W_s(\bt_0)^{1/2} z + o_{\rm p}(1),
\end{equation*}
where $z \sim \mathcal{N}(0,1)$, and therefore,
\begin{equation*}
    \left(1+\frac{1}{H}\right)^{-1/2} \sqrt{n} \mathbf{s}\tt \left\{\bm{\mathcal{A}} \mathbf{\Sigma}(\bt_0) \bm{\mathcal{A}}\tt\right\}^{-1/2} (\widehat{\bt} - \bt_0) \overset{d}{\to} \mathcal{N}(0,1).
\end{equation*}
$\hfill\square$

\section{Proof of Theorem \ref{thm:ib}}
\label{app:proof:ib}
First, recall in \eqref{eqn:def:function:T} that 
\begin{equation}
\begin{aligned}
\label{eq:appE:1}
    \bm{T}(\bt, n) &= \bt + \widehat{\bpi}(\bt_0, n) - \frac{1}{H} \sum_{h=1}^H \widehat{\bpi}_h(\bt, n) \\
    &= \bt + \widehat{\bpi}(\bt_0, n) - \left\{\bt + \mathbf{a}(\bt) + \mathbf{b}(\bt,n) + \frac{1}{H}\sum_{h=1}^H \mathbf{v}_h(\bt, n) \right\} \\
    &= \widehat{\bpi}(\bt_0, n) - \left\{ \mathbf{a}(\bt) + \mathbf{b}(\bt,n) + \frac{1}{H}\sum_{h=1}^H \mathbf{v}_h(\bt, n) \right\}.
\end{aligned}
\end{equation}
This function is defined on $\bT$ with target space $\real^p$ and is only a function with respect to $\bt$ and $n$, as the seeds used to compute $\widehat{\bpi}_h(\bt, n)$ are fixed (see Appendix~\ref{app:imple:ib} for more details). In this proof, we first show that $\bm{T}(\bt,n)$ is a contraction map for sufficiently large $n$. It enables us to apply Kirszbraun theorem (see \citealp{federer2014geometric}) and Banach fixed-point theorem to show that $\bm{T}(\bt,n)$ admits a unique fixed point. 

Let us consider $\bt_1, \bt_2 \in \bT$. With \eqref{eq:appE:1}, we obtain
\begin{equation*}
    \bm{T}(\bt_2, n) - \bm{T}(\bt_1, n)
    =  \mathbf{a}(\bt_1) - \mathbf{a}(\bt_2) + \mathbf{b}(\bt_1, n) - \mathbf{b}(\bt_2, n) 
     + \frac{1}{H} \sum_{h=1}^H \left\{\mathbf{v}_h(\bt_1,n) - \mathbf{v}_h(\bt_2,n) \right\}.
\end{equation*}
Therefore, we have 
\begin{equation}
\begin{aligned}
\label{eq:appE:2}
    \|\bm{T}(\bt_2, n) - \bm{T}(\bt_1, n)\|_2 
    &\leq \left\| \mathbf{a}(\bt_2) - \mathbf{a}(\bt_1)\right\|_2 + \left\| \mathbf{b}(\bt_2, n) - \mathbf{b}(\bt_1, n)\right\|_2 
    \\ 
    & \hspace{0.5cm} + \frac{1}{H}\sum_{h=1}^H \left\|\mathbf{v}_h(\bt_2, n) - \mathbf{v}_h(\bt_1, n)\right\|_2.
\end{aligned}
\end{equation}
Considering the first term on the right hand side of \eqref{eq:appE:2}, by Assumption \ref{assum:D} we have 
\begin{equation}
\label{eq:appE:3}
    \|\mathbf{a}(\bt_2) - \mathbf{a}(\bt_1)\|_2 \leq M \| \bt_2 - \bt_1\|_2,
\end{equation}
where $0 < M < 1$. For the second term on the right hand side of \eqref{eq:appE:2}, by Assumption~\ref{assum:C} and using the mean value theorem, we have
\begin{equation}
\label{eq:appE:4}
    \|\mathbf{b}(\bt_2,n) - \mathbf{b}(\bt_1,n)\|_2 \leq \|\mathbf{B}(\bt^*, n)\|_2 \| \bt_2 - \bt_1\|_2 \leq \|\mathbf{B}(\bt^*, n)\|_F \| \bt_2 - \bt_1\|_2,
\end{equation}
where $\bt^*$ is on the line connecting $\bt_1$ and $\bt_2$ and $\|\cdot\|_F$ denotes the Frobenius norm. Moreover, by Assumption~\ref{assum:C} we have
\begin{equation}
\begin{aligned}
\label{eq:appE:5}
    \|\mathbf{B}(\bt^*, n)\|_F &= \sqrt{\sum_{i=1}^p \sum_{j=1}^p B(\bt^*, n)_{ij}^2} \leq \sqrt{p\cdot \underset{i=1,\ldots,p}{\max} \sum_{j=1}^p B(\bt^*, n)_{ij}^2} 
    \\
    &= p^{1/2} \underset{i=1,\ldots,p}{\max} \sqrt{\sum_{j=1}^p B(\bt^*, n)_{ij}^2} %
    \leq p^{1/2} \underset{i=1,\ldots,p}{\max} \sum_{j=1}^p |B(\bt^*, n)_{ij}| 
    \\    
    &= p^{1/2} \|\mathbf{B}(\bt^*, n)\|_\infty = \mathcal{O}(p^{1/2}n^{-\beta}).
\end{aligned}   
\end{equation}
For the third term on the right hand side of \eqref{eq:appE:2}, by Assumption \ref{assum:B} we have 
\begin{equation}
\label{eq:appE:6}
    \frac{1}{H} \sum_{h=1}^H \|\mathbf{v}_h(\bt_2, n) - \mathbf{v}_h(\bt_1,n)\|_2 = \mathcal{O}_{\rm p}(H^{-1/2}p^{1/2}n^{-\alpha}).
\end{equation}
So plugging in the results of \eqref{eq:appE:3}-\eqref{eq:appE:6} to \eqref{eq:appE:2}, we obtain
\begin{equation*}
\begin{aligned}
    \|\bm{T}(\bt_2, n) - \bm{T}(\bt_1, n)\|_2 &\leq M \left\|\bt_2 - \bt_1\right\|_2 + \|\mathbf{B}(\bt^*, n)\|_F \left\|\bt_2 - \bt_1\right\|_2 
    \\ 
    & \hspace{0.5cm} + \frac{1}{H}\sum_{h=1}^H \left\|\mathbf{v}_h(\bt_2, n) - \mathbf{v}_h(\bt_1, n)\right\|_2 
    \\
    &= \left\{M + \mathcal{O}(p^{1/2}n^{-\beta})\right\} \left\|\bt_2 - \bt_1\right\|_2 + \mathcal{O}_{\rm p}(H^{-1/2}p^{1/2}n^{-\alpha}).
\end{aligned}
\end{equation*}
Thus, for sufficiently large $n$ and $H$, there exists some $0 \leq \epsilon <1$ such that for all $\bt_1, \bt_2 \in \bT$, 
\begin{equation*}
    \|\bm{T}(\bt_2, n) - \bm{T}(\bt_1, n)\|_2 \leq \epsilon \|\bt_2 - \bt_1\|_2.
\end{equation*}
Using Kirszbraun theorem, we can extend $\bm{T}(\bt, n)$ to a contraction map from $\real^p$ to itself. Therefore, applying Banach fixed-point theorem, there exists a unique fixed point $\widehat{\bt}\in\real^p$, and $\widehat{\bt}\in\bT$ for sufficiently large $n$ and $H$.

It remains to show the convergence rate of the IB algorithm. First, we can observe that 
\begin{equation*}
    \left\|\bt^{(k)} - \widehat{\bt} \right\|_2 \leq \frac{\epsilon^k}{1-\epsilon} \left\|\bt^{(1)} - \bt^{(0)}\right\|_2,
\end{equation*}
and
\begin{equation*}
    \left\| \bt^{(1)} - \bt^{(0)} \right\|_2 \leq \left\| \bt^{(1)} - \widehat{\bt}\right\|_2 + \left\| \bt^{(0)} - \widehat{\bt} \right\|_2 \leq (\epsilon+1) \left\| \bt^{(0)} - \widehat{\bt}\right\|_2 = \mathcal{O}_{\rm p}\left(p^{1/2}\right).
\end{equation*}
Since $\bt^{(0)} = \widehat{\bpi}(\bt_0,n) \in \bT$, we have, 
under Assumption \ref{assum:A} and for all $k \in \mathbb{N}^{+}$, that
\begin{equation*}
    \left\|\bt^{(k)} - \widehat{\bt} \right\|_2 \leq \frac{\epsilon^k}{1-\epsilon} \left\|\bt^{(1)} - \bt^{(0)}\right\|_2 = \mathcal{O}_{\rm p}\left(p^{1/2}\epsilon^k\right) = \mathcal{O}_{\rm p}\left(p^{1/2}\exp(-ck)\right),
\end{equation*}
where $c$ is some positive real number. 
\hfill $\square$ 

\newpage
\section{Implementation of the IB Algorithm}
\label{app:imple:ib}

The IB algorithm can be implemented using the steps presented in Table \ref{tab:ib:algo}. We emphasize that it is necessary to use the same random number generator at each iteration in order to avoid introducing extra variation besides the current value of the parameter $\bt^{(k)}$, the only quantity that varies between iterations. For example, this can be achieved by generating $H$ samples $\mathbf{u}_h,h=1,\ldots,H$, of size $n$ from the uniform distribution and the simulated samples are obtained using $ F^{-1}_{\bt^{(k)}}\left(\mathbf{u}_h\right)$ at Step~3 in Table~\ref{tab:ib:algo}.
\begin{table}[!hbt]
\centering
\caption{IB algorithm}
\begin{tabular}{ll}
\toprule
Step 1 & Compute the initial estimator $\widehat{\bpi}(\bt_0,n)$ on the observed sample, say $\mathbf{X}(\bt_0)$. \\
Step 2 & Set $k = 0$, and let $\bt^{(k)} = \widehat{\bpi}(\bt_0,n)$. \\
Step 3 & Simulate $H$ samples from the model $F_{\bt^{(k)}}$ with fixed ``seed'' values, which\\
& are denoted by $\mathbf{X}_h^*\left(\bt^{(k)}\right)$ with $h=1,\ldots,H$. \\
Step 4 & Compute the value of the initial estimator on each of the $H$ simulated\\
& samples $\mathbf{X}_h^*\left(\bt^{(k)}\right)$ in Step 3, which are denoted by $\widehat{\bpi}_h\left(\bt^{(k)}, n\right)$ with\\ & $h=1,\ldots,H$. \\
Step 5 & Compute $\bt^{(k+1)}$ using $\bt^{(k+1)} = \bt^{(k)} + \widehat{\bpi}(\bt_0, n) - 1/H \sum_{h=1}^H \widehat{\bpi}_h\left(\bt^{(k)}, n\right)$. \\
Step 6 & Set $k = k+1$. \\
Step 7 & Repeat Steps 3 to 6 until either $\|\bt^{(k+1)} - \bt^{(k)}\|_2 < \epsilon$, where $\epsilon$ denotes a \\  
& tolerance level, or a pre-defined maximum number of iterations is reached. \\
\bottomrule
\end{tabular}
\label{tab:ib:algo}
\end{table}
%


\clearpage
\appendix
{\centerline{\Large \sc Supplementary Materials}}

\titleformat{\section}{\large\scshape}{\Alph{section}.}{1em}{}

\section{Logistic Regression}
\label{supp:logistic}
Logistic regression, one of the most popular models in the class of Generalized Linear Models (GLM) \citep[see e.g.][]{NeWe:72,McCuNe:89}, can be used to model binary outcomes conditionally on a set of covariates. The MLE is a suitable estimator, in terms of finite sample properties, when $n$ is relatively large compared to $p$. More precisely, as is proposed for example in medical studies, the Events Per Variable (EPV) quantity, defined as the number of occurrences of the least frequent outcome over the number of covariates, should be large enough  to avoid finite sample bias of the MLE. The EPV can often be used to choose the maximal number of covariates that is recommended for a logistic regression (see e.g. \citealp{AuSt:17} and the references therein). In pratice, a rule of thumb is typically at least 10 EPV (see e.g. \citealp{PeCoKeHoFe:96}). However, 
with an EPV smaller than 10, a situation that is quite frequently encountered in practical situations, the MLE becomes significantly biased and the resulting inference becomes unreliable. In this section, we propose to use the JINI estimator with the MLE as initial estimator, and through a simulation study in two high dimensional settings, we compare its finite sample performance to the ones of the MLE, the BBC estimator (also based on the MLE) and a bias reduced estimator proposed by \citet{KoFi:09}. 

Specifically, we consider the logistic regression with (independent) responses $y_i \in \{0,1\}$, $i=1,\ldots,n$, with linear predictor $\x_i\tt\boldsymbol{\beta}$ where $\x_i\in\real^p$ is a vector of fixed covariates, and with logit link ${\mu}_i(\bm{\beta})\vcentcolon = \mathbb{E}[Y_i\vert \x_i]  =\exp(\x_i\tt\bm{\beta})/\left\{1+\exp(\x_i\tt\boldsymbol{\beta})\right\}$. The MLE for $\bm{\beta}$ 
%
%
is computed using the \texttt{glm} function in the \texttt{stats} R package. The bias reduced MLE of \citet{KoFi:09}, which we denote as BR-MLE, is computed using the \texttt{brglm} function (with default parameters) in the \texttt{brglm} R package \citep{kosmidis2017brglm}. 

We consider two simulation settings provided in Table~\ref{tab:sim-logistic}, that can possibly occur in practice, including  balanced (Setting I) and  unbalanced (Setting II) outcomes. We consider large models with $p=200$ and choose $n$ as to provide EPV of $5$ and $3.75$ respectively, both of which are below the usually recommended value of $10$. Moreover, as done in Section~\ref{sec:logistic-misscla}, the covariates are simulated independently from a $\mathcal{N}(0,4^2/n)$ for Setting I and from a $\mathcal{N}(0.6,4^2/n)$ for Setting II, in order to ensure that the size of the log-odds ratio $\mathbf{x}_i\bm{\beta}$ does not increase with $n$, so that $\mu_i(\bm{\beta})$ is not trivially equal to either $0$ or $1$. 

\begin{table}[!htb]
     \centering
     \caption{Simulation settings for the logistic regression}
     \begin{tabular}{lrr}
 \toprule
 Parameters &  Setting I & Setting II \\
 \midrule
 $n=$ & $2000$ & $3000$ \\
 $\sum_{i=1}^ny_i\approx$ & $1000$ & $750$ \\
 EPV $\approx$ & 5 & 3.75 \\
 $\beta_1=\beta_2=$ & $5$ & $5$ \\
 $\beta_3=\beta_4=$ & $-7$ & $-7$ \\
 $\beta_5=\ldots=\beta_{200}=$ & $0$ & $0$ \\
 $H=$ & $200$ & $200$ \\
 Number of simulations $=$ & $1000$ & $1000$ \\
 \bottomrule
     \end{tabular}
     \label{tab:sim-logistic}
\end{table}

The simulation results are presented in  Figure~\ref{fig:sim-logistic-smr} as the (absolute) bias and RMSE of the estimators. Boxplots of the finite sample distributions of the considered estimators as well as a simulation-based approximation of the bias function of the initial estimator (i.e. the MLE) are provided in Figures~\ref{fig:sim-logistic-bxp}~and~\ref{fig:logistic-bias}\footnote{See Supplementary Material~\ref{supp:bias} for a detailed explanation on how the graphs are built.} respectively. The bias function of the MLE for the logistic regression appears linear in the balanced setting and approximately locally linear in the unbalanced one, which is in line with the results of \cite{sur2019modern}. Figure~\ref{fig:sim-logistic-smr} (and Figure \ref{fig:sim-logistic-bxp}) supports favorably the theoretical findings of Section~\ref{sec:bias:consist}. Indeed, in both settings, we observe that the MLE is significantly biased (except for parameters with true value of zero). On the other hand, the BBC estimator, the BR-MLE and the JINI estimator have a reduced bias, with the BR-MLE and the JINI estimator achieving (apparent) unbiasedness, which, for the JINI estimator, is suggested by the first result of Theorem \ref{thm:bias:finite}. In addition, the bias corrected estimators have a reduced RMSE, which suggests, in particular, that the JINI estimator can significantly correct the bias of the MLE without loss of RMSE compared to other bias correction methods. 
\begin{figure}
    \centering
    \includegraphics[width=12cm]{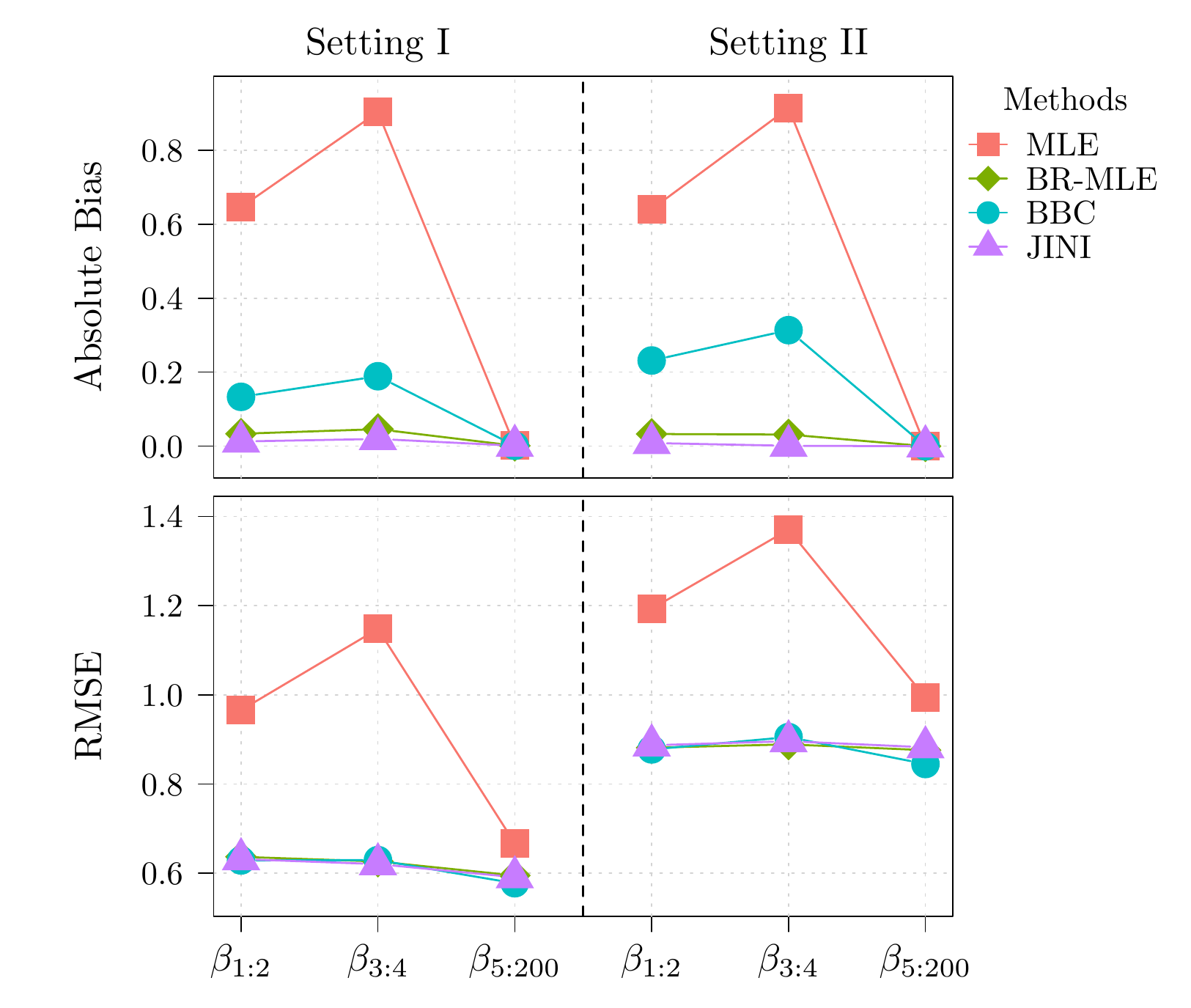}
    \caption{Finite sample absolute bias and RMSE of estimators for the logistic regression using the simulation settings presented in Table \ref{tab:sim-logistic}. The estimators are the MLE, the bias reduced MLE (BR-MLE), the BBC estimator and the JINI estimator based on the MLE. The estimators are grouped according to their parameter values.}
    \label{fig:sim-logistic-smr}
 \end{figure}
\begin{figure}
     \centering
     \includegraphics[width=12cm]{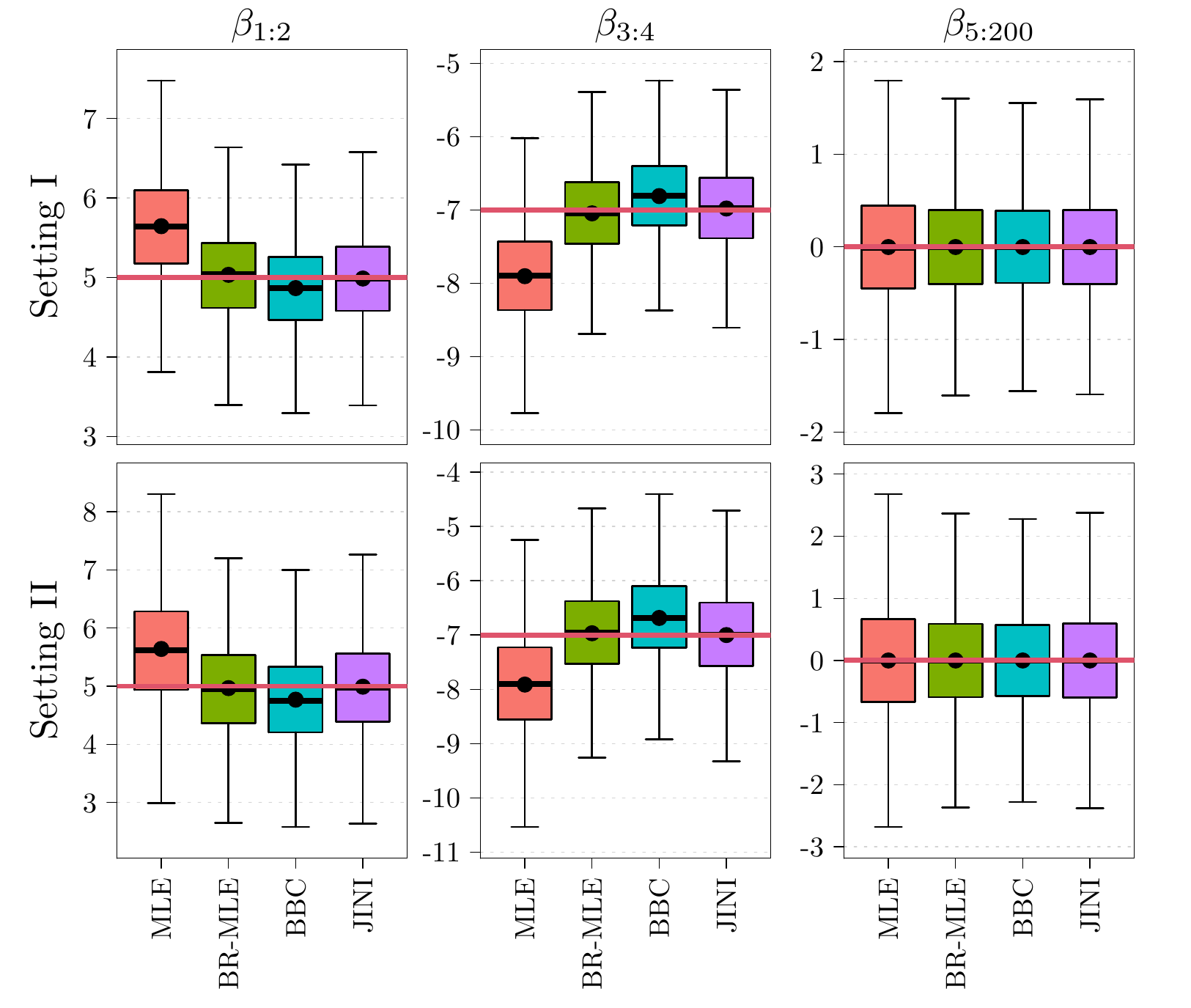}
     \caption{Finite sample distributions of estimators for the logistic regression using the simulation settings presented in Table \ref{tab:sim-logistic}. The estimators are the MLE, the bias reduced MLE (BR-MLE), the BBC and the JINI estimators based on the MLE. The  lines indicate the true parameter values and the dots locate the mean simulation values. The estimators are grouped according to their parameter values.}
     \label{fig:sim-logistic-bxp}
 \end{figure}
\begin{figure}
    \centering
     \includegraphics[width=12cm]{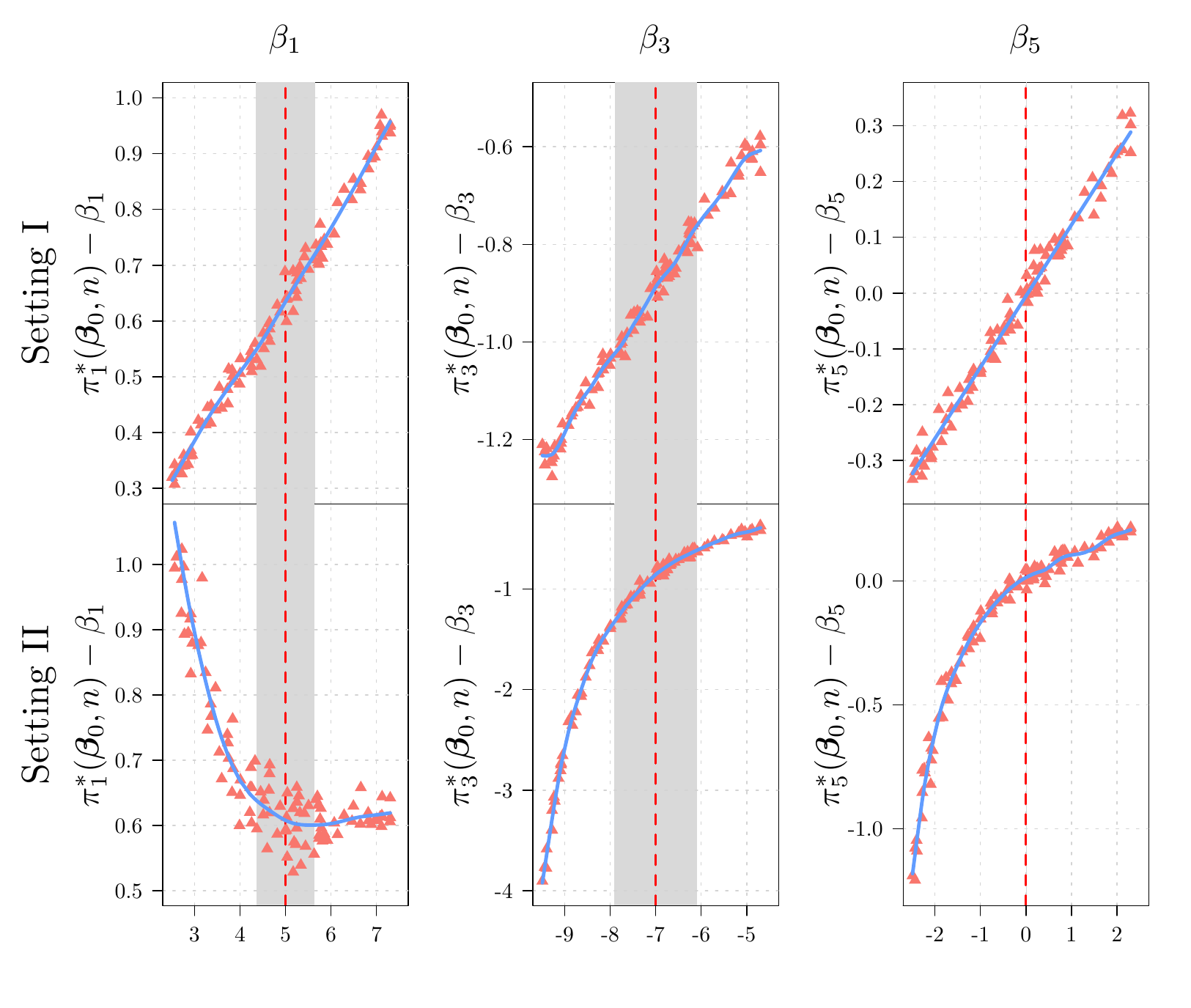}
     \caption{Simulation-based approximation of the bias function of the initial estimator (i.e. the MLE) of the logistic regression using the simulation settings presented in Table~\ref{tab:sim-logistic}. The dashed line corresponds to the true parameter value used in the simulation and the rectangle region represents a plausible neighborhood of the true parameter value. Notice that the rectangle regions of $\beta_5$ in both settings are barely visible as there is approximately no empirical (absolute) bias of the initial estimator.}
     \label{fig:logistic-bias}
\end{figure}

\newpage
\section{Logistic Regression with Random Intercept}
\label{supp:glmm}

A natural way of extending the logistic regression to account for the dependence structure between observed responses is to use the GLMM family \citep[see e.g.][and the references therein]{LeNe:01,McCuSe:01,JiangBook2007}. In this section, we consider a commonly used model in practice, namely the logistic regression with random intercept model. Specifically, we denote each binary response as $y_{ij}$, where $i=1,\ldots,m$ with $m \in \mathbb{N}^+$ as the number of clusters and $j=1,\ldots,n_i$ with $n_i \in \mathbb{N}^+$ as the number of observations for the $i^{th}$ cluster. Then, the (conditional) expected value of the response can be expressed as
\begin{equation*}
    \mu_{ij}(\bm{\beta}\vert U_i) \vcentcolon = \mathbb{E}\left[Y_{ij}\vert U_i\right] = \frac{\exp{\left( \mathbf{x}_{ij}\tt\bm{\beta}+U_i\right)}}{1+\exp{\left(\mathbf{x}_{ij}\tt\bm{\beta}+U_i\right)}},
\end{equation*}
where $\mathbf{x}_{ij}$ is the covariate vector of length $p$ associated to response $y_{ij}$, $\bm{\beta}$ is the vector of fixed effects parameters of length $p$, and $U_i$ is a normal random variable with zero mean and unknown variance $\sigma^2$ which represents the random effect associated to the $i^{th}$ cluster.

Since the random effects are not observed, the MLE for the fixed effects $\bm{\beta}$ is derived on the marginal likelihood function, where the random effects are integrated out. As these integrals have no closed-form solutions, approximations to the marginal likelihood function have been proposed, including Penalized Quasi-Likelihood (PQL) \citep[see e.g.][]{BrCl:93}, Laplace Approximations (LA) \citep[see e.g.][]{RaYaYo:00} and adaptive Gauss-Hermite Quadrature (GHQ) \citep[see e.g.][]{PiCh:06}. However, it is known that PQL leads to biased estimators, and both LA and GHQ are more accurate but are often numerically unstable especially when $p/n$ is relatively large \citep[see e.g.][]{BOLKER2009127,KiChEm:13}. In this section, we consider an approximation of the MLE defined through Penalized Iteratively Reweighted Least Squares (PIRLS) \citep[see e.g.][]{BaMaBo:10}. This approximated MLE  is relatively accurate while remaining computationally efficient. It is implemented in the \texttt{glmer} function  of the \texttt{lme4} R package (with argument \texttt{nAGQ} set to 0). We propose here to consider the BBC and the JINI estimators with the approximated MLE as initial estimator. 

By means of a simulation study with settings presented in Table \ref{tab:sim-glmm}, the finite sample performance in terms of (absolute) bias and RMSE of the approximated MLE, the BBC and the JINI estimators are compared. We consider two high dimensional simulation settings, with, in Setting I, a small number of clusters ($m=5$) and a large number of measurements per cluster ($n_i=50 \; \forall i$), a situation that is frequently encountered in cluster randomized trials \citep[see e.g.][and the references therein]{CRT-Huang-2016,CRT-Leyrat-2017}. On the other hand, in Setting II the number of clusters ($m=50$) is larger than the number of measurements within clusters ($n_i=5 \; \forall i$), which possibly reflects a more frequently encountered case. The covariates  $x_{ij,2}, \ldots, x_{ij,30}$ are simulated independently from a $\mathcal{N}(0, 2^2/n)$, and $x_{ij,1}$ is the intercept. 

\begin{table}
     \centering
     \caption{Simulation settings for the logistic regression with random intercept}
     \begin{tabular}{lrr}
 \toprule
 Parameters &  Setting I & Setting II \\
 \midrule
 $m=$ & $5$ & $50$ \\
 $\forall i, \;n_i=$ & $50$ & $5$\\
 $n=$ & $250$ & $250$ \\
 $\sum_{i=1}^m\sum_{j=1}^{n_i} y_{ij}\approx$ & $125$ & $125$   \\
 EPV $\approx$ & $4$ & $4$ \\
 $\beta_1=$ & $0$ & $0$ \\
 $\beta_2=\beta_3=$ & $5$ & $5$ \\
 $\beta_4=\beta_5=$ & $-7$ & $-7$ \\
 $\beta_6=\ldots=\beta_{30}=$ & $0$ & $0$ \\
 $\sigma^2=$ & $2$ & $2$ \\
 $H=$ & $200$ & $200$ \\
 Number of simulations $=$ & $1000$ & $1000$ \\
 \bottomrule 
     \end{tabular}
     \label{tab:sim-glmm}
\end{table}

The (absolute) bias and RMSE of the estimators are provided in Figure \ref{fig:sim-glmm-summary} while the boxplots of the finite sample distributions and the simulation-based approximation of the bias function of the initial estimator (i.e. the approximated MLE) are provided in Figures~\ref{fig:sim-glmm-boxplot}~and~\ref{fig:glmm-bias}\footnote{See Supplementary Material~\ref{supp:bias} for a detailed explanation on how the graphs are built.} respectively. The bias function of the MLE appears to be locally linear in both settings, and we observe, in Figure \ref{fig:sim-glmm-summary} (and Figure \ref{fig:sim-glmm-boxplot}) that the MLE is significantly biased (except for the fixed effects parameters with true value of zero). The BBC estimator does not correct the bias in general, and in Setting II, the BBC estimator introduces more bias than the MLE for non-zero fixed effects parameters. On the other hand, the JINI estimator drastically corrects the bias of the MLE in both settings, including for the random effect variance. In terms of RMSE, the BBC and the JINI estimators have approximately the same RMSE in both settings. Moreover, their RMSE are roughly the same as the MLE in Setting I, and are lower than the MLE in Setting II. Such bias correction outperformance of the JINI estimator without loss of RMSE brings important advantages when performing inference in practice, and/or when the parameter estimates, such as the random effect variance estimate, are used, for example, to evaluate the sample size needed in subsequent randomized trials. The JINI estimator can possibly be based on other initial estimators in order to further improve efficiency for example, however this study is left for further research.

\begin{figure}
    \centering
    \includegraphics[width=12cm]{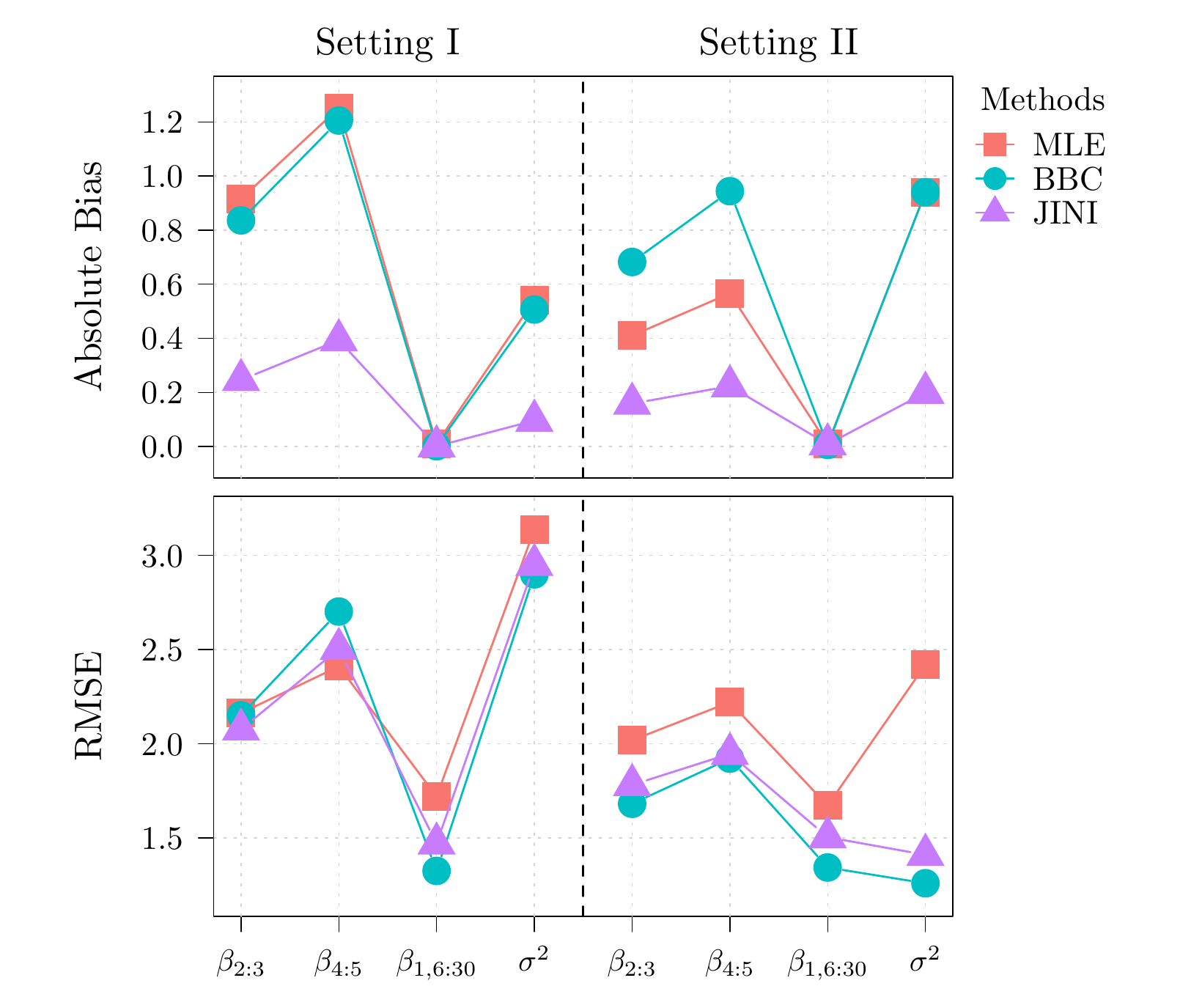}
   \caption{Finite sample absolute bias and RMSE of estimators for the logistic regression with random intercept, using the simulation settings presented in Table \ref{tab:sim-glmm}. The estimators are the approximated MLE, the BBC and the JINI estimators with the approximated MLE as initial estimator. The estimators are grouped according to their parameter values.}
    \label{fig:sim-glmm-summary}
\end{figure}
\begin{figure}
    \centering
    \includegraphics[width=12cm]{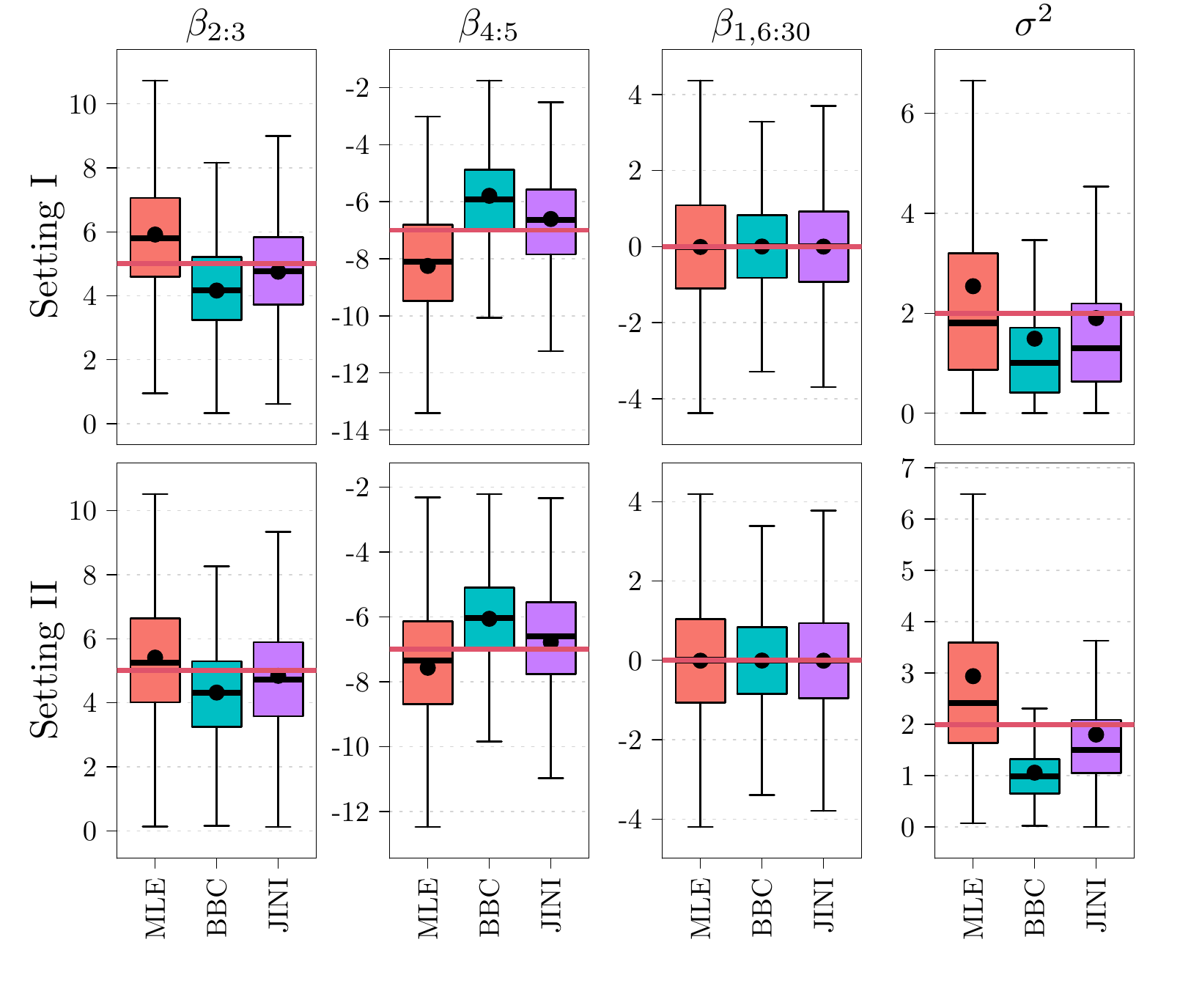}
   \caption{Finite sample distribution of estimators for the logistic regression with random intercept, using the simulation settings presented in Table \ref{tab:sim-glmm}. The estimators are the approximate MLE, the BBC and the JINI estimators with the approximate MLE as initial estimator. The lines indicate the true parameter values and the dots locate the mean simulation values. The estimators are grouped according to their parameter values.}
    \label{fig:sim-glmm-boxplot}
\end{figure}
\begin{figure}
    \centering
     \includegraphics[width=12cm]{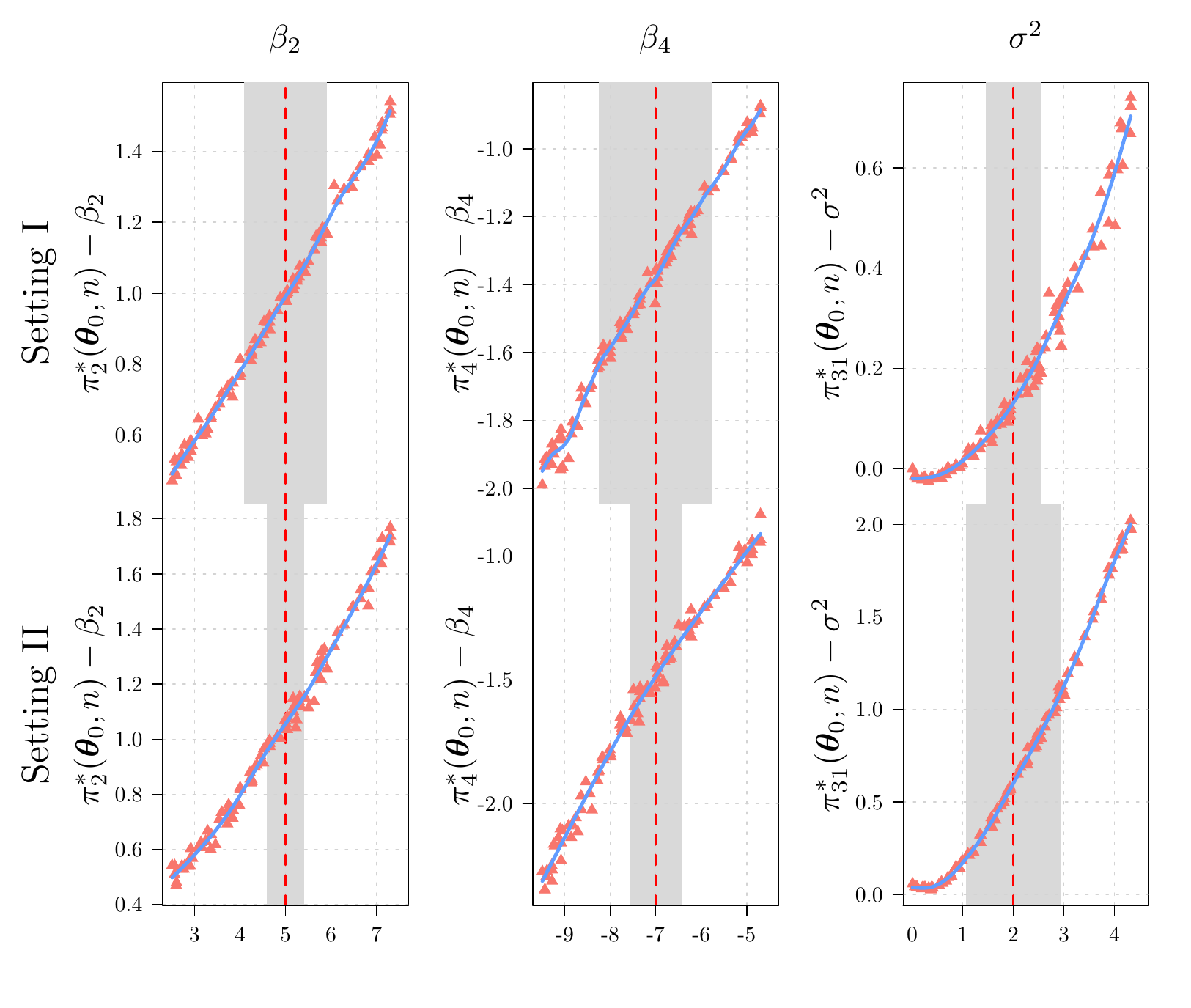}
     \caption{Simulation-based approximation of the bias function of the initial estimator (i.e. the approximated MLE) of the logistic regression with random intercept using the simulation settings presented in Table~\ref{tab:sim-glmm}. The dashed  line corresponds to the true parameter value used in the simulation and the rectangle region represents a plausible neighborhood of the true parameter value.}
     \label{fig:glmm-bias}
\end{figure}

\newpage
\section{Censored Poisson Regression}
\label{supp:cens-pois}

The analysis of count response data given multiple covariates can usually be done using the Poisson regression, which belongs to the class of GLM. In practice, however, the data is often exposed to some form of censoring. For example, the observed count response can be censored above a known threshold, say $C$, which is imposed by the survey design or is used to reflect relevant theoretical constraints \citep[see e.g.][]{terza1985tobit}. In this case, due to the existence of censoring, the MLE for the classical (uncensored) Poisson regression becomes biased and inconsistent. So adjustments of the likelihood function are necessary to take into account the censoring mechanism \citep[see e.g.][]{Bran:92}. Alternatively, some researchers have also proposed the use of multiple imputation \citep{Rubin:1987} to deal with censored responses \citep[see e.g.][and the references therein]{VanBuur:06,VanBurr:18}. In general, the derivation and/or computation of the MLE based on a modified likelihood function can be challenging, especially when $p/n$ is relatively large and/or when a significant proportion of data is censored. 

Specifically, with a sample of $n$ independent subjects, we denote $y_i^*$ as the actual (unobserved) count data of the $i^{th}$ subject and define the censored (observed) count data as $y_i \vcentcolon = \min(y_i^*, C)$. 
The response $y_i^*$ is linked to the vector of (fixed) covariates $\mathbf{x}_i \in \real^p$, with $\bm{\beta}$ as the regression coefficients, through the exponential link function $\mu_i \vcentcolon = \mathbb{E}[Y^*_i \vert \mathbf{x}_i] = \exp(\mathbf{x}_i\tt \bm{\beta})$.
%
%
The MLE with censored Poisson responses can be computed using the \texttt{vglm} function (with the argument \texttt{family = cens.poisson}) in the \texttt{VGAM} R package \citep[see e.g.][]{yee2015vector}. As an alternative to the MLE for censored Poisson regression, we propose to construct the JINI estimator with the MLE for the uncensored Poisson regression (which is inconsistent in this case) as the initial estimator. This MLE is implemented in the \texttt{glm} function (with the argument \texttt{family = poisson}) in the \texttt{stats} R package. Then, we compare the performance of the JINI estimator to the MLE for the censored Poisson regression as a benchmark.  In this example, we simulate the covariates in the same way as in Section~\ref{sec:app:consist}: $x_{i1}$ is the intercept, $x_{i2}$ is from a $\mathcal{N}(0,1)$, $x_{i3}$ is such that the first half of values being zeros and the remaining being ones, and $x_{i4}, \ldots, x_{ip}$ are simulated from a $\mathcal{N}(0,4^2/n)$ independently. More details regarding the simulation setting can be found in Table \ref{tab:sim-cens-pois}.

Figure \ref{fig:cens-pois-smr} shows the (absolute) bias and RMSE of the MLE of the censored Poisson regression and the JINI estimator  based on the classical MLE of the Poisson regression that ignores the censoring mechanism. The boxplots of the finite sample distributions and the simulation-based approximation of the bias function of the initial estimator (i.e. the classical MLE that ignores the censoring mechanism) can be found in Figures~\ref{fig:cens-pois-bxp}~and~\ref{fig:cens-pois-bias}\footnote{See Supplementary Material~\ref{supp:bias} for a detailed explanation on how the graphs are built.} receptively. We observe in Figure~\ref{fig:cens-pois-smr} that the benchmark MLE, despite consistent, shows significant bias  (except for parameters with true value of zero) because of the high dimensional setting. On the other hand, although the JINI estimator is based on an inconsistent initial estimator, it shows almost no bias for all parameters. This is due to the rather quadratic bias function of the classical MLE that ignores the censoring mechanism as observed in Figure \ref{fig:cens-pois-bias}, as suggested by Theorem~\ref{thm:bias:asymp} in Section~\ref{sec:bias:inconsist}. In terms of RMSE, the JINI estimator is also comparable to (or even better than) the MLE for all parameters. This example highlights the bias correction advantage of the JINI estimator without loss of RMSE. Moreover, since it is based on an initial inconsistent estimator that is readily available, in cases where the MLE for the correct model is not numerically implemented, the JINI estimator can be used so that analytical and/or numerical challenges are avoided. Such an example is proposed in Section~\ref{sec:logistic-misscla}.

\begin{table}[!tb]
     \centering
     \caption{Simulation setting for the censored Poisson regression}
     \begin{tabular}{lr}
 \toprule
 Parameters &  Values \\
 \midrule
 $n=$ & $200$\\
 $\beta_1=$ & $0.5$ \\
 $\beta_2=$ & $0.8$ \\
 $\beta_3=$ & $-0.4$ \\
 $\beta_4= \ldots = \beta_{50} = $ & $0$ \\
 $C=$ & $5$\\
 Censoring percentage $\approx$ & $7\%$\\
 $H=$ & $200$ \\
 Number of simulations $=$ & $1000$ \\
 \bottomrule 
     \end{tabular}
     \label{tab:sim-cens-pois}
\end{table}

\begin{figure}
    \centering
     \includegraphics[width=12cm]{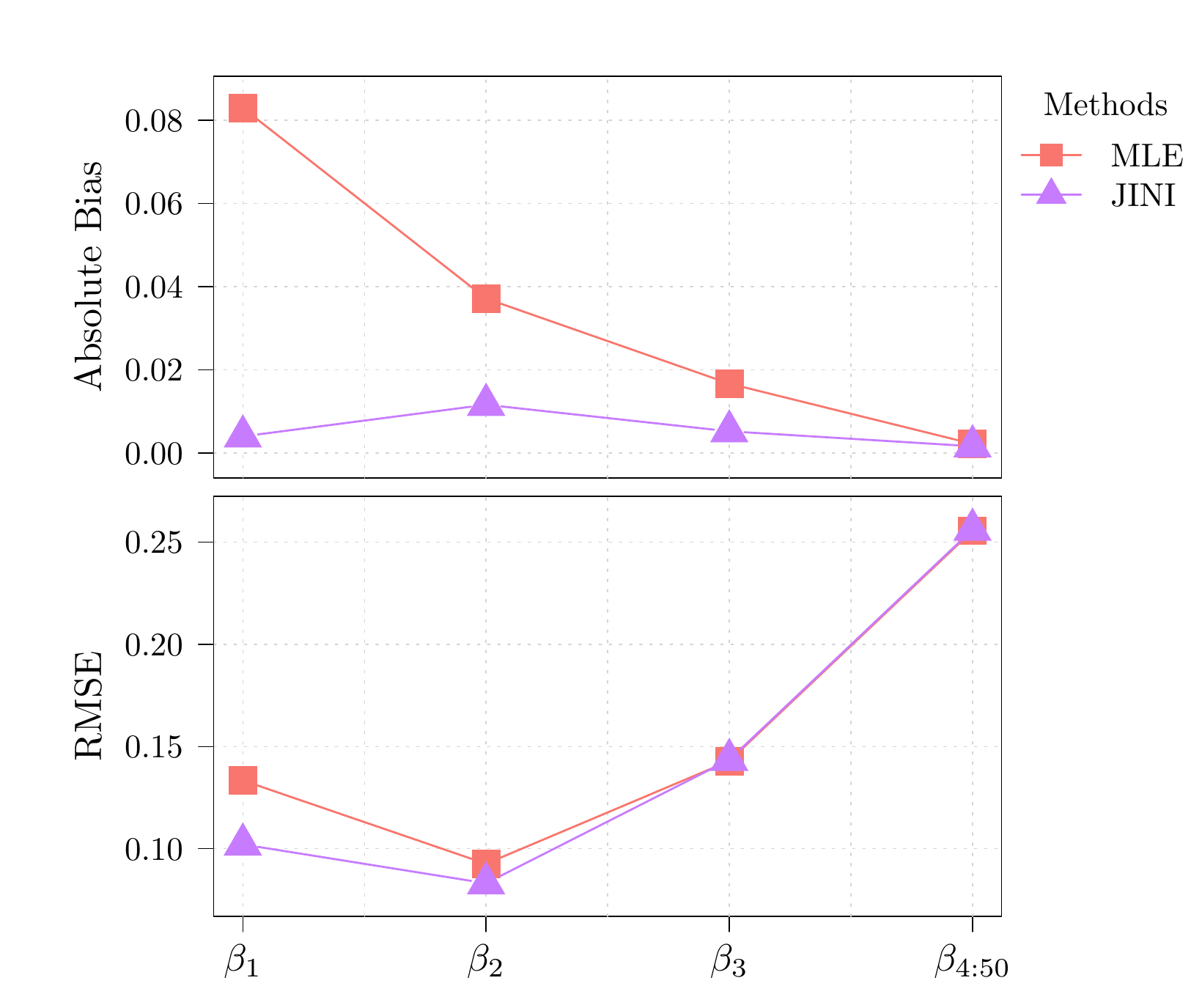}
     \caption{Finite sample absolute bias and RMSE of estimators for the censored Poisson regression  using the simulation setting presented in Table \ref{tab:sim-cens-pois}. The estimators are the MLE for the censored Poisson regression and the JINI estimator based on the classical MLE for the Poisson regression that ignores the censoring mechanism. The estimators are grouped according to their parameter values.}
     \label{fig:cens-pois-smr}
\end{figure}

\begin{figure}
    \centering
     \includegraphics[width=12cm]{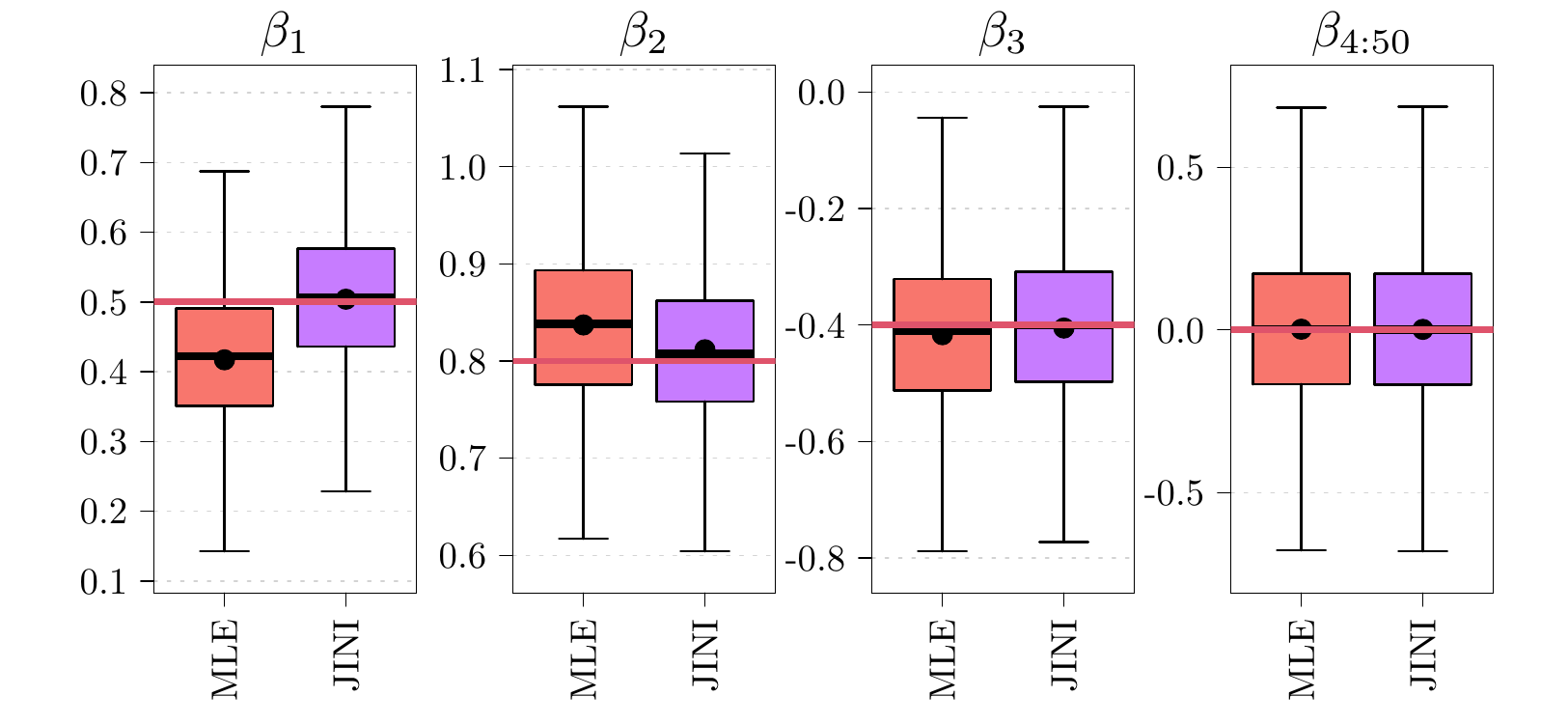}
     \caption{Finite sample distributions of estimators for the censored Poisson regression using the simulation setting presented in Table \ref{tab:sim-cens-pois}. The estimators are the MLE of the censored Poisson regression and the JINI estimator based on the classical MLE of Poisson regression that ignores the censoring mechanism. The lines indicate the true parameter values and the dots locate the mean simulation values. The estimators are grouped according to their parameter values.}
     \label{fig:cens-pois-bxp}
\end{figure}

\begin{figure}
    \centering
     \includegraphics[width=12cm]{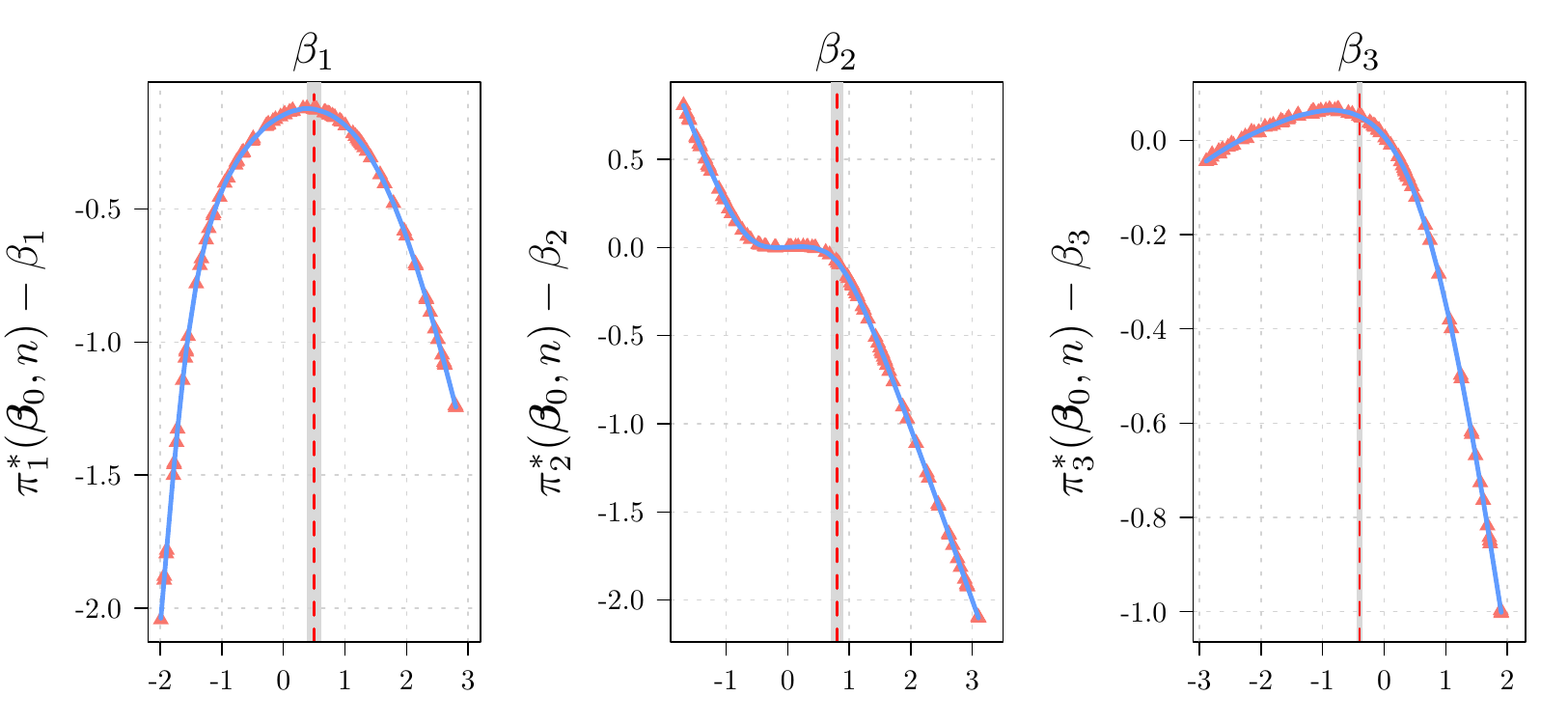}
     \caption{Simulation-based approximation of the bias function of the initial estimator (i.e. the classical MLE that ignores the censoring mechanism) of the censored Poisson regression using the simulation setting presented in Table \ref{tab:sim-cens-pois}. The dashed line corresponds to the true parameter value used in the simulation and the rectangle region represents a  plausible neighborhood of the true parameter value.}
     \label{fig:cens-pois-bias}
\end{figure}

\newpage
\section{Number of Iterations of the IB Algorithm}
\label{supp:ib-num-iter}

In this section, we present a figure that summarizes the number of iterations needed for the IB algorithm to construct the corresponding JINI estimators considered in Sections~\ref{sec:app:consist} and \ref{sec:app:inconsist} as well as in Supplementary Material~\ref{supp:logistic}. Figure~\ref{fig:ib-num-iter} suggests that the IB algorithm takes more iterations to converge when considering an inconsistent initial estimator. However, in all the situations we consider, the number of iterations needed for the IB algorithm appears to be relatively small and the computations typically converge in less than 15 iterations. 

\begin{figure}[ht]
    \centering
    \includegraphics[width=12cm]{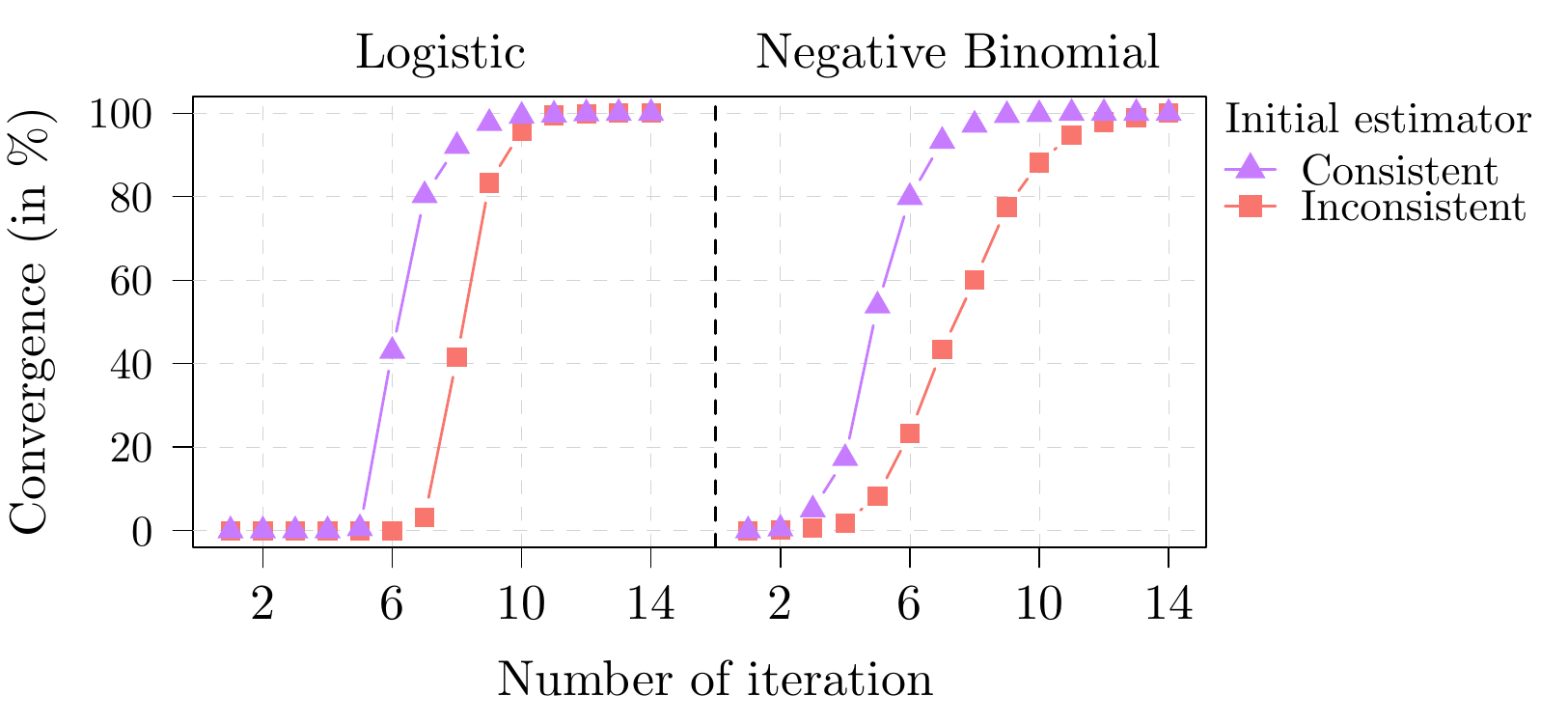}
    \caption{Summary of the number of iterations needed for the IB algorithm to construct the corresponding JINI estimators considered in Sections \ref{sec:app:consist} and \ref{sec:app:inconsist}, as well as in Supplementary Material~\ref{supp:logistic}. The graph on the left concerns the cases of the logistic regression. In this graph, the triangle dotted lines correspond to the case of Setting I of the logistic regression (Supplementary Material~\ref{supp:logistic}, with a consistent initial estimator), while the square dotted lines correspond to the case of Setting I of the logistic regression with misclassified responses (Section~\ref{sec:logistic-misscla}, with an inconsistent initial estimator). The graph on the right concerns the cases of the negative binomial regression. In this graph, the triangle dotted lines correspond to the case of negative binomial regression (Section~\ref{sec:app:consist}, with a consistent initial estimator), while the square dotted lines correspond to the case of negative binomial regression with censored responses (Section~\ref{sec:NB-censoring}, with an inconsistent initial estimator). 
    }
    \label{fig:ib-num-iter}
 \end{figure}

\newpage
\section{Boxplots of Finite Sample Distributions of Estimators}
\label{supp:boxplots}

In this section, we present the boxplots of the finite sample distributions of estimators considered in Sections \ref{sec:app:consist} and \ref{sec:app:inconsist}. More precisely, Figure~\ref{fig:sim-neg_bin-bxp} corresponds to the negative binomial regression (Section~\ref{sec:app:consist}), and Figure~\ref{fig:NB-censoring-bxp} corresponds to the censored negative binomial regression (Section~\ref{sec:NB-censoring}).

\begin{figure}[h]
     \centering
     \includegraphics[width=12cm]{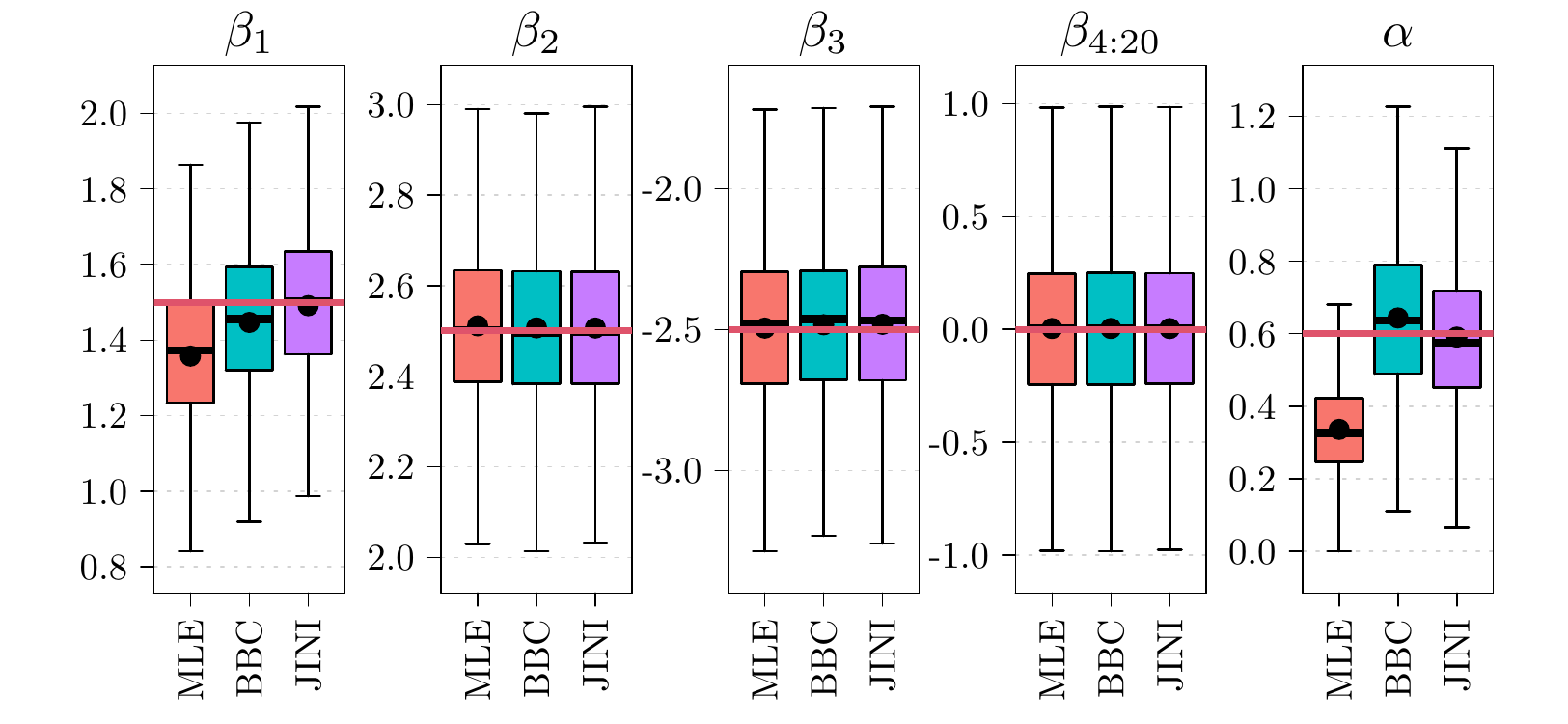}
     \caption{Finite sample distributions of estimators for the negative binomial regression using the simulation setting presented in Table \ref{tab:sim-negbin}. The estimators are the MLE, the BBC and the JINI estimators based on the MLE. The lines indicate the true parameter values and the  dots locate the mean simulation values. The estimators are grouped according to their parameter values.}
     \label{fig:sim-neg_bin-bxp}
\end{figure}

\begin{figure}
    \centering
     \includegraphics[width=12cm]{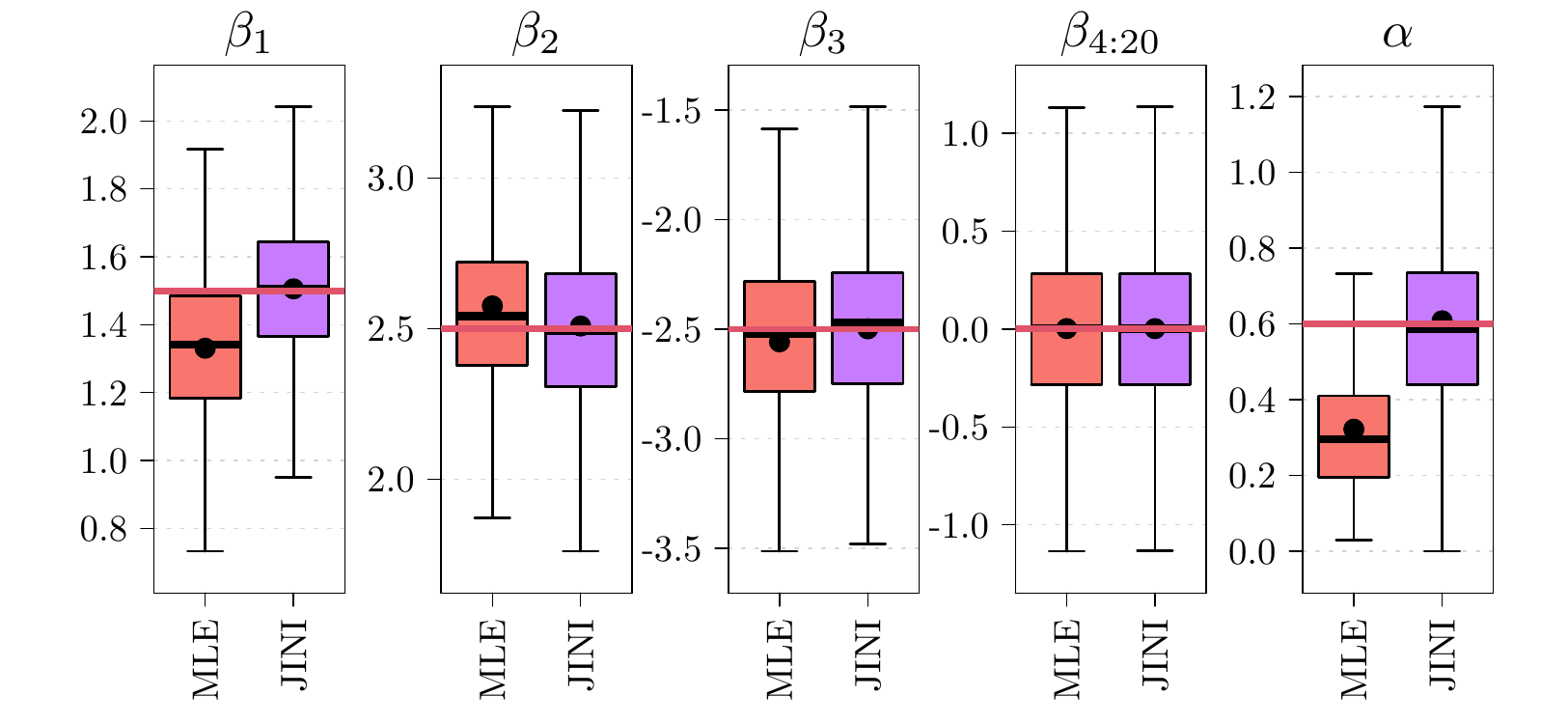}
     \caption{Finite sample distributions of estimators for the censored negative binomial regression  using the simulation settings presented in Table \ref{tab:sim-NB-censoring}. The estimators are the the MLE for the censored negative binomial regression model, and the JINI estimator with the classical MLE for the negative binomial regression model that ignores the censoring as initial estimator. The lines indicate the true parameter values and the dots locate the mean simulation values. The estimators are grouped according to their parameter values.}
     \label{fig:NB-censoring-bxp}
\end{figure}

\clearpage
\section{Simulation-Based Approximations of Bias Functions}
\label{supp:bias}

In this section, we present the simulation-based approximations of the bias functions of the initial estimators $\widehat{\bpi}(\bt_0, n)$ considered in Sections \ref{sec:app:consist} and \ref{sec:app:inconsist}, based on which we construct the JINI estimator (and the BBC estimator). We recall in Section~\ref{sec:existing:methods} that $\mathbf{d}^{*}(\bt, n) = \bpi^*(\bt, n) - \bt$, where $\bpi^*(\bt, n)$ is an approximation for $\bpi(\bt,n)$ given in \eqref{eqn:pi-star}. Therefore, to approximate the bias function,  
we use 
\begin{equation*}
    \mathbf{d}^{*}(\bt_i, n) = \bpi^*(\bt_i, n) - \bt_i,
\end{equation*}
where $\bt_i=[\theta_j]_{j=1,\ldots,p}$ with $\theta_j=\theta_{0j}\;\forall j\neq i$, while $\theta_j, j=i$, varies around $\theta_{0i}$. This is done similarly to \cite{sur2019modern} for graphical representations of the bias functions. 
%
%
In addition, in the computation of $\bpi^*(\bt,n)$ to obtain an unbiased approximation for $\bpi(\bt,n)$, we use $H=10^3$ when the sample size is relatively large (i.e. for the logistic regression in Figure \ref{fig:logistic-bias} and for the logistic regression with misclassified responses in Figure \ref{fig:sero-bias}), and $H=10^4$ for the other simulation settings.

Figures \ref{fig:nb-bias1} to \ref{fig:sero-bias} present the simulation-based approximations of the bias functions of the initial estimators considered in Sections \ref{sec:app:consist} and \ref{sec:app:inconsist}. In each graph, we only report the simulation-based approximations of the bias functions regarding three parameters with different values.

Moreover, we also include a  rectangle region which represents a plausible neighborhood of the true parameter. This neighborhood is obtained using the true parameter in the simulation settings plus and minus the empirical absolute bias of the initial estimators. One shall mainly refer to the form of the bias function inside this  rectangle region when validating the assumptions discussed in Section \ref{sec:bias}.

Overall, we observe that the bias functions of the initial estimators we consider appear to be relatively smooth, even linear, at least over a plausible neighborhood of the true parameter values. This highlights the fact that the assumptions considered in Section \ref{sec:bias} appear to be reasonable approximations to the true bias functions, and possibly provide a reasonable explanation to the outperformance of the JINI estimator in terms of bias correction that is in line with the theoretical findings discussed in Section~\ref{sec:bias}.

\begin{figure}
    \centering
     \includegraphics[width=12cm]{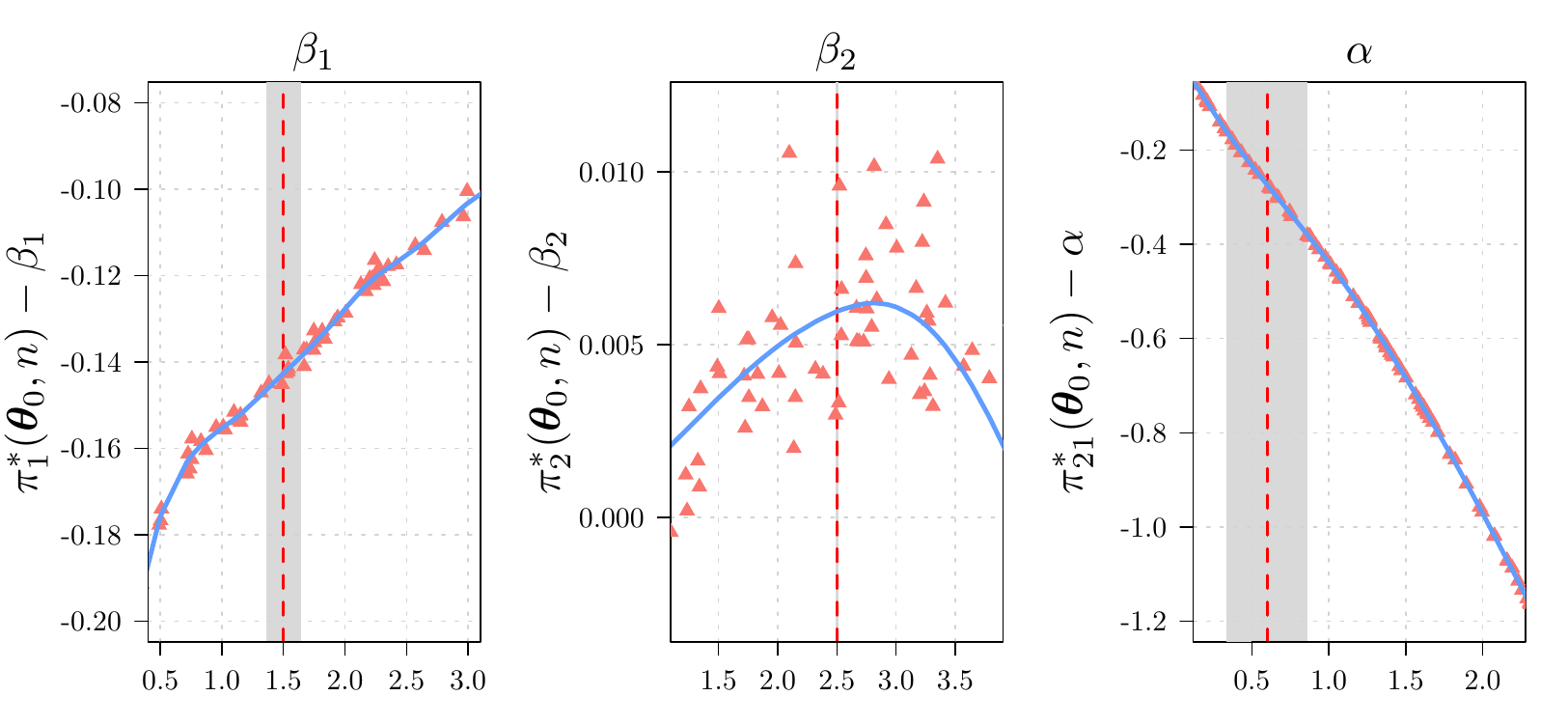}
     \caption{Simulation-based approximation of the bias function of the initial estimator (i.e. the MLE) of the negative binomial regression using the simulation setting presented in Table~\ref{tab:sim-negbin}. The dashed line corresponds to the true parameter value used in the simulation and the rectangle region represents a plausible neighborhood of the true parameter value. Notice that the rectangle region of $\beta_2$ is barely visible as there is approximately no empirical (absolute) bias of the initial estimator.}
     \label{fig:nb-bias1}
\end{figure}

\begin{figure}
    \centering
     \includegraphics[width=12cm]{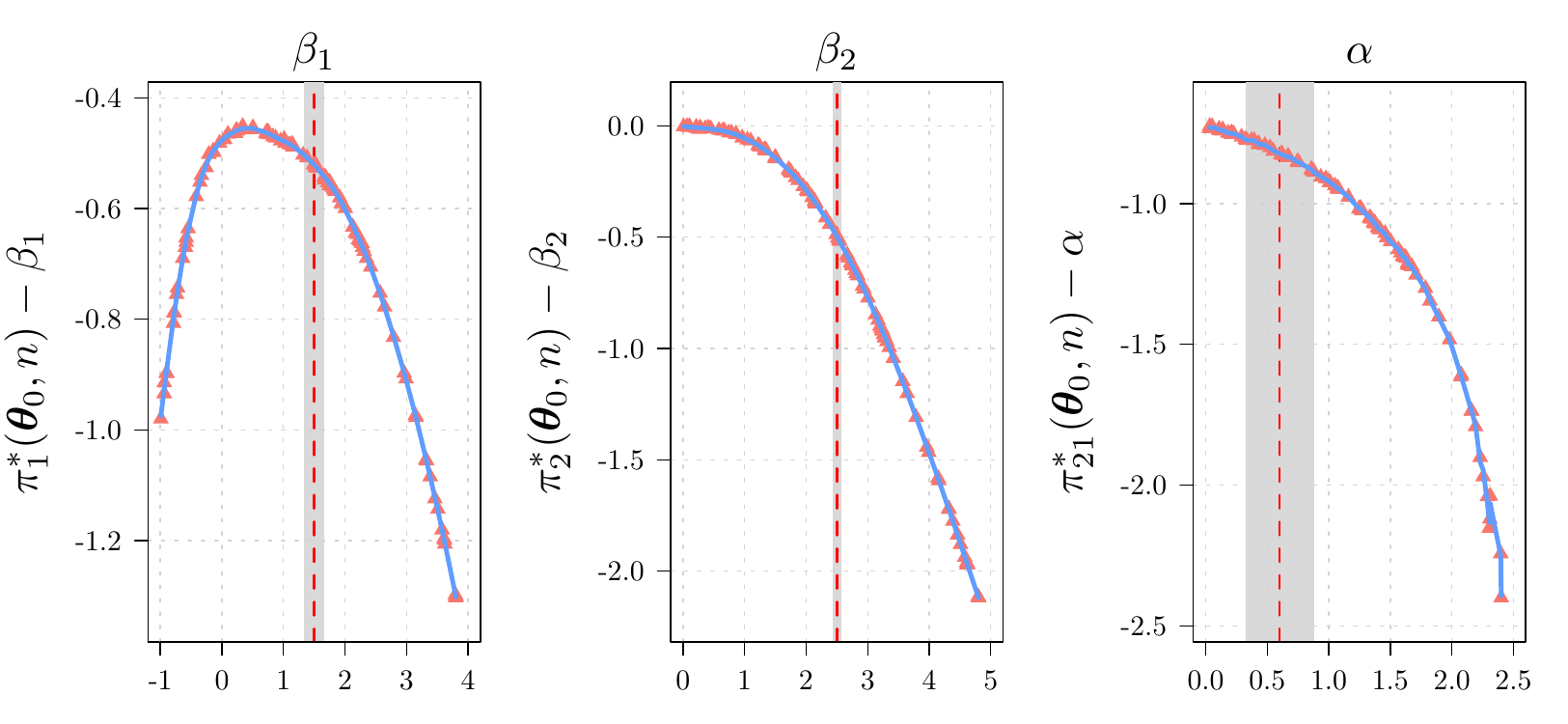}
     \caption{Simulation-based approximation of the bias function of the initial estimator (i.e. the classical MLE that ignores the censoring mechanism) of the censored negative binomial regression using the simulation settings presented in Table \ref{tab:sim-NB-censoring}. The dashed line corresponds to the true parameter value used in the simulation and the rectangle region represents a plausible neighborhood of the true parameter value. }
     \label{fig:nb-bias2}
\end{figure}

\begin{figure}
    \centering
     \includegraphics[width=12cm]{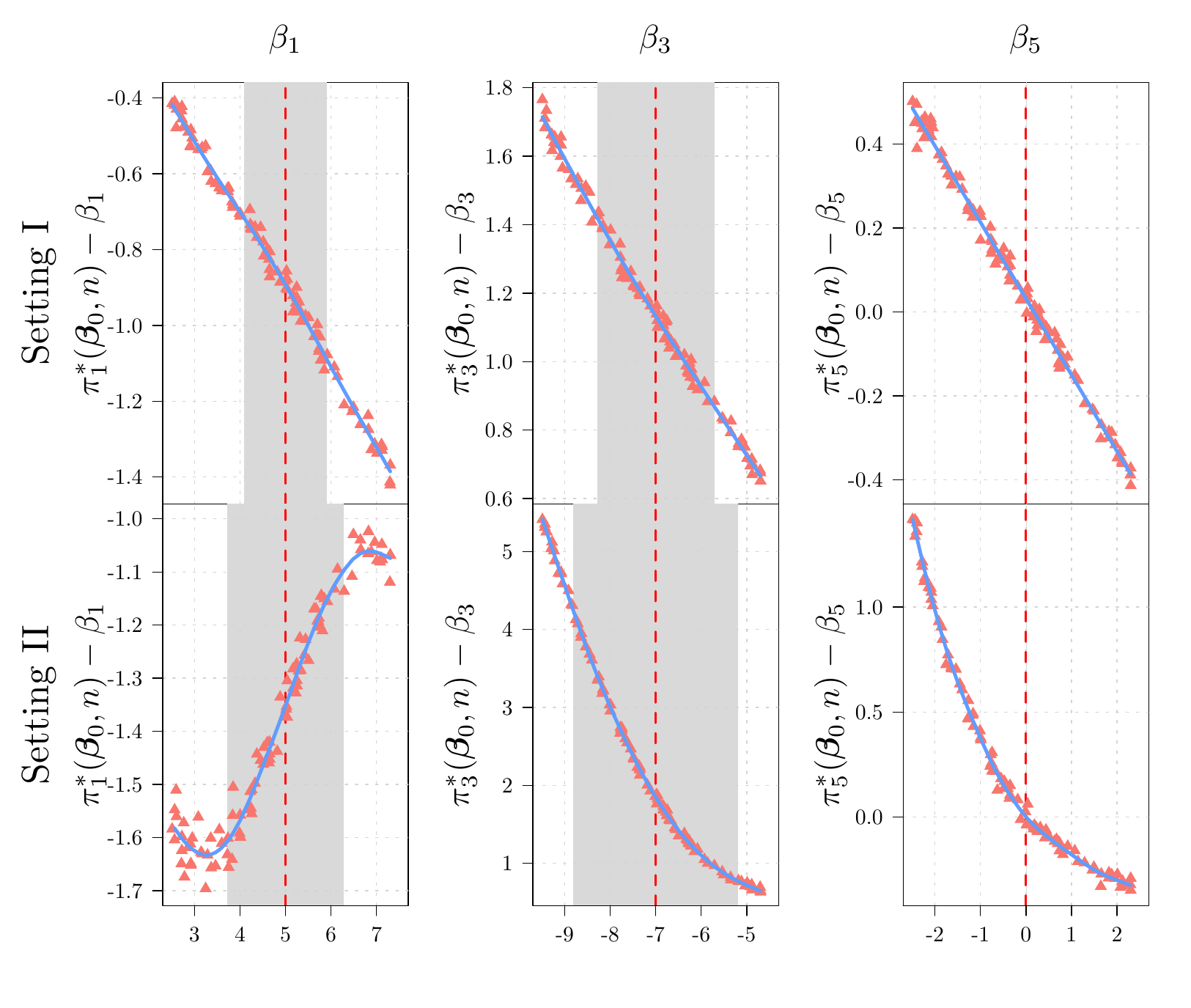}
     \caption{Simulation-based approximation of the bias function of the initial estimator (i.e. the classical MLE that ignores the misclassification) of the logistic regression with misclassified responses using the simulation settings presented in Table \ref{tab:sim-logistic-misscla}. The dashed line corresponds to the true parameter value used in the simulation and the rectangle region represents a plausible neighborhood of the true parameter value. Notice that the rectangle regions of $\beta_5$ in both settings are barely visible as there is approximately no empirical (absolute) bias of the initial estimator.}
     \label{fig:sero-bias}
\end{figure}

\clearpage
\section{ROC Curve Representation for the Simulation Section~\ref{sec:logistic-misscla}}
\label{supp:ROC}

\begin{figure}[hbt!]
    \centering
     \includegraphics[width=12cm, angle=0]{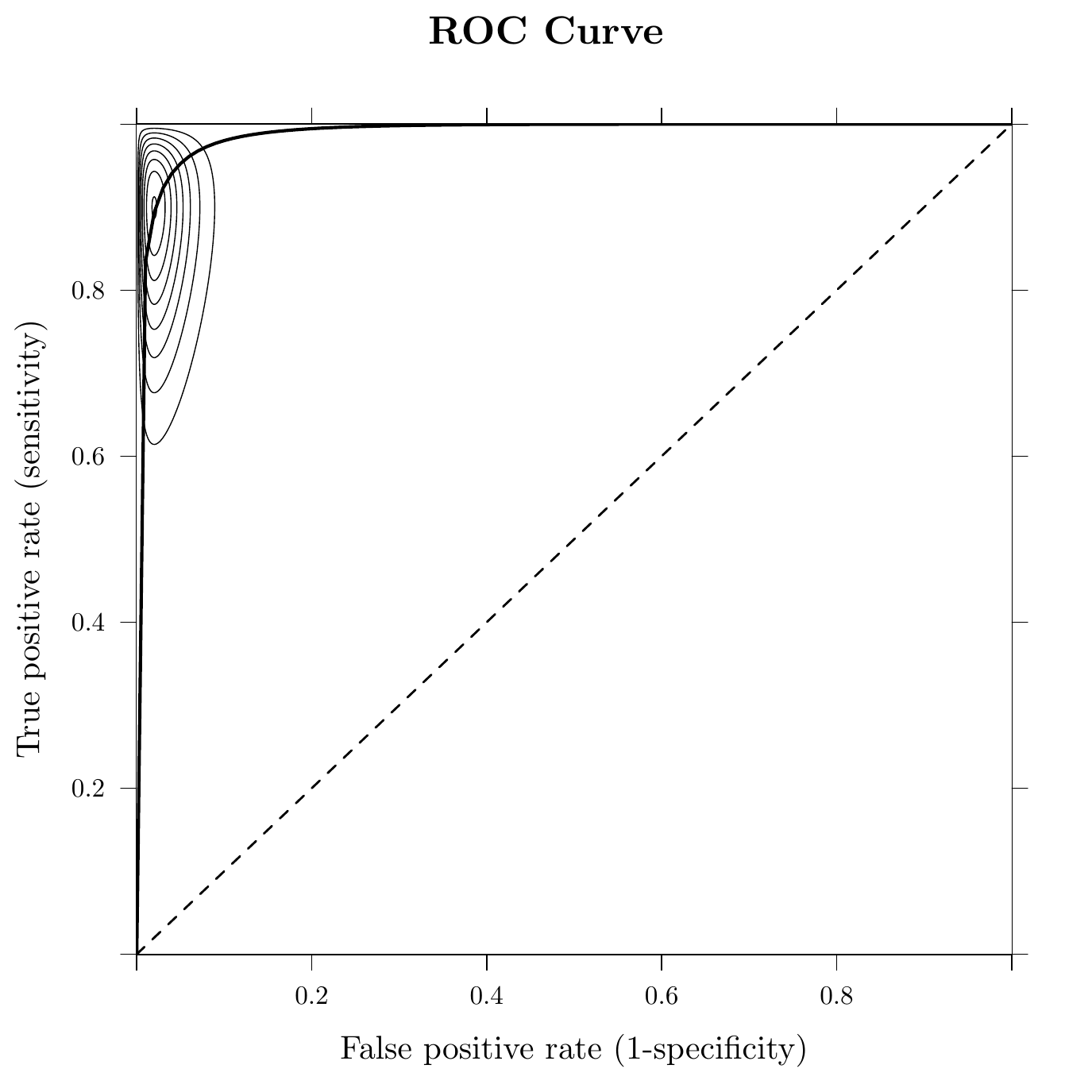}
     \caption{ROC curve for the logistic regression with misclassified responses using the simulation settings presented in Table~\ref{tab:sim-logistic-misscla}. The contour lines are the product of the densities of two beta distributions representing the false positive and true positive rates. The area under the curve is $99\%$.}
     \label{fig:roc}
\end{figure}

%





\end{document}